\newtheorem{lemma}{Lemma}[section]
\newtheorem{theorem}{Theorem}[section]
\newtheorem{proposition}{Proposition}[section]
\numberwithin{equation}{section}
\newcommand{\R}{\mathbb{R}}
\newcommand{\pa}{\partial}
\theoremstyle{plain}
\newif \ifLastSection \LastSectionfalse
\numberwithin{equation}{section}
\begin{document}

\title{{\bf {Convergence rate of solutions toward stationary
solutions to the isentropic micropolar fluid model
in a half line }}}

\author{Haiyan Yin\thanks{Corresponding author. School of Mathematical Sciences, Huaqiao University, Quanzhou
362021, P. R. China. Email: yinhaiyan2000@aliyun.com} }

\date{}

\maketitle

\begin{abstract}
   In this paper, we study the asymptotic behavior of solutions to
the initial boundary value problem for the one-dimensional compressible isentropic
 micropolar fluid model in a half line $\mathbb{R}_{+}:=(0,\infty).$ We mainly investigates the unique existence, the asymptotic stability and convergence rates of stationary solutions to the outflow problem for this model.
We obtain the convergence rates of global
solutions towards corresponding stationary solutions if the initial
perturbation belongs to the weighted Sobolev space.
The proof is based on the weighted energy method by
taking into account the effect of the microrotational velocity on the viscous compressible fluid.

\end{abstract}

\medskip

{\bf Key words.} isentropic micropolar fluid, stationary solutions,
 convergence rate, weighted energy method.

\medskip

{\bf AMS subject classifications.} 34K21, 35B35, 35Q35.

\section{Introduction}
The 1-D compressible viscous micropolar fluid model in the half line
$\R_{+}=:(0,+\infty)$ reads in Eulerian coordinates:
\begin{eqnarray}\label{NSP1*}
&&\left\{\begin{aligned}
& \rho_{t}+(\rho u)_{x}=0,\\
&(\rho u)_{t}+(\rho u^{2})_{x}+p(\rho)_x
=\lambda u_{xx},\\
&(\rho\omega)_{t}+(\rho u\omega)_{x}+\mu\omega
=\nu\omega_{xx}.
\end{aligned}\right.
\end{eqnarray}
Here the unknown functions $\rho$, $u$ and $\omega$ represent the density, the velocity and microrotational velocity, respectively. The pressure $p(\rho)=K \rho^{\gamma}$, with the adiabatic exponent $\gamma\geq 1$
and the gas constant $K>0$. The positive constants $\lambda$, $\mu$ and $\nu$ are the viscosities. The model of micropolar
fluid was first introduced by Eringen \cite{A. C. Eringen} in 1966. This model can be
used to describe the motions of a large variety of complex
fluids consisting of dipole elements such
as the suspensions, animal blood, liquid crystal, etc. For more physical background on this model,
we refer to \cite{A. C. Eringen2, G. Lukaszewicz}.

Much attention has been paid to the compressible micropolar
fluid model by many mathematicians in the last several decades.
For the isentropic case, Chen in \cite{Partial} investigated the global existence of strong solutions to the one-dimensional compressible micropolar fluid model with initial vacuum.
Later, Chen and his collaborators in \cite{Commun.} further studied
 the global weak solutions to the compressible micropolar fluid model with discontinuous initial data and vacuum. For three-dimensional compressible micropolar fluid model, the optimal decay rate in $L^{2}$ norm was studied by Liu and Zhang in \cite{ZhangLiu, Q.Q. Liu} with or without an external force. Later,
Wu and Wang \cite{wuzhigang} derived the pointwise estimates of the solution to the compressible micropolar fluid model and extended the optimal $L^{2}$ decay rate in
\cite{Q.Q. Liu} to the $L^{p}$ optimal decay rate with $p>1$. We also mention that there have
been many results on the incompressible micropolar
fluid system, see \cite{chen1, Nowakowsk, Dong} and the references
therein.

For the non-isentropic case, Mujakovi$\acute{c}$ first investigated the one-dimensional compressible micropolar
fluid model
 and obtained a series of results concerning the local-in-time existence, the
global existence and the regularity of solutions to an initial-boundary value problem with homogeneous
\cite{N. Mujakovi,global,regularity} and non-homogeneous \cite{local, Math.
Inequal, boundary,Nermina} boundary conditions.
The authors in \cite{L. Huang1,estimates05,Proceedings,Cauchy,stabilization} studied the large time
behavior of the solutions and the regularity of solutions to initial-boundary value problem and the Cauchy problem
of the one-dimensional compressible micropolar
fluid model. Duan in \cite{Duancal11, Duancal} investigated the global existence of strong solutions for the one-dimensional compressible micropolar fluids.
Chen and his collaborators in \cite{M.
Chen,J. Zhang} studied the blow up criterion of strong solutions to the
three-dimensional compressible micropolar fluid model.
 Dra$\check{z}$i$\acute{c}$ and Mujakovi$\acute{c}$ in \cite{spherical, spherical22} studied the regularity and large-time behavior of the spherical symmetry solutions for the three-dimensional compressible micropolar
fluid model.

Recently, there have been a series of mathematical results in the literature to study of the stability of wave patterns for the compressible nonisentropic micropolar fluid model: Liu and Yin \cite{Qcontac} for stability of contact discontinuity for the Cauchy problem; Jin and Duan \cite{JiJing} for the stability of rarefaction waves for the Cauchy problem; Yin \cite{stabstationary}
for stability of stationary solutions for the inflow problem; Cui and Yin \cite{Cui1} for stability of composite waves for the inflow problem;  Cui and Yin \cite{Cui2} for the stability and convergence rate of stationary solutions for the outflow problem. However, the articles mentioned above about the stability of waves are all based on the assumption of microrotation velocity $\omega=0$ for the
large time behavior. In this paper, we expect to study the asymptotic stability of
stationary solutions to the one-dimensional compressible isentropic micropolar fluid model
for the outflow problem without the assumption of $\omega=0$ for the
large time behavior.

Initial data for system \eqref{NSP1*} is given by
\begin{equation}
\label{dd-id}
(\rho,u,\omega)(x,0)=(\rho_{0},u_{0},\omega_{0})(x),\quad
\inf_{x\in \mathbb{R}_{+}}\rho_{0}(x)>0.
\end{equation}
We assume that the initial data at the far field $x=+\infty$ is
constant, namely
\begin{equation}
\label{dd-ff}
\lim\limits_{x\rightarrow+\infty}(\rho_{0},u_{0},\omega_{0})(x)=(\rho_{+},u_{+},\omega_{+}),
\end{equation}
In particular, to construct a classical solution of the micropolar fluid model
 $\eqref{NSP1*}_{3}$,  it is necessary to require that
\begin{eqnarray}\label{1.4(b)}
&\omega_{+}=0.
\end{eqnarray}

The boundary data for $u$ and $\omega$ at $x=0$ is
given by
\begin{equation}
\label{ddw1-bd} (u,\omega)(0,t)=(u_{b},\omega_{b}), \quad
\forall\, t\geq 0,
\end{equation}
where $u_{b}<0$, $\omega_{b}\neq 0$ are constants and the following
compatibility conditions hold
\begin{equation}
\label{compa} u_{0}(0)=u_{b},\quad \omega_{0}(0)=\omega_{b}.
\end{equation}

The assumption $u_{b}<0$ means that fluid blows out from the
boundary $x=0$ with the velocity $u_{b}$. Thus this
problem is called an outflow problem (see \cite{Matsumura1}). The
outflow boundary condition implies that the characteristic of the
hyperbolic equation $\eqref{NSP1*}_{1}$ for the density $\rho$ is
negative around the boundary so that boundary
conditions on $u$ and $\omega$ to parabolic
equations $\eqref{NSP1*}_{2}$ and
$\eqref{NSP1*}_{3}$ are necessary and sufficient for the wellposedness of this problem.

We guess that the large time behavior of solutions to the
initial boundary value problem \eqref{NSP1*}, \eqref{dd-id},
\eqref{dd-ff}, \eqref{ddw1-bd}, \eqref{compa} are the stationary
solutions to $\eqref{NSP1*}$
 independent of a time variable t
\begin{eqnarray}\label{1.6}
&&\left\{\begin{aligned}
& (\tilde{\rho} \tilde{u})_{x}=0,\\
& (\tilde{\rho}\tilde{u}^{2})_{x}+p(\tilde{\rho})_{x}=\lambda\tilde{u}_{xx},\\
& (\tilde{\rho} \tilde{u}\tilde{\omega})_{x}+\mu\tilde{\omega}
=\nu\tilde{\omega}_{xx},
\end{aligned}\right.
\end{eqnarray}
with the boundary data
\begin{eqnarray}\label{1.7}
&&\begin{aligned} &\inf_{x\in \mathbb{R}_{+}}\tilde\rho(x)>0,\ \
\lim_{x\rightarrow\infty}(\tilde\rho,\tilde u, \tilde
\omega)(x)=(\rho_{+},u_{+},0),\ \ \tilde u(0)=u_{b}<0, \ \ \ \tilde\omega(0)=\omega_{b}\neq 0.
\end{aligned}
\end{eqnarray}
Integrating  $\eqref{1.6}_{1}$  over $(x,\infty)$  for $x>0$ yields
\begin{eqnarray}\label{1.7(a)}
\tilde \rho(x)\tilde u(x)=\rho_{+}u_{+},
\end{eqnarray}
which implies by letting $x\rightarrow 0^{+}$,
\begin{eqnarray}\label{1.7(b)}
u_{+}=\frac{\tilde\rho(0)\tilde u(0)}{\rho_{+}}=\frac{\tilde\rho(0)u_{b}}{\rho_{+}}<0.
\end{eqnarray}

Now define \begin{eqnarray}\label{inequalityds21g6}
\tilde{\delta}=\max\left\{|\omega_{b}|,\ |u_{b}-u_{+}|\right\},\ \ \ \sigma=\min\left\{|r_{1}|,\ \xi_{0}\right\},
\end{eqnarray}
where constants $r_{1}$ and $\xi_{0}$ are defined in Section 2.

 Main results of the present paper are stated in the following theorems.
\begin{theorem}\label{stationary}
The boundary value problem \eqref{1.6}-\eqref{1.7} has a unique
smooth solution $(\tilde{\rho}, \tilde{u}, \tilde{\omega} )$  if and only if $M_{+}\geq1$
and $\chi_{c}u_{+}>u_{b}$.

(i) If $M_{+}>1$, then the solution
$(\tilde{\rho}, \tilde{u}, \tilde{\omega} )$ satisfies the estimates
\begin{eqnarray}\label{inequality21}
|\partial_{x}^{k}(\tilde{\rho}-\rho_{+},\tilde{u}-u_{+}, \tilde{\omega}-\omega_{+} )|\leq C\tilde{\delta} e^{-\sigma x},\ \  for\   k=0,1,2,\cdots,
\end{eqnarray}
where $C$ and $\sigma$ are positive constants.

(ii) If $M_{+}=1$, then the solution
$(\tilde{\rho}, \tilde{u}, \tilde{\omega} )$ satisfies the estimates
\begin{eqnarray}\label{inequality21g6}
|\partial_{x}^{k}(\tilde{\rho}-\rho_{+},\tilde{u}-u_{+} )|\leq C\frac{\tilde{\delta}^{k+1}}{\left(1+\tilde{\delta} x\right)^{k+1}},\ \ \
|\partial_{x}^{k}(\tilde{\omega}-\omega_{+} )|\leq C\tilde{\delta}e^{-\sigma x}
\ \  for\   k=0,1,2,\cdots,
\end{eqnarray}
where $C$ and $\sigma$ are positive constants.
\end{theorem}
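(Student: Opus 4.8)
The plan is to decouple the stationary system into ordinary differential equations and analyze each by phase‑line arguments. From $\eqref{1.6}_1$ and \eqref{1.7(a)} we have $\tilde\rho\tilde u\equiv\rho_+u_+=:j<0$ (recall $u_+<0$ by \eqref{1.7(b)}), so $\tilde\rho=j/\tilde u$. Since $(\tilde\rho\tilde u)_x=0$, the third equation of \eqref{1.6} collapses to the \emph{linear, constant–coefficient} ODE
\[
\nu\,\tilde\omega_{xx}-j\,\tilde\omega_x-\mu\,\tilde\omega=0,\qquad \tilde\omega(0)=\omega_b,\quad \tilde\omega(\infty)=0 ,
\]
whose characteristic roots $r_1<0<r_2$ satisfy $r_1r_2=-\mu/\nu<0$; discarding the growing mode forces the unique decaying solution $\tilde\omega(x)=\omega_b e^{r_1x}$, which exists for \emph{every} $\omega_b$ and obeys $|\partial_x^k(\tilde\omega-\omega_+)|=|\omega_b|\,|r_1|^k e^{r_1x}\le C\tilde\delta\,e^{-|r_1|x}$ for all $k$. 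This already supplies the $\tilde\omega$–part of both \eqref{inequality21} and \eqref{inequality21g6} and shows the microrotation imposes no solvability restriction; here $|r_1|$ is one of the two constants entering $\sigma$.

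It remains to treat the $(\tilde\rho,\tilde u)$–subsystem, which is exactly the outflow problem for the isentropic compressible Navier–Stokes equations. Integrating $\eqref{1.6}_2$ over $(x,\infty)$ and using $\tilde\rho\tilde u\equiv j$ and \eqref{1.7(b)} yields the first–order autonomous ODE
\[
\lambda\,\tilde u_x=f(\tilde u):=j(\tilde u-u_+)+p\!\left(\tfrac{j}{\tilde u}\right)-p(\rho_+),\qquad \tilde u(0)=u_b,\quad \tilde u(\infty)=u_+ ,
\]
with $\tilde\rho=j/\tilde u$ recovered afterwards. One checks $f(u_+)=0$ and
\[
f'(u_+)=\frac{\rho_+}{u_+}\bigl(u_+^2-p'(\rho_+)\bigr)=\frac{\rho_+}{u_+}\,p'(\rho_+)\,(M_+^2-1),\qquad f''(u_+)=\gamma(\gamma+1)\frac{K\rho_+^\gamma}{u_+^2}>0 .
\]
Because $u_+<0$, the equilibrium $u_+$ is linearly stable for the $x$–evolution when $M_+>1$, linearly unstable when $M_+<1$, and degenerate (quadratic, with $f>0$ on both sides of $u_+$) when $M_+=1$. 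A phase–line analysis on $\{\tilde u<0\}$ (equivalently $\{\tilde\rho>0\}$) then gives the "if and only if": when $M_+<1$ no orbit connects a boundary value $u_b$ to $u_+$ inside the physical region, while when $M_+\ge 1$ the unique trajectory landing at $u_+$ as $x\to\infty$ (its one–dimensional stable manifold, resp. its incoming side in the degenerate case) can be continued back and meets $\tilde u=u_b$ at a unique point—which we normalize to $x=0$—precisely when $u_b$ lies in the admissible range, namely $\chi_c u_+>u_b$; uniqueness of the whole solution follows from uniqueness of this trajectory and of the decaying mode of $\tilde\omega$, and smoothness is automatic since $f$ and $e^{r_1x}$ are smooth where $\tilde u<0$.

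For the decay rates: when $M_+>1$, the standard stable–manifold estimate for $\lambda\tilde u_x=f(\tilde u)$ gives $|\tilde u-u_+|\le C\tilde\delta\,e^{f'(u_+)x/\lambda}$, and differentiating the ODE repeatedly propagates the same exponential bound to every $\partial_x^k$; since $\tilde\rho-\rho_+=j(u_+-\tilde u)/(u_+\tilde u)$ is comparable to $\tilde u-u_+$, choosing $\sigma=\min\{|r_1|,\xi_0\}$ with $\xi_0$ a lower bound for $|f'(u_+)|/\lambda$ yields \eqref{inequality21}. When $M_+=1$, near $u_+$ the equation reads $\lambda\tilde u_x=\tfrac12 f''(u_+)(\tilde u-u_+)^2(1+o(1))$ with $f''(u_+)>0$, so a connecting orbit requires $\tilde u<u_+$; integrating and fixing the constant from $u_b-u_+=O(\tilde\delta)$ gives $|\tilde u-u_+|\le C\tilde\delta/(1+\tilde\delta x)$, and each differentiation of the ODE contributes one extra factor $(\tilde u-u_+)$, hence $|\partial_x^k(\tilde u-u_+)|\le C\tilde\delta^{\,k+1}/(1+\tilde\delta x)^{k+1}$, and likewise for $\tilde\rho-\rho_+$; combined with the exponential bound on $\tilde\omega$ this is \eqref{inequality21g6}.

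\emph{The main obstacle.} The delicate point is the sharp necessary‑and‑sufficient characterization for the $(\tilde\rho,\tilde u)$ part — in particular identifying the critical constant $\chi_c$ and showing both that $\chi_c u_+>u_b$ lets the backward orbit reach $x=0$ while staying in $\{\tilde u<0\}$ and that it is necessary — together with keeping all constants $C$ uniform in the smallness parameter $\tilde\delta$, which requires a careful rescaling, most notably in the degenerate regime $M_+=1$ where the decay is only algebraic. The $\tilde\omega$–analysis, being an explicit linear constant–coefficient ODE, is routine.
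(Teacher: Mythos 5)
Your proposal is correct and follows essentially the same route as the paper: decouple the stationary system into the linear constant--coefficient ODE for $\tilde\omega$ (solved explicitly as $\omega_b e^{r_1 x}$) and the first--order autonomous ODE for the fluid part obtained by integrating $\eqref{1.6}_2$, then perform the phase--line / stable--manifold analysis around $\tilde u = u_+$. The only cosmetic differences are that you work directly in the variable $\tilde u$ where the paper normalizes to $\tilde\chi = \tilde u/u_+$, and that you sketch the Navier--Stokes phase--line analysis (correctly, including the $f'(u_+)$, $f''(u_+)$ computations and the resulting exponential vs.\ algebraic rates) where the paper simply invokes the corresponding result of Kawashima--Nishibata--Zhu \cite{KNZhd}.
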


\begin{theorem}\label{1.2theorem}
Suppose that stationary solution
$(\tilde{\rho},\tilde{u},\tilde{\omega})$ exists.

(i)Assume that $M_{+}>1$ holds. For an arbitrary  positive constant
 $\vartheta$, there exist positive constants $\beta$ and
$\varepsilon_{1}$ such that if $(1+\beta
x)^{\frac{\vartheta}{2}}(\rho_{0}-\tilde{\rho}), (1+\beta
x)^{\frac{\vartheta}{2}}(u_{0}-\tilde{u}), (1+\beta
x)^{\frac{\vartheta}{2}}(\omega_{0}-\tilde{\omega})$ respectively
belongs to the Lebesgue space  $L^2(\mathbb{R}_{+})$ and
$\|[\rho_{0}-\tilde{\rho},u_{0}-\tilde{u},\omega_{0}-\tilde{\omega}]\|_{1}+\beta+\tilde{\delta}\leq
\varepsilon_{1},$
 then the initial boundary value problem
\eqref{NSP1*}, \eqref{dd-id}, \eqref{dd-ff}, \eqref{ddw1-bd},
\eqref{compa} has a unique solution $[\rho,u,\omega]$
verifying the decay estimate
\begin{equation}\label{1.thre1b}
\begin{aligned}
\|[\rho-\tilde{\rho},u-\tilde{u},\omega-\tilde{\omega}](t)\|_{\infty}
 \leq C(1+t)^{-\frac{\vartheta}{2}}.
\end{aligned}
\end{equation}

(ii)Assume that $M_{+}=1$ holds. There exists a positive constant
$\varepsilon_{2}$ such that if the initial data
satisfies $\|[(1+Bx)^{\frac{\vartheta}{2}}(\rho_{0}-\tilde{\rho}), (1+Bx)^{\frac{\vartheta}{2}}(u_{0}-\tilde{u}), (1+Bx)^{\frac{\vartheta}{2}}(\omega_{0}-\tilde{\omega})]\|_{1}
\leq \varepsilon_{2}$ for positive constants $B$ and $\vartheta$
satisfying $\vartheta\in[2,\vartheta^{*})$, where $B$ and
$\vartheta^{*}$ is respectively defined by  \eqref{stationary1} and
\begin{equation}\label{1.thre1b}
\begin{aligned}
\vartheta^{*}(\vartheta^{*}-2)=\frac{4}{\gamma+1},\ \ and \ \ \vartheta^{*}>0,
\end{aligned}
\end{equation}
then the initial boundary value problem
\eqref{NSP1*}, \eqref{dd-id}, \eqref{dd-ff}, \eqref{ddw1-bd},
\eqref{compa} has a unique solution $(\rho,u,\omega)$
verifying the decay estimate
\begin{equation}\label{1.thre1}
\begin{aligned}
\|[\rho-\tilde{\rho},u-\tilde{u},\omega-\tilde{\omega}](t)\|_{\infty}
 \leq C(1+t)^{-\frac{\vartheta}{4}}.
\end{aligned}
\end{equation}
\end{theorem}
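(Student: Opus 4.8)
The plan is to prove Theorem \ref{1.2theorem} by the weighted energy method combined with a time-weighted iteration, running the supersonic case $M_{+}>1$ and the degenerate sonic case $M_{+}=1$ in parallel but with different spatial weights. First I would pass to the perturbation
\[
[\phi,\psi,\zeta](x,t):=[\rho-\tilde{\rho},\,u-\tilde{u},\,\omega-\tilde{\omega}](x,t),
\]
and, subtracting the stationary system \eqref{1.6} from \eqref{NSP1*}, derive the perturbed system in schematic form
\[
\phi_t+u\phi_x+\rho\psi_x=-\psi\tilde{\rho}_x-\phi\tilde{u}_x+(\text{quadratic terms}),
\]
\[
\rho(\psi_t+u\psi_x)+p'(\rho)\phi_x-\lambda\psi_{xx}=F,\qquad
\rho(\zeta_t+u\zeta_x)+\mu\zeta-\nu\zeta_{xx}=G,
\]
where $F,G$ collect terms that are either quadratic in $[\phi,\psi,\zeta]$ or linear but carrying a factor $\tilde{\rho}_x,\tilde{u}_x,\tilde{u}_{xx},\tilde{\omega},\tilde{\omega}_x$, hence small and spatially localized by Theorem \ref{stationary}; the data become $[\psi,\zeta](0,t)=0$ and $[\phi,\psi,\zeta](x,0)=[\phi_0,\psi_0,\zeta_0](x)$. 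I would fix the weight $W(x)=(1+\beta x)^{\vartheta}$ when $M_{+}>1$ and $W(x)=(1+Bx)^{\vartheta}$ when $M_{+}=1$, work in the solution space of $[\phi,\psi,\zeta]\in C([0,T];H^1(\R_+))$ with $\psi_x,\zeta,\zeta_x\in L^2(0,T;H^1)$ and $\sqrt{W}\,[\phi,\psi,\zeta]\in C([0,T];L^2(\R_+))$, and set up the usual continuation argument: local existence — for which the outflow condition $u_b<0$ is precisely what makes the first-order equation for $\phi$ need no boundary datum and the problem well posed — together with a closed a priori estimate on $[0,T]$ gives the global solution.

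The heart of the matter is the a priori estimate. The zeroth-order step multiplies the $\phi$-equation by $\frac{p'(\rho)}{\rho}\phi$, the $\psi$-equation by $\psi$ and the $\zeta$-equation by $\zeta$, each times the weight $W(x)$, adds and integrates over $\R_+\times(0,t)$. After integration by parts this produces, up to controllable errors, $\frac{d}{dt}\int_{\R_+}W\,(\phi^2+\psi^2+\zeta^2)\,dx+\int_{\R_+}W\,(\psi_x^2+\zeta_x^2+\zeta^2)\,dx$ plus (i) a boundary contribution at $x=0$ proportional to $u_b\,(\phi(0,t)^2+\cdots)$ which, since $u_b<0$, has the right sign and is absorbed, and (ii) error terms carrying the factor $W'/W=\vartheta\beta/(1+\beta x)$ (resp.\ $\vartheta B/(1+Bx)$) and the stationary-profile factors, which by Theorem \ref{stationary} are $O(\tilde{\delta})$. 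For $M_{+}>1$ the smallness of $\beta$ and $\tilde{\delta}$ absorbs (ii) into the dissipation; for $M_{+}=1$ one cannot shrink $B$, and closing the estimate imposes precisely the constraint $\vartheta<\vartheta^{*}$ with $\vartheta^{*}(\vartheta^{*}-2)=4/(\gamma+1)$, which is what emerges when the leading $W'/W$ term is balanced against the convective/pressure structure of the algebraically decaying stationary profile. I would then differentiate the system once in $x$ and repeat to control $\|[\phi,\psi,\zeta](t)\|_{H^1}$ and its weighted analogue, estimating $\phi_{xx}$ through the momentum equation rather than directly, since $\phi$ enjoys no parabolic smoothing.

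Next comes the time-decay iteration. Writing $E_\vartheta(t)$ and $D_\vartheta(t)$ for the full weighted $H^1$ energy and dissipation, the estimates above give a differential inequality of the form $\frac{d}{dt}E_\vartheta+D_\vartheta\lesssim(\beta+\tilde{\delta})\,(\text{lower-weight quantities})$; by trading the spatial weight against the dissipation one obtains a finite hierarchy of inequalities for $(1+t)^{k}E_{\vartheta-k}(t)$, $k=0,1,\dots$, and summing them yields algebraic decay in $t$ of $\|[\phi,\psi,\zeta](t)\|_{L^2(\R_+)}$ together with a matching rate for $\|[\phi_x,\psi_x,\zeta_x](t)\|_{L^2(\R_+)}$. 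In the supersonic case the exponent is dictated by $\vartheta$; in the sonic case it is halved, reflecting the $(1+\tilde{\delta}x)^{-1}$ scaling of the degenerate stationary solution from part (ii) of Theorem \ref{stationary}. The $L^\infty$ assertions \eqref{1.thre1b} and \eqref{1.thre1} then follow from the one-dimensional interpolation inequality $\|f\|_{L^\infty(\R_+)}\le C\|f\|_{L^2(\R_+)}^{1/2}\|f_x\|_{L^2(\R_+)}^{1/2}$ applied to $[\phi,\psi,\zeta]$.

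The main obstacle will be the degenerate case $M_{+}=1$. There the stationary profile decays only like $(1+\tilde{\delta}x)^{-1}$, the spatial weight must be the fixed polynomial $(1+Bx)^{\vartheta}$ with $B$ pinned down by \eqref{stationary1} rather than free, and the cross terms $W'/W$ are no longer negligible; they must be compensated by the genuine but weak dissipative structure of the linearization along the sonic profile, which is exactly what forces the threshold $\vartheta^{*}$ and the reduced rate. A secondary difficulty, present in both cases, is the microrotation coupling: although $\mu\zeta-\nu\zeta_{xx}$ supplies strong damping for $\zeta$, the feedback of $\zeta$ into the $[\phi,\psi]$ subsystem through $\tilde{\omega}$ and $\tilde{u}_x$, and the boundary contribution generated by $\omega_b\neq0$, must be shown to be of higher order; this relies on the decay rates in Theorem \ref{stationary} and on the condition $\omega_{+}=0$.
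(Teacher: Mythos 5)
Your plan is essentially the paper's: perturb around the stationary profile, run a weighted $L^2$/$H^1$ energy estimate with the algebraic weight $(1+\beta x)^{\alpha}$ (resp.\ $(1+Bx)^{\alpha}$ in the sonic case), use the favorable sign of the boundary terms from $u_b<0$, absorb the profile-localized and microrotational errors via Theorem~\ref{stationary} and a Poincar\'e-type inequality, extract the constraint $\vartheta<\vartheta^{*}$ from the degenerate dissipation of the sonic profile via Lemma~\ref{lem.Vavbc}, and close by a time-weighted induction in $\alpha$ as in Kawashima--Matsumura/Nishikawa, finishing with $\|f\|_{L^\infty}\lesssim\|f\|^{1/2}\|f_x\|^{1/2}$.

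One place where your description is too loose to be a proof: ``differentiate the system once in $x$ and repeat'' fails verbatim for $\phi$, because multiplying the differentiated continuity equation by $\phi_x$ yields a transport identity with no dissipation of $\phi_x$; and the paper never needs $\phi_{xx}$, since it stays at the $H^1$ level. The actual device (Lemma~\ref{lem.V1bnmv} and its weighted analogue Lemma~\ref{lem.V12}) is a coupled functional: add $\lambda\,\partial_x\bigl(\eqref{NSP.pt}_1\bigr)\cdot\partial_x\varphi/\rho^2$ to $\eqref{NSP.pt}_2\cdot\partial_x\varphi/\rho$, which produces $\frac{d}{dt}\int\bigl(\psi\,\partial_x\varphi+\lambda(\partial_x\varphi)^2/2\rho^2\bigr)\,dx$ plus the dissipative term $\int p'(\tilde\rho)(\partial_x\varphi)^2/\rho\,dx$; the boundary term comes out with the sign of $|u_b|$ and the remaining pieces are absorbed. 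You also replace the exact relative entropy $\Phi(\rho,\tilde\rho)=\int_{\tilde\rho}^{\rho}\frac{p(s)-p(\tilde\rho)}{s^2}\,ds$ by the linearized multiplier $\tfrac{p'(\rho)}{\rho}\phi$; that variant can be made to work but carries extra quadratic commutator errors that the paper's choice avoids. Neither point changes the route, but the coupled functional is the step you would have to spell out to make the $\partial_x\varphi$ estimate close.
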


 If the microstructure of the fluid is not taken into account, that
is to say the effect of the microrotational velocity is omitted, i.e., $\omega=0$, then equations
\eqref{NSP1*} reduce to the classical Navier-Stokes equations.
 So far, there have been a great number of mathematical
studies about the outflow problem,  impermeable wall problem and
inflow problem for Navier-Stokes equations, please referring to
\cite{HQad,KNZhd,HfklgQadcd,HfgQad,NNY,M. Nishikawnj, Math.} and the
references therein. Under the assumption of microrotation velocity $\omega=0$ for the
large time behavior, there also have been a series of mathematical results about the above mentioned problems for the compressible nonisentropic micropolar fluid model, see \cite{stabstationary, Cui1, Cui2} and the references therein. Theorem \ref{stationary} shows
the unique existence of stationary solutions without the assumption of $\omega=0$ for the
large time behavior. We point out that the stationary equation \eqref{1.6} can be divided into two independent stationary equations. One is the equation \eqref{1.6cdss} which is also the stationary equation of the isentropic Navier-Stokes equation.
The other is the equation \eqref{1.6cv} about the microrotation velocity $\omega$ which is a second order homogeneous linear ordinary differential equation with constant coefficients.
The detailed process about stationary solutions can be seen in Section 2.
Theorem \ref{1.2theorem} shows that the time asymptotic
stability and convergence rates of stationary solutions for the compressible isentropic micropolar fluid model
under smallness assumptions on the boundary data and the initial
perturbation in the Sobolev space by employing the weighted energy
method, and particularly, the  microrotational velocity $\omega(x,t)$ has the nontrivial large-time behavior. Compared to the classical Navier-Stokes system without any force, the main difficulty in the proof for the micropolar fluid model is to treat the estimates on those terms related to the microrotational velocity $\omega(x,t)$ as mentioned above.
To the best of our knowledge, this is the first
work on the stability of stationary solutions for the compressible micropolar
fluid model without the assumption of $\omega=0$ for the
large time behavior.

\medskip

The rest of the paper is arranged as follows.  In
Section 2, we prove the existence of the stationary solution.
In the main part Section 3, we give the {\it a priori} estimates on the solutions of
the perturbative equations for the supersonic case $M_{+}>1$ and transonic case $M_{+}=1$, respectively.
The proof of Theorem \ref{1.2theorem} is concluded in Section 4.

\medskip

\textbf{Notation:} Throughout the paper, we denote positive
constants (generally large) and (generally small) independent of $t$
by $C$ and $c$, respectively. And the character ``$C$" and ``$c$"
may take different values in different places.
 $L^p
= L^p(\mathbb{R}_{+}) \ (1 \leq p \leq\infty)$ denotes
 the usual Lebesgue space on $[0,\infty)$ with its norm $ \|\cdot\|_ {L^p}$, and when $p=2$, we
write $ \| \cdot \| _{ L^2(\mathbb{R}_{+}) } = \| \cdot \|$.  $
H^s(\mathbb{R}_{+})$ denotes the usual $s$-th order Sobolev space
with its norm $ \| f \|_{ H^s(\mathbb{R}_{+})}=\| f \|_s =( \sum
\limits _{i=0}^s\|\partial^i f \| ^2)^{\frac{1}{2}}$.
 A norm with algebraic
weight is defined as follows:
$$\|f\|_{\alpha,\beta,i}:=\left(\int W_{\alpha,\beta}\sum_{j\leq
i}(\partial^{j}f)^{2}dx\right)^{\frac{1}{2}}, \ \ \ i,j\in
\mathbb{Z}, \ \ i,j\geq 0,$$
$$W_{\alpha,\beta}:=(1+\beta x)^{\alpha},  \ \ \  \alpha>0.$$
Note that this norm is equivalent to the norm defined by $\|(1+\beta
x)^{\frac{\alpha}{2}}f\|_{i}.$ The
 last subscript $i$ is often dropped for the case of $i=0$, i.e.
 $\|f\|_{\alpha,\beta}:=\|f\|_{\alpha,\beta,0}$.

\section{Existence of the stationary solution }
In this section, we will prove the existence of the stationary
solution to the boundary value problem \eqref{1.6}-\eqref{1.7},
which is stated in Theorem \ref{stationary}.
Notice that the stationary problem \eqref{1.6}-\eqref{1.7} can be divided into the following two independent stationary equations
\begin{eqnarray}\label{1.6cdss}
&&\left\{\begin{aligned}
& (\tilde{\rho} \tilde{u})_{x}=0,\\
& (\tilde{\rho}\tilde{u}^{2})_{x}+p(\tilde{\rho})_{x}=\lambda\tilde{u}_{xx},
\end{aligned}\right.
\end{eqnarray}
with the boundary data
\begin{eqnarray}\label{1.7a}
&&\begin{aligned} &\inf_{x\in \mathbb{R}_{+}}\tilde\rho(x)>0,\ \
\lim_{x\rightarrow\infty}(\tilde\rho,\tilde u)(x)=(\rho_{+},u_{+}),\ \ \tilde u(0)=u_{b}<0,
\end{aligned}
\end{eqnarray}
and
\begin{eqnarray}\label{1.6cv}
(\tilde{\rho} \tilde{u}\tilde{\omega})_{x}+\mu\tilde{\omega}
=\nu\tilde{\omega}_{xx}.
\end{eqnarray}
with the boundary data
\begin{eqnarray}\label{1.7b}
\lim_{x\rightarrow\infty}\tilde
\omega(x)=0,\ \ \ \  \tilde\omega(0)=\omega_{b}\neq 0.
\end{eqnarray}

Now we firstly prove the existence of the stationary
solution to the stationary problem \eqref{1.6cv}-\eqref{1.7b}.
From \eqref{1.6cv} and \eqref{1.7(a)}, we have
\begin{eqnarray}\label{1.1nm8}
\rho_{+}u_{+}\tilde{\omega}_{x}+\mu\tilde{\omega}
=\nu\tilde{\omega}_{xx},
\end{eqnarray}
which is  a second order homogeneous linear ordinary differential equation with constant coefficients. The characteristic equation of \eqref{1.1nm8} is given by
\begin{eqnarray}\label{1.1nm8dc}
\nu r^{2}-\rho_{+}u_{+}r-\mu
=0.
\end{eqnarray}
Then the roots of characteristic equation are
\begin{eqnarray}\label{1.1nm8dc}
r_{1}=\frac{\rho_{+}u_{+}-\sqrt{\rho_{+}^{2}u_{+}^{2}+4\nu\mu}}{2\nu}, \ \ \  \
r_{2}=\frac{\rho_{+}u_{+}+\sqrt{\rho_{+}^{2}u_{+}^{2}+4\nu\mu}}{2\nu}.
\end{eqnarray}
From \eqref{1.7(b)}, we deduce that $r_{1}<0$ and $r_{2}>0$.
Then from the theory of the homogeneous linear ordinary differential equation
([1]), we get the general solution of \eqref{1.1nm8} as follows
\begin{eqnarray}\label{1.1nm8bndc}
\tilde{\omega}(x)=C_{1}e^{r_{1}x}+C_{2}e^{r_{2}x},
\end{eqnarray}
where $C_{1}$ and $C_{2}$ are arbitrary constants.
Notice that $r_{1}<0$, $r_{2}>0$, and the boundary data
\eqref{1.7b}, arbitrary constants $C_{1}$ and $C_{2}$  can be
uniquely determined as follows
$C_{1}=\omega_{b}$ and $C_{2}=0$. Hence the particular solution of
\eqref{1.1nm8} with boundary condition \eqref{1.7b} is given by
\begin{eqnarray}\label{1.1vfnm8bndc}
\tilde{\omega}(x)=\omega_{b}e^{r_{1}x}.
\end{eqnarray}

 Here we should note
that the existence of the stationary problem
\eqref{1.6cdss}-\eqref{1.7a} has been proved by Kawashima, Nishibata and Zhu in \cite{KNZhd}.
 Here we summarize the results in \cite{KNZhd}.
We firstly define
\begin{eqnarray}\label{1.18}
\tilde{\chi}(x):=\frac{ \tilde u(x)}{u_{+}}=\frac{ \rho_{+}}{\tilde \rho(x)}>0.
\end{eqnarray}
Then integrating $\eqref{1.6cdss}_{2}$ over $(x,+\infty)$ and
substituting $\eqref{1.18}$ in the resulting equality, we get
\begin{eqnarray}\label{1.19}
\lambda u_{+}\tilde{\chi}_{x}=F(\tilde{\chi}),
\end{eqnarray}
where
\begin{eqnarray}\label{1.20}
F(\tilde{\chi}):=K\rho_{+}^{\gamma}(\tilde{\chi}^{-\gamma}-1)
+\rho_{+}u_{+}^{2}(\tilde{\chi}-1).
\end{eqnarray}
$\tilde{\chi}$ satisfies boundary conditions
\begin{eqnarray}\label{1.20po}
\tilde{\chi}(0)=\frac{ u_{b}}{u_{+}}>0,\ \ \ \lim_{x\rightarrow
+\infty}\tilde{\chi}(x)=1
\end{eqnarray}
which can be derived by  \eqref{1.7a} and \eqref{1.7(b)}.
We introduce Mach number $M_{+}$ at the far field
${x=+\infty}$: $M_{+}=\frac{|u_{+}|}{c_{+}},$ where
$c_{+}=\sqrt{p'(\rho_{+})}=\sqrt{K\gamma\rho_{+}^{\gamma-1}}$ is sound speed.

If $M_{+}>1$, the equation $F(\tilde{\chi})=0$ has the distinct two roots $\tilde{\chi}=1$ and $\tilde{\chi}=\chi_{c}$ satisfying $\chi_{c}<\tilde{\chi}(0)$.  If $M_{+}=1$, the equation $F(\tilde{\chi})=0$ admits only one root $\tilde{\chi}=\chi_{c}=1$.

\begin{lemma}\label{lem.Va}(See \cite{KNZhd})
The boundary value problem \eqref{1.19}-\eqref{1.20po} has a unique
smooth solution $\tilde{\chi}$ if and only if $M_{+}\geq1$
and $\chi_{c}<\tilde{\chi}(0)$. Moreover, if $\tilde{\chi}(0)\lessgtr 1$,
then $\tilde{\chi}_{x}\gtrless 0$.

(i) If $M_{+}>1$, then the solution
$\tilde{\chi}(x)$ satisfies the estimates
\begin{eqnarray}\label{inequality21}
|\partial_{x}^{k}(\tilde{\chi}(x)-1)|\leq C|\tilde{\chi}(0)-1|e^{-\xi_{0} x},\ \  for\   k=0,1,2,\cdots.
\end{eqnarray}
where $C$ and $\xi_{0}$ are positive constants.

(ii) If $M_{+}=1$, then the solution
$\tilde{\chi}$ is monotonically decreasing and satisfies the estimates
\begin{eqnarray}\label{inequality21g6nj}
|\partial_{x}^{k}(\tilde{\chi}(x)-1)|\leq C\frac{|\tilde{\chi}(0)-1|^{k+1}}{\left(1+|\tilde{\chi}(0)-1|x\right)^{k+1}},\ \  for\   k=0,1,2,\cdots.
\end{eqnarray}
\end{lemma}
Then we complete the proof of Theorem \ref{stationary} from  \eqref{1.1vfnm8bndc}, \eqref{1.18} and Lemma \ref{lem.Va}.

\section{Energy estimates}

To prove Theorem \ref{1.2theorem}, we use the energy method. Define
the perturbation as
$$
[\varphi, \psi, \zeta ](x,t)=[\rho-\tilde{\rho},u-\tilde{u},
\omega-\tilde{\omega}],
$$
then $[\varphi,\psi,\zeta](x,t)$ satisfies
\begin{eqnarray}\label{NSP.pt}
&&\left\{\begin{aligned}
&\pa_t\varphi+u\pa_x\varphi+\rho\pa_x\psi=-\pa_x\tilde{u}\varphi-\pa_x\tilde{\rho}\psi,\\
& \rho\left(\pa_t \psi+ u\pa_x\psi\right)+\pa_x
[p(\rho)-p(\tilde{\rho})]=\lambda\pa^{2}_x\psi-\pa_x\tilde{u}(\tilde{u}\varphi+\rho\psi),\\
&\rho(\pa_{t}\zeta+u\pa_{x}\zeta)+\mu\zeta
=\nu\pa_{x}^{2}\zeta-\pa_x\tilde{\omega}(\tilde{u}\varphi+\rho\psi),
\end{aligned}\right.
\end{eqnarray}
with initial data
\begin{eqnarray}\label{NSP.ptid}
\begin{array}{rl}
\left[\varphi,\psi,\zeta](x,0)\right. = &
\left.[\varphi_{0},\psi_{0},\zeta_{0}\right](x)=
[\rho_{0}(x)-\tilde{\rho}(x),u_{0}(x)-\tilde{u}(x),
\omega_{0}(x)-\tilde{\omega}(x)]
\end{array}
\end{eqnarray}
and boundary condition
\begin{equation}\label{NSfvP.ptid}
\psi(0,t)=\zeta(0,t)=0.
\end{equation}

 In the paper, to prove Theorem \ref{1.2theorem}, for
brevity we only devote ourselves to obtaining the global-in-time
{\it a priori} estimates in the following. We look for the solution
$[\varphi, \psi, \zeta](x,t)$ in the solution space $X([0,+\infty))$ which is defined as follows:
\begin{equation}\notag
\begin{split}
&X([0,T))=\bigg\{[\varphi, \psi,  \zeta]\big|[\varphi, \psi,  \zeta]\in C([0,T];H^1(\mathbb{R}_{+})),[\pa_{x}\varphi,\zeta]\in L^{2}([0,T];L^2(\mathbb{R}_{+})),\\
 &
\qquad\qquad\qquad\qquad\qquad\qquad\qquad\qquad\qquad\ \ \ \pa_{x}[\psi,\zeta]\in L^{2}([0,T];H^1(\mathbb{R}_{+}))\bigg\}
\end{split}
\end{equation}
for some $0<T\leq +\infty$. Lemma \ref{lem.V} plays an important role in the proof of the {\it a priori} estimates for supersonic case $M_{+}>1$ and transonic case $M_{+}=1$, respectively.

\begin{lemma}\label{lem.V}
(i) For any function  $h(\cdot, t)\in H^{1}(\R_{+}),$ there is a positive
constant C such that
\begin{eqnarray}\label{4.2}
\begin{aligned}
&\int_{\mathbb{R}_{+}}e^{-\sigma x}|h|^{2}dx \leq
C\left(h^{2}(0,t)+\|\partial_{x}h(t)\|^{2}\right).
\end{aligned}
\end{eqnarray}
\end{lemma}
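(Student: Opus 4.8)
The plan is to prove the weighted trace-type inequality $\int_{\mathbb{R}_{+}}e^{-\sigma x}|h|^{2}dx \leq C(h^{2}(0,t)+\|\partial_{x}h(t)\|^{2})$ by a direct integration-by-parts argument. First I would write $|h(x,t)|^{2}$ in terms of its boundary value and an integral of its derivative: for each fixed $x>0$,
\begin{equation}\notag
h(x,t)^{2}=h(0,t)^{2}+2\int_{0}^{x}h(y,t)\partial_{y}h(y,t)\,dy.
\end{equation}
Multiplying by $e^{-\sigma x}$ and integrating over $x\in(0,\infty)$, the first term contributes $\frac{1}{\sigma}h(0,t)^{2}$, which is already of the desired form. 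For the second term I would interchange the order of integration (Fubini, using $h\partial_{x}h\in L^{1}$ since $h\in H^{1}$) so that the $x$-integral $\int_{y}^{\infty}e^{-\sigma x}\,dx=\frac{1}{\sigma}e^{-\sigma y}$ can be computed explicitly, leaving
\begin{equation}\notag
2\int_{0}^{\infty}e^{-\sigma x}\int_{0}^{x}h(y,t)\partial_{y}h(y,t)\,dy\,dx=\frac{2}{\sigma}\int_{0}^{\infty}e^{-\sigma y}h(y,t)\partial_{y}h(y,t)\,dy.
\end{equation}

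Next I would estimate this remaining integral by Cauchy--Schwarz, bounding $e^{-\sigma y}\leq 1$ on the derivative factor while keeping the full weight on the $h$ factor, or more simply apply Young's inequality $2|h\,\partial_{y}h|\le \eta h^{2}+\eta^{-1}(\partial_{y}h)^{2}$ with the weight $e^{-\sigma y}$ distributed so as to produce a small multiple of $\int e^{-\sigma y}h^{2}\,dy$ plus a constant multiple of $\|\partial_{x}h\|^{2}$. Concretely, choosing $\eta=\sigma/2$ gives
\begin{equation}\notag
\frac{2}{\sigma}\int_{0}^{\infty}e^{-\sigma y}h\,\partial_{y}h\,dy\le \frac{1}{2}\int_{0}^{\infty}e^{-\sigma y}h^{2}\,dy+\frac{2}{\sigma^{2}}\int_{0}^{\infty}e^{-\sigma y}(\partial_{y}h)^{2}\,dy,
\end{equation}
and the last integral is bounded by $\frac{2}{\sigma^{2}}\|\partial_{x}h\|^{2}$.

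Finally I would absorb the term $\frac{1}{2}\int e^{-\sigma y}h^{2}\,dy$ into the left-hand side, yielding $\frac{1}{2}\int_{\mathbb{R}_{+}}e^{-\sigma x}h^{2}\,dx\le \frac{1}{\sigma}h(0,t)^{2}+\frac{2}{\sigma^{2}}\|\partial_{x}h\|^{2}$, from which the claim follows with $C=\max\{2/\sigma,\,4/\sigma^{2}\}$. The only mild subtlety — hardly an obstacle — is justifying that $h(x,t)\to 0$ as $x\to\infty$ along a subsequence, or more precisely that the fundamental theorem of calculus representation above is valid for $H^{1}(\mathbb{R}_{+})$ functions; this is standard since $H^{1}(\mathbb{R}_{+})\hookrightarrow C_{0}(\overline{\mathbb{R}_{+}})$, and the boundary trace $h(0,t)$ is well defined. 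I expect no genuine difficulty here; the estimate is a routine weighted Poincaré/trace inequality and the argument is short.
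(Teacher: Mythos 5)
Your proof is correct, but it takes a slightly different route than the paper. The paper's argument is a one-liner built on the pointwise Poincar\'{e}-type bound
\begin{equation}\notag
|h(x,t)|\leq |h(0,t)|+x^{\frac{1}{2}}\|\partial_{x}h\|,
\end{equation}
obtained directly from $h(x,t)=h(0,t)+\int_{0}^{x}\partial_{y}h\,dy$ and Cauchy--Schwarz; squaring gives $h(x,t)^{2}\leq 2h(0,t)^{2}+2x\|\partial_{x}h\|^{2}$, and multiplying by $e^{-\sigma x}$ and integrating immediately yields the claim because $\int_{0}^{\infty}e^{-\sigma x}\,dx$ and $\int_{0}^{\infty}x e^{-\sigma x}\,dx$ are finite, with no absorption step. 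You instead write an exact identity for $h^{2}$, use Fubini to collapse the double integral, and then invoke Young's inequality and absorb a term back into the left-hand side. Both are valid and both rely on the same essential idea --- representing $h$ via the fundamental theorem of calculus from the boundary and exploiting the integrability of the exponential weight --- but the paper's version is cleaner: it avoids Fubini and absorption, and the same pointwise inequality \eqref{p.ine} is reused elsewhere in the paper (e.g., in the proof of Lemma 3.1(ii)), so stating it once and applying it directly is the more economical presentation.
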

(ii)\  Let $k>1.$ For any function  $h(\cdot, t)\in H^{1}(\R_{+}),$ there is a positive
constant C such that
\begin{eqnarray}\label{4.2cd}
\begin{aligned}
\int_{\mathbb{R}_{+}}\frac{\tilde{\delta}^{k+1}}{\left(1+\tilde{\delta} x\right)^{k+1}}h^{2}dx
\leq C\tilde{\delta}^{k}h^{2}(0,t)+C\tilde{\delta}^{k-1}\|\pa_xh\|^{2}.
\end{aligned}
\end{eqnarray}
\begin{proof}
(i) \ \eqref{4.2} can be derived from the following Poincar\'{e}
type inequality:
\begin{equation}\label{p.ine}
\begin{split}
|h(x,t)|\leq |h(0,t)|+x^{\frac{1}{2}}\|\pa_xh\|.
\end{split}
\end{equation}

(ii)\ Letting $k>1$ and using Poincar\'{e}
type inequality \eqref{p.ine}, we
compute as
\begin{eqnarray*}\label{4.nbjn2}
\begin{aligned}[b]
\int_{\mathbb{R}_{+}}\frac{\tilde{\delta}^{k+1}}{\left(1+\tilde{\delta} x\right)^{k+1}}h^{2}dx
\leq\int_{\mathbb{R}_{+}}\frac{\tilde{\delta}^{k+1}}{\left(1+\tilde{\delta} x\right)^{k+1}}(h^{2}(0,t)+x\|\pa_xh\|^{2})dx
\leq C\tilde{\delta}^{k}h^{2}(0,t)+C\tilde{\delta}^{k-1}\|\pa_xh\|^{2}.
\end{aligned}
\end{eqnarray*}
\end{proof}

\subsection{The a priori
estimates for $M_{+}>1$}
The key to the proof of our main Theorem
$\ref{1.2theorem}$ (i) is to derive the uniform  {\it a priori}
estimates of solutions to the initial boundary value problem
\eqref{NSP.pt}, \eqref{NSP.ptid} and \eqref{NSfvP.ptid}. For the convenience of stating the a priori assumption, we use the notations
\begin{eqnarray}\label{3.2}
 \begin{aligned}[b]
N_{1}(T):=\sup\limits_{0\leq t\leq
T}\|(\varphi,\psi,\zeta)(t)\|_{1}
\end{aligned}
\end{eqnarray}

\begin{proposition}\label{priori.est}
 Assume  the same conditions as in Theorem $\mathrm{\ref{1.2theorem}}$(i) hold.
 Let $\vartheta$, $\beta$ and $\kappa$ be  positive constants.
Suppose $[\varphi,\psi,\zeta]\in X([0,T))$ is a solution to
\eqref{NSP.pt}, \eqref{NSP.ptid} and \eqref{NSfvP.ptid} which
satisfies $(1+ \beta x)^{\frac{\vartheta}{2}}\varphi,(1+\beta
x)^{\frac{\vartheta}{2}}\psi, (1+ \beta
x)^{\frac{\vartheta}{2}}\zeta\in C([0,T];L^2(\mathbb{R}_{+}))$  for a certain
positive constant $T$. For arbitrary $\alpha\in[0,\vartheta]$, there
exist positive constants $C$ and $\varepsilon_{1}$ independent of
$T$ such that if
$N_{1}(T)+\tilde{\delta}+\beta\leq
\varepsilon_{1}$
is satisfied, it holds for an arbitrary $t\in[0,T]$ that
\begin{eqnarray}\label{3.3x}
 \begin{aligned}[b]
&(1+t)^{\alpha+\kappa}\|[\varphi,\psi,\zeta](t)\|_{1}^{2}
+\int_{0}^{t}(1+
\tau)^{\alpha+\kappa}\|[\zeta,\partial_{x}[\psi,\zeta],\partial_{x}^{2}[\psi,\zeta]](\tau)\|^{2}d\tau
\\
\leq & C (1+
t)^{\kappa}\left(\|[\varphi_{0},\psi_{0},\zeta_{0}]\|_{1}^{2}+\|[\varphi_{0},
\psi_{0},\zeta_{0}]\|_{\vartheta,\beta}^{2}\right).
\end{aligned}
\end{eqnarray}
\end{proposition}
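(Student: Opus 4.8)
The plan is to establish \eqref{3.3x} by a weighted energy method combined with a temporal-decay induction. First I would derive the basic (unweighted) $H^1$ estimate: multiply $\eqref{NSP.pt}_1$ by a suitable multiple of $\varphi$ (more precisely, by $p'(\rho)\varphi/\rho$ or use the energy form $\rho^{-1}\int(p(\rho)-p(\tilde\rho)-p'(\tilde\rho)(\rho-\tilde\rho))$ to get a positive-definite density functional), $\eqref{NSP.pt}_2$ by $\psi$, and $\eqref{NSP.pt}_3$ by $\zeta$, then integrate over $\R_+$ and in time. The boundary condition \eqref{NSfvP.ptid} kills the boundary terms coming from $\lambda\pa_x^2\psi$ and $\nu\pa_x^2\zeta$ except for the term $u(0,t)=u_b<0$ appearing in the density flux at $x=0$; since $u_b<0$ this boundary term has a good sign (this is the structural reason the outflow problem is well posed) and can be discarded or used. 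The terms involving $\pa_x\tilde u$, $\pa_x\tilde\rho$, $\pa_x\tilde\omega$ on the right-hand side are controlled using the exponential decay \eqref{inequality21} from Theorem \ref{stationary}(i) together with Lemma \ref{lem.V}(i): each such term is bounded by $C\tilde\delta\int e^{-\sigma x}(\cdot)^2\,dx \le C\tilde\delta(\|\pa_x(\cdot)\|^2 + (\cdot)^2(0,t))$, and the boundary values of $\varphi$ are handled by the first equation restricted to $x=0$ while $\psi(0,t)=\zeta(0,t)=0$. For the $\mu\zeta$ term in $\eqref{NSP.pt}_3$ we get a free good term $\mu\|\zeta\|^2$ on the left — this is the main new favorable structure from the micropolar model, and it is what lets $\|\zeta\|^2$ (not just $\|\pa_x\zeta\|^2$) appear in the dissipation in \eqref{3.3x}. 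Then I would differentiate the system once in $x$, repeat the energy computation to pick up $\|\pa_x[\varphi,\psi,\zeta]\|^2$ on the left and $\|\pa_x^2[\psi,\zeta]\|^2$ in the dissipation; the nonlinear terms (products of the perturbation with its derivatives, e.g. $u\pa_x\varphi$, $\psi\pa_x\tilde u$, etc.) are absorbed using the smallness $N_1(T)+\tilde\delta+\beta\le\varepsilon_1$ together with Sobolev embedding $\|f\|_\infty\le C\|f\|_1$ on $\R_+$.

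Next I would run the weighted estimate: multiply the same combinations of equations by $W_{\alpha,\beta}=(1+\beta x)^\alpha$ times the respective perturbation and its $x$-derivative. The key algebraic fact is that $\pa_x W_{\alpha,\beta} = \alpha\beta(1+\beta x)^{\alpha-1} = \frac{\alpha\beta}{1+\beta x}W_{\alpha,\beta}$, so the weight commutes with differentiation up to a lower-order factor carrying the small parameter $\beta$; hence all commutator terms produced by moving the weight across $\pa_x$ come with an extra factor $\beta$ and are absorbed into the good dissipation terms once $\varepsilon_1$ is small. The convection term $u\pa_x\varphi$ integrated against $W_{\alpha,\beta}\varphi$ produces, after integration by parts, a term $-\tfrac12\int (u\pa_x W_{\alpha,\beta} + (\pa_x u)W_{\alpha,\beta})\varphi^2$; since $u\approx u_+<0$ near infinity and $\pa_x W>0$, the dominant piece $-\tfrac12 u\,\pa_x W\,\varphi^2 \ge c\,\beta W_{\alpha-1,\beta}\varphi^2$ has a favorable sign and in fact furnishes extra weighted dissipation — this is exactly the mechanism by which the outflow condition $u_b<0$ upgrades the decay. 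The source terms carrying $\pa_x\tilde u$ etc. are again exponentially localized, so inserting $W_{\alpha,\beta}$ costs only a bounded factor and they are handled as before.

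Finally I would close the argument by the standard time-weighted iteration of Matsumura–Nishihara–Kawashima type. Denote by $E_\alpha(t)$ the weighted energy $\|[\varphi,\psi,\zeta](t)\|_1^2 + \|[\varphi,\psi,\zeta](t)\|_{\alpha,\beta,1}^2$ and by $D_\alpha(t)$ the corresponding dissipation including $\|[\zeta,\pa_x[\psi,\zeta],\pa_x^2[\psi,\zeta]]\|^2$ and the extra weighted terms $\beta\|[\varphi,\psi,\zeta]\|_{\alpha-1,\beta}^2$. The weighted inequalities give $\frac{d}{dt}E_\alpha + c D_\alpha \le 0$ (after absorbing nonlinearities), and crucially the weighted dissipation at level $\alpha-1$ bounds the un-weighted energy at level $\alpha$ interpolated against the weight — this is the interpolation lemma that trades one power of the algebraic weight for one power of $(1+t)$. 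Multiplying by $(1+t)^{\alpha+\kappa}$, integrating in time, and inducting on $\alpha\in[0,\vartheta]$ in integer (or half-integer) steps — at each step using the previously obtained estimate at weight level $\alpha-1$ to control the term $(1+t)^{\alpha+\kappa-1}E_{\alpha}$ produced by the derivative of $(1+t)^{\alpha+\kappa}$ — yields \eqref{3.3x}; the base case $\alpha=0$ is the plain $H^1$ estimate above and the factor $(1+t)^\kappa$ on the right is the leftover from the induction. The main obstacle is bookkeeping the interplay between the three small parameters $N_1(T)$, $\tilde\delta$, $\beta$ and making sure every commutator and every nonlinear term is genuinely absorbable (in particular that the $\omega$-coupling terms $\pa_x\tilde\omega(\tilde u\varphi+\rho\psi)$ in $\eqref{NSP.pt}_2$ and $\eqref{NSP.pt}_3$, which are new relative to the Navier–Stokes case, are controlled — here one uses $|\pa_x\tilde\omega|\le C\tilde\delta e^{-\sigma x}$ and Lemma \ref{lem.V}(i), so they cost an extra $\tilde\delta$ and slot into the smallness); the sign of the boundary terms at $x=0$ and the sign of $-u\,\pa_x W$ are what make the whole scheme go through, and verifying those signs carefully is the conceptual heart of the proof.
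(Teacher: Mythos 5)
Your plan follows the paper's actual strategy very closely: construct the relative‑entropy functional, multiply by the weight $W_{\alpha,\beta}$, exploit the good boundary sign from $u_b<0$ and the free zeroth‑order dissipation $\mu\|\zeta\|^2$ produced by the micro‑rotation damping, localize the stationary source terms with Theorem \ref{stationary}(i) and Lemma \ref{lem.V}(i), add two higher‑order estimates for $\partial_x\varphi$ and $\partial_x[\psi,\zeta]$, and then run the Kawashima--Matsumura/Nishikawa time‑weighted induction. That is the same architecture as Lemmas \ref{lem.V1bnm}--\ref{lem.V1bnmvhj} and the summation argument in the paper.

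The one place where your sketch is genuinely wrong, not merely terse, is the sign of the weighted transport term. You attribute the favorable sign of the $\beta$‑weighted flux entirely to $u\approx u_+<0$, as if it were a scalar convection argument that yields $-\tfrac12 u\,\partial_x W\,\varphi^2\geq 0$. But after multiplying the full energy identity \eqref{basic*} by $W_{\alpha,\beta}$ and integrating, the term whose sign you need is
\begin{equation*}
-\alpha\beta\int_{\R_+} W_{\alpha-1,\beta}\Bigl[\rho u\,\Phi(\rho,\tilde\rho)+\tfrac{\rho u}{2}\psi^2+\tfrac{\rho u}{2}\zeta^2+(p(\rho)-p(\tilde\rho))\psi\Bigr]\,dx,
\end{equation*}
and the cross term $(p(\rho)-p(\tilde\rho))\psi\approx p'(\rho_+)\varphi\psi$ has indefinite sign. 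After linearization the bracket is (up to a harmless $\zeta$‑piece, which is unambiguously good) a quadratic form $[\varphi,\psi]M_1[\varphi,\psi]^T$ with
\begin{equation*}
M_1=\begin{pmatrix}-\tfrac{\gamma K u_+}{2}\rho_+^{\gamma-2} & -\tfrac{K\gamma\rho_+^{\gamma-1}}{2}\\[2pt] -\tfrac{K\gamma\rho_+^{\gamma-1}}{2} & -\tfrac{\rho_+ u_+}{2}\end{pmatrix},
\end{equation*}
whose diagonal is positive from $u_+<0$, but whose determinant equals $\tfrac{\gamma K}{4}\rho_+^{\gamma-1}\bigl(u_+^2-\gamma K\rho_+^{\gamma-1}\bigr)$ and is positive \emph{only} because $M_+>1$. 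So the outflow condition alone does not give the coercivity you claim; you also need the supersonic hypothesis at this precise step to defeat the $\varphi\psi$ cross term. Since $M_+>1$ is in the hypotheses, this is a gap in the argument rather than in the assumptions, but it is exactly the ``conceptual heart'' you flagged and your proposal as written would not close it.

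Two smaller remarks. First, for the $\partial_x\varphi$ estimate the paper does not simply ``differentiate the system in $x$ and redo the energy estimate'': it pairs the differentiated continuity equation with $\eqref{NSP.pt}_2$ tested against $\partial_x\varphi/\rho$ (a viscous‑flux type multiplier) to produce $\|\partial_x\varphi\|^2$ with good sign from $p'(\tilde\rho)/\rho$, and it picks up a good boundary contribution $-\tfrac{\lambda|u_b|}{2\rho(0,t)^2}(\partial_x\varphi)^2(0,t)$. Second, your phrase ``interpolation lemma that trades a weight power for a time power'' is a fair informal description of the induction, but the actual mechanism is that the extra weighted dissipation $\beta\|[\varphi,\psi,\zeta]\|_{\alpha-1,\beta}^2$ furnished by the $M_1$‑positivity is what absorbs the $\xi(1+\tau)^{\xi-1}\|\cdot\|_{\alpha,\beta}^2$ term; there is no separate interpolation lemma being invoked.
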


\begin{lemma}\label{lem.V1bnm}
There exists a positive constant $\varepsilon_{1}$ such that if
$N_{1}(T)+\tilde{\delta}+\beta\leq\varepsilon_{1}$, then
\begin{eqnarray}\label{1m}
\begin{aligned}[b]
  &(1+
t)^{\xi}\|[\varphi,\psi,\zeta](t)\|_{\alpha,\beta}^{2}+\beta\int_{0}^{t}(1+
\tau)^{\xi}
\|[\varphi,\psi,\zeta](\tau)\|_{\alpha-1,\beta}^{2}d\tau\\&+\int_{0}^{t}(1+
\tau)^{\xi}\|[\zeta,\partial_{x}[\psi,\zeta]](\tau)\|_{\alpha,\beta}^{2}d\tau+\int_{0}^{t}(1+
\tau)^{\xi}\varphi^{2}(0,\tau)d\tau\\
 \leq &C\|[\varphi_{0},\psi_{0},\zeta_{0}]\|_{\vartheta,\beta}^{2} +\xi\int_{0}^{t}(1+
\tau)^{\xi-1}\|[\varphi,\psi,\zeta](\tau)\|_{\alpha,\beta}^{2}d\tau
+C\tilde{\delta}\int_{0}^{t}(1+
\tau)^{\xi}\|\partial_{x}\varphi\|^{2}(\tau)d\tau
\end{aligned}
\end{eqnarray}
holds for $\alpha \in [0,\vartheta]$ and $\xi\geq0$.
\end{lemma}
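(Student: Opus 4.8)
The plan is to prove \eqref{1m} by a weighted energy estimate carried out directly on the perturbation system \eqref{NSP.pt}. First I would introduce the energy density $\mathcal{E}:=\frac{\rho}{2}\psi^{2}+\frac{\rho}{2}\zeta^{2}+\Psi(\rho,\tilde{\rho})$, where $\Psi(\rho,\tilde{\rho}):=\rho\int_{\tilde{\rho}}^{\rho}\frac{p(s)-p(\tilde{\rho})}{s^{2}}\,ds$ is the relative potential energy; since $\tilde{\rho}$ is bounded above and away from $0$ and $\Psi(\rho,\tilde{\rho})$ is comparable to $\varphi^{2}$ for $\varphi$ small, $\mathcal{E}$ is equivalent to $\varphi^{2}+\psi^{2}+\zeta^{2}$. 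Multiplying $\eqref{NSP.pt}_{2}$ by $\psi$ and $\eqref{NSP.pt}_{3}$ by $\zeta$, then combining the pressure term with $\partial_{t}\Psi$ (rewritten through $\eqref{NSP.pt}_{1}$) so that the pressure work becomes a spatial flux, and using the conservation form $\partial_{t}\rho+\partial_{x}(\rho u)=0$ of the exact continuity equation to cancel the convective terms, one arrives at a local identity of the schematic form $\partial_{t}\mathcal{E}+\partial_{x}G+\lambda\psi_{x}^{2}+\nu\zeta_{x}^{2}+\mu\zeta^{2}=R$, in which $G$ is the energy flux (transport of $\mathcal{E}$ by $u$, pressure work, viscous work) and $R$ collects every term carrying a factor $\partial_{x}\tilde{\rho}$, $\partial_{x}\tilde{u}$ or $\partial_{x}\tilde{\omega}$. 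Because $M_{+}>1$, Theorem~\ref{stationary}(i) gives the exponential bound $|R|\leq C\tilde{\delta}\,e^{-\sigma x}(\varphi^{2}+\psi^{2}+\zeta^{2})$.

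Next I would multiply this identity by $(1+t)^{\xi}W_{\alpha,\beta}(x)=(1+t)^{\xi}(1+\beta x)^{\alpha}$ and integrate over $\R_{+}\times(0,t)$. The time-derivative term yields $(1+t)^{\xi}\|[\varphi,\psi,\zeta](t)\|_{\alpha,\beta}^{2}$ at the upper endpoint, the initial contribution $\|[\varphi_{0},\psi_{0},\zeta_{0}]\|_{\alpha,\beta}^{2}\leq\|[\varphi_{0},\psi_{0},\zeta_{0}]\|_{\vartheta,\beta}^{2}$ (using $\alpha\in[0,\vartheta]$), and, from differentiating $(1+t)^{\xi}$, the term $\xi\int_{0}^{t}(1+\tau)^{\xi-1}\|[\varphi,\psi,\zeta](\tau)\|_{\alpha,\beta}^{2}\,d\tau$, which I keep on the right-hand side (it is the quantity later absorbed by the induction on $\xi$). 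Integrating $\partial_{x}G$ by parts, the far-field boundary terms vanish; at $x=0$, since $\psi(0,\tau)=\zeta(0,\tau)=0$ by \eqref{NSfvP.ptid}, the only surviving contribution is $-u_{b}\Psi(\rho(0,\tau),\tilde{\rho}(0))=|u_{b}|\,\Psi(\rho(0,\tau),\tilde{\rho}(0))\geq c\,\varphi^{2}(0,\tau)$, the favorable outflow boundary term that accounts for $\int_{0}^{t}(1+\tau)^{\xi}\varphi^{2}(0,\tau)\,d\tau$ on the left. The principal part of the remaining interior term is $-\int W_{x}\,u\,\mathcal{E}\,dx$; since $W_{x}=\alpha\beta(1+\beta x)^{\alpha-1}\geq0$ and $-u=-\tilde{u}-\psi$ is bounded below by a positive constant once $N_{1}(T)$ is small, this produces the good term $c\beta\|[\varphi,\psi,\zeta]\|_{\alpha-1,\beta}^{2}$, while $\lambda\psi_{x}^{2}+\nu\zeta_{x}^{2}+\mu\zeta^{2}$ integrates to the dissipation $\|[\zeta,\partial_{x}[\psi,\zeta]]\|_{\alpha,\beta}^{2}$.

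It then remains to absorb the error terms into these four good quantities. The viscous-flux cross terms $-\lambda\int W_{x}\psi\psi_{x}\,dx$ and $-\nu\int W_{x}\zeta\zeta_{x}\,dx$ are controlled by $\eta\int W(\psi_{x}^{2}+\zeta_{x}^{2})\,dx+C\beta^{2}\int(1+\beta x)^{\alpha-1}(\psi^{2}+\zeta^{2})\,dx$ (using $W_{x}^{2}/W\leq\alpha^{2}\beta^{2}(1+\beta x)^{\alpha-1}$), hence are absorbed by the dissipation and by the $\beta$-term once $\eta$ and $\beta$ are small; the pressure-flux cross term $-\int W_{x}\psi\big(p(\rho)-p(\tilde{\rho})\big)\,dx$, of size $O(\beta)\int(1+\beta x)^{\alpha-1}|\varphi||\psi|\,dx$, is split by Cauchy--Schwarz against the $\varphi$- and $\psi$-contributions of $c\beta\|[\varphi,\psi,\zeta]\|_{\alpha-1,\beta}^{2}$; the cubic remainders (such as $-\int W_{x}\psi\mathcal{E}\,dx$ and the $O(\varphi^{2})$ part of the pressure) carry a factor $\|[\varphi,\psi,\zeta]\|_{L^{\infty}}\leq CN_{1}(T)$ and are absorbed likewise. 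For the stationary source term, $(1+\beta x)^{\alpha}e^{-\sigma x}\leq Ce^{-\sigma x/2}$ (valid since $\alpha\beta$ is small) gives $\int_{0}^{t}(1+\tau)^{\xi}\!\int W|R|\,dx\,d\tau\leq C\tilde{\delta}\int_{0}^{t}(1+\tau)^{\xi}\!\int e^{-\sigma x/2}(\varphi^{2}+\psi^{2}+\zeta^{2})\,dx\,d\tau$, and applying the Poincar\'{e}-type Lemma~\ref{lem.V}(i) to $\varphi,\psi,\zeta$ together with $\psi(0,\tau)=\zeta(0,\tau)=0$ bounds this by $C\tilde{\delta}\int_{0}^{t}(1+\tau)^{\xi}\big(\varphi^{2}(0,\tau)+\|\partial_{x}\varphi\|^{2}+\|\partial_{x}[\psi,\zeta]\|^{2}\big)\,d\tau$; the $\varphi^{2}(0,\tau)$ and $\|\partial_{x}[\psi,\zeta]\|^{2}$ parts are absorbed for $\tilde{\delta}$ small, whereas the $\|\partial_{x}\varphi\|^{2}$ part has no zeroth-order counterpart and must be retained, producing exactly the last term $C\tilde{\delta}\int_{0}^{t}(1+\tau)^{\xi}\|\partial_{x}\varphi\|^{2}(\tau)\,d\tau$ on the right of \eqref{1m}. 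Collecting all contributions yields \eqref{1m}.

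The main obstacle is the bookkeeping in the last step: one must organize the flux $G$ and the error $R$ so that every weight-gradient cross term and every nonlinear remainder is dominated by the boundary term $\varphi^{2}(0,\tau)$, the weighted term $\beta\|[\varphi,\psi,\zeta]\|_{\alpha-1,\beta}^{2}$ and the dissipation, which is precisely what forces the smallness condition $N_{1}(T)+\tilde{\delta}+\beta\leq\varepsilon_{1}$; the supersonic hypothesis enters here through the exponential decay of the stationary profile in Theorem~\ref{stationary}(i), which is what makes $R$ amenable to Lemma~\ref{lem.V}(i) (the transonic case, with only algebraic decay, requires the separate argument based on \eqref{4.2cd}). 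The only quantity that cannot be closed at this level is the $\tilde{\delta}$-small multiple of $\|\partial_{x}\varphi\|^{2}$; leaving it on the right is harmless, since $\|\partial_{x}\varphi\|^{2}$ is recovered with a favorable sign, again up to a factor $\tilde{\delta}$, in the subsequent higher-order weighted estimates.
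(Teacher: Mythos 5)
Your overall strategy coincides with the paper's: same relative energy $\rho\Phi(\rho,\tilde\rho)+\tfrac{\rho}{2}\psi^{2}+\tfrac{\rho}{2}\zeta^{2}$, same local identity \eqref{basic*}, multiplication by $(1+t)^{\xi}W_{\alpha,\beta}$, the favorable outflow boundary term $-\rho(0,t)u_{b}\Phi\ge c\varphi^{2}(0,t)$, treatment of the stationary source through Lemma~\ref{lem.V}(i), and retention of $C\tilde\delta\int(1+\tau)^{\xi}\|\partial_{x}\varphi\|^{2}d\tau$ because the density gradient has no dissipative counterpart at this level.

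There is, however, one genuine conceptual gap. You present the absorption of the pressure-flux cross term $-\int W_{x}\psi\big(p(\rho)-p(\tilde\rho)\big)\,dx$ into $-\int W_{x}u\,\mathcal{E}\,dx$ as an automatic Cauchy--Schwarz step, and then assert that ``the supersonic hypothesis enters here through the exponential decay of the stationary profile.'' That is not where the supersonic hypothesis chiefly acts. The cross term $-\int W_{x}\psi(p-p(\tilde\rho))\,dx$ is of the \emph{same} quadratic order as the weight-gradient term $-\int W_{x}u\,\mathcal{E}\,dx$ (both are $O(\beta)\int W_{\alpha-1,\beta}(\varphi^{2}+\psi^{2})\,dx$), so ``Cauchy--Schwarz'' only closes if the combined quadratic form
\begin{equation*}
-u\mathcal{E}-(p(\rho)-p(\tilde\rho))\psi
\sim
-\frac{\gamma K u_{+}}{2}\rho_{+}^{\gamma-2}\varphi^{2}
-K\gamma\rho_{+}^{\gamma-1}\varphi\psi
-\frac{\rho_{+}u_{+}}{2}\psi^{2}
\end{equation*}
is positive definite. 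Running the Young inequality with parameter $a$, one needs simultaneously $a<|u_{+}|/\rho_{+}$ and $a>K\gamma\rho_{+}^{\gamma-2}/|u_{+}|$; such an $a$ exists exactly when $u_{+}^{2}>\gamma K\rho_{+}^{\gamma-1}$, i.e.\ $M_{+}>1$. This is precisely the leading-principal-minor condition $\Delta_{22}>0$ that the paper states explicitly for the matrix $M_{1}$. So the supersonic hypothesis is used twice: once here (making the $\beta$-weighted term sign-definite), and once in the exponential decay bound for the source $R$. Your write-up only records the second use, which misrepresents the structure of the proof and obscures why the transonic case $M_{+}=1$, where $\Delta_{22}=0$ and the same absorption degenerates, needs the completely different coercivity argument of Lemma~\ref{lem.V1} based on $\tilde M-1$ and the monotonicity $\tilde u_{x}>0$. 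Making the $M_{1}$ positivity explicit (or at least stating where $M_{+}>1$ enters the weight-gradient estimate) would close the gap.
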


\begin{proof}
 A direct computation by using $\eqref{NSP1*}$ and
$\eqref{NSP.pt}_{1}$, we have the following identity
\begin{eqnarray}\label{3.7}
 \begin{aligned}[m]
[\rho\Phi(\rho,\tilde{\rho})]_{t}+[\rho
u\Phi(\rho,\tilde{\rho})]_{x}+(p(\rho)-p(\tilde{\rho}))\psi_{x}
+\tilde{u}_{x}[p(\rho)-p(\tilde{\rho})-p'(\tilde{\rho})\varphi]+\frac{p(\tilde{\rho})_{x}}{\tilde{\rho}}\varphi\psi
=0,
\end{aligned}
\end{eqnarray}
where
$\Phi(\rho,\tilde{\rho})=\displaystyle\int_{\tilde{\rho}}^{\rho}\frac{p(s)-p(\tilde{\rho})}{s^{2}}ds.$

It is easy to see that $\Phi(\rho,\tilde{\rho})$ is equivalent to
$|\varphi|^{2}$, i.e.,
 \begin{eqnarray}\label{3.7as}
c|\varphi|^{2}\leq\Phi(\rho,\tilde{\rho})\leq C|\varphi|^{2},
\end{eqnarray}
since there exist positive constants $c$ and $C$ such
that $\rho$ and $\tilde{\rho}$ satisfying
\begin{eqnarray}\label{3.7ahus}
0<c\leq\rho, \ \tilde{\rho} \leq C.
\end{eqnarray}
Multiply $\eqref{NSP.pt}_{2}$ by
$\psi$ and $\eqref{NSP.pt}_{3}$ by
$\zeta$, we have the following identity
\begin{eqnarray}\label{3.5}
 \begin{aligned}[b]
&(\frac{\rho}{2}\psi^{2}+\frac{\rho}{2}\zeta^{2})_{t}+(\frac{\rho u}{2}\psi^{2}+\frac{\rho u}{2}\zeta^{2})_{x}
+(p(\rho)-p(\tilde{\rho}))_{x}\psi-(\lambda\psi\psi_{x}+\nu\zeta\zeta_{x})_{x}+\mu\zeta^{2}+\lambda\psi_{x}^{2}+\nu\zeta_{x}^{2}\\
=&-\pa_x\tilde{u}[\tilde{u}\varphi\psi+\rho\psi^{2}]-\pa_x\tilde{\omega}(\tilde{u}\varphi\zeta+\rho\psi\zeta).
\end{aligned}
\end{eqnarray}
Combining  $\eqref{3.5}$ and  $\eqref{3.7}$, we have
\begin{eqnarray}\label{basic*}
\begin{aligned}[b]
&\left[\rho\Phi(\rho,\tilde{\rho})+\frac{\rho}{2}\psi^{2}+\frac{\rho}{2}\zeta^{2}\right]_{t}
+\left[\rho u\Phi(\rho,\tilde{\rho})+\frac{\rho
u}{2}\psi^{2}+\frac{\rho u}{2}\zeta^{2}+(p(\rho)-p(\tilde{\rho}))\psi-\lambda\psi\psi_{x}-\nu\zeta\zeta_{x}\right]_{x}\\&+\mu\zeta^{2}+\lambda\psi_{x}^{2}
+\nu\zeta_{x}^{2}
\\=&-\pa_x\tilde{u}[\tilde{u}\varphi\psi+\rho\psi^{2}+p(\rho)-p(\tilde{\rho})-p'(\tilde{\rho})\varphi]-\pa_x\tilde{\omega}(\tilde{u}\varphi\zeta+\rho\psi\zeta)-\frac{p(\tilde{\rho})_{x}}{\tilde{\rho}}\varphi\psi.
\end{aligned}
\end{eqnarray}

Multiplying \eqref{basic*} by $W_{\alpha,\beta}=(1+\beta x)^{\alpha}$ defined in Notation,
then we integrate the resulting equality over $\mathbb{R}_{+}$ to
get
\begin{equation}\label{z.eng1*}
\begin{aligned}[b]
\frac{d}{dt}&\int_{\R_{+}}W_{\alpha,\beta}\left[\rho\Phi(\rho,\tilde{\rho})+\frac{\rho}{2}\psi^{2}+\frac{\rho}{2}\zeta^{2}\right]dx
+\int_{\R_{+}}W_{\alpha,\beta}\left[\mu\zeta^{2}+\lambda\psi_{x}^{2}
+\nu\zeta_{x}^{2}\right]dx\\&
-\alpha\beta\int_{\R_{+}}
W_{\alpha-1,\beta}\left[\rho u\Phi(\rho,\tilde{\rho})+\frac{\rho
u}{2}\psi^{2}+\frac{\rho u}{2}\zeta^{2}+(p(\rho)-p(\tilde{\rho}))\psi\right]dx-\rho(0,t)u_{b}\Phi(\rho(0,t),\tilde{\rho}(0))\\
=&\underbrace{-\int_{\R_{+}}W_{\alpha,\beta}\left[\pa_x\tilde{u}(\tilde{u}\varphi\psi+\rho\psi^{2}+p(\rho)-p(\tilde{\rho})-p'(\tilde{\rho})\varphi)
+\pa_x\tilde{\omega}(\tilde{u}\varphi\zeta+\rho\psi\zeta)+\frac{p(\tilde{\rho})_{x}}{\tilde{\rho}}\varphi\psi\right]dx}_{J_{1}}
\\&\underbrace{-\alpha\beta\int_{\R_{+}}
W_{\alpha-1,\beta}\left(\lambda\psi\psi_{x}+\nu\zeta\zeta_{x}\right)dx}_{J_{2}},
\end{aligned}
\end{equation}
where we have used boundary condition $\psi(0,t)=0$ and $\zeta(0,t)=0$.

Now we estimate each term in $\eqref{z.eng1*}$.  We decompose $\rho$
as $\rho=\varphi+(\tilde{\rho}-\rho_{+})+\rho_{+},$ $u$ as
$u=\psi+(\tilde{u}-u_{+})+u_{+}$ and $\omega$ as
$\omega=\zeta+\tilde{\omega}$. Then we see,
under the condition $M_{+}>1$ and $u_{+}<0$, that
\begin{equation*}\label{z.eng1**}
\begin{aligned}[b]
&-\left[\rho u\Phi(\rho,\tilde{\rho})+\frac{\rho
u}{2}\psi^{2}+\frac{\rho u}{2}\zeta^{2}+(p(\rho)-p(\tilde{\rho}))\psi\right]\\
 \geq &\left[-\frac{\gamma Ku_{+}}{2}\rho_{+}^{\gamma-2}\varphi^{2}-K\gamma\rho_{+}^{\gamma-1}\varphi\psi-\frac{\rho_{+}u_{+}}{2}\psi^{2}
\right]-\frac{\rho_{+}u_{+}}{2}\zeta^{2}
\\&-C(N_{1}(T)+\tilde{\delta})(\varphi^{2}+\psi^{2}+\zeta^{2})\\
=&[\varphi,\psi]M_{1}[\varphi,\psi]^{T}
+\frac{\rho_{+}|u_{+}|}{2}\zeta^{2}
-C(N_{1}(T)+\tilde{\delta})(\varphi^{2}+\psi^{2}+\zeta^{2}),
\end{aligned}
\end{equation*}
 where $[\,]^T$ denotes the transpose of a row vector, and the
$2\times 2$ real symmetric matrix $M_{1}$ is given by
\begin{eqnarray*}
\begin{aligned} &\left(\begin{array} {cccccc}
-\frac{\gamma Ku_{+}}{2}\rho_{+}^{\gamma-2}\  \  \  & -\frac{K\gamma\rho_{+}^{\gamma-1}}{2} \\
 -\frac{K\gamma\rho_{+}^{\gamma-1}}{2} \  \  \ & -\frac{\rho_{+}u_{+}}{2} \\
\end{array} \right).
\end{aligned}
\end{eqnarray*}
One can compute all the leading principal minors  $\Delta_{ll}$
$(1\leq l\leq 2)$  of $M_{1}$ as follows:
\begin{equation*}
\begin{split}
\Delta_{11}=-\frac{\gamma Ku_{+}}{2}\rho_{+}^{\gamma-2}>0,\ \ \
\Delta_{22}=\frac{\gamma K}{4}\rho_{+}^{\gamma-1}(u_{+}^{2}-\gamma K\rho_{+}^{\gamma-1})>0,
\end{split}
\end{equation*}
where we have used the condition $M_{+}>1$ and $u_{+}<0.$

Thus we have
\begin{equation*}\label{J11fdkp}
\begin{aligned}[b]
-\alpha\beta\int_{\R_{+}}
W_{\alpha-1,\beta}\left[\rho u\Phi(\rho,\tilde{\rho})+\frac{\rho
u}{2}\psi^{2}+\frac{\rho u}{2}\zeta^{2}+(p(\rho)-p(\tilde{\rho}))\psi\right]dx \geq
c\beta\|[\varphi,\psi,\zeta]\|_{\alpha-1,\beta}^{2},
\end{aligned}
\end{equation*}
where we take $N_{1}(T)$ and $\tilde{\delta}$ small enough.

It is easy to obtain that
\begin{equation*}\label{Q95-2}
\begin{aligned}[b]
-\rho(0,t)u_{b}\Phi(\rho(0,t),\tilde{\rho}(0))\geq
c\varphi(0,t)^{2}.
\end{aligned}
\end{equation*}
Using  Theorem \ref{stationary}(i), Lemma \ref{lem.V}(i), \eqref{NSfvP.ptid} and
Cauchy-Schwarz's inequality with $0<\eta<1$, we have
\begin{equation*}\label{Q95-2}
\begin{aligned}[b]
|J_{1}| \leq C\tilde{\delta}\varphi(0,t)^{2}
+C\tilde{\delta}\|\partial_{x}[\varphi,\psi,\zeta]\|^{2},
\end{aligned}
\end{equation*}
\begin{equation*}\label{Q95-2}
\begin{aligned}[b]
|J_{2}|\leq \eta\|\partial_{x}[\psi,\zeta]\|_{\alpha-1,\beta}^{2} +C_{\eta}\beta^{2}\|[\psi,\zeta]\|_{\alpha-1,\beta}^{2}.
\end{aligned}
\end{equation*}

Inserting the above estimations into $\eqref{z.eng1*}$ and then
choosing $\eta$, $N_{1}(T)$, $\tilde{\delta}$ and $\beta$ suitably small,
we obtain
\begin{eqnarray}\label{3.44ew}
\begin{aligned}[b]
\frac{d}{dt}&\int_{\R_{+}}W_{\alpha,\beta}\left[\rho\Phi(\rho,\tilde{\rho})+\frac{\rho}{2}\psi^{2}+\frac{\rho}{2}\zeta^{2}\right] dx +c\varphi(0,t)^{2}
\\&+c\beta\|[\varphi,\psi,\zeta]\|_{\alpha-1,\beta}^{2}
+c\|[\zeta,\pa_{x}[\psi,\zeta]]\|_{\alpha,\beta}^{2} \leq
C\tilde{\delta}\|\partial_{x}\varphi\|^{2}.
\end{aligned}
\end{eqnarray}

Multiplying $\eqref{3.44ew}$ by $(1+t)^{\xi}$ and integrating in
$\tau$ over $[0,t]$ for any $0\leq t \leq T$, we have the
desired estimate \eqref{1m} for $\alpha \in (0,\vartheta]$.
Next, we prove \eqref{1m} holds for $\alpha=0$.
Multiplying \eqref{basic*} by  $(1+t)^{\xi}$ and integrating the resulting
equality  over $\R_{+}\times (0,t)$, then \eqref{1m} holds for $\alpha=0$
with the aid of Theorem \ref{stationary}(i), Lemma \ref{lem.V}(i) and boundary condition \eqref{NSfvP.ptid}.
\end{proof}

\begin{lemma}\label{lem.V1bnmv}
There exists a positive constant $\varepsilon_{1}$ such that if
$N_{1}(T)+\tilde{\delta}+\beta\leq\varepsilon_{1}$, then
\begin{eqnarray}\label{2m}
\begin{aligned}[b]
  &(1+
t)^{\xi}\| \pa_{x}\varphi\|^{2}+\int_{0}^{t}(1+
\tau)^{\xi}\|\partial_{x}\varphi\|^{2}d\tau+\int_{0}^{t}(1+
\tau)^{\xi}(\pa_x\varphi)^{2}(0,\tau)d\tau\\
 \leq &C \left(\|[\varphi_{0},\psi_{0},\zeta_{0}]\|_{\vartheta,\beta}^{2}
+\|\partial_{x}\varphi_{0}\|^{2}\right) +\xi\int_{0}^{t}(1+
\tau)^{\xi-1}\left(\|[\varphi,\psi,\zeta](\tau)\|_{\alpha,\beta}^{2}+\|\partial_{x}\varphi\|^{2}\right)d\tau
\\ &+CN_{1}(T)\int_{0}^{t}(1+\tau)^{\xi}\|\pa^{2}_x\psi\|^{2}d\tau
\end{aligned}
\end{eqnarray}
holds for $\xi\geq0$.
\end{lemma}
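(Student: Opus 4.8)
The plan is to derive the estimate \eqref{2m} by testing the first perturbation equation $\eqref{NSP.pt}_1$ against a suitable multiple of $\partial_x\psi$, after first rewriting $\eqref{NSP.pt}_2$ to express $\partial_x(p(\rho)-p(\tilde\rho))$ — and hence, modulo lower-order terms, $\partial_x\varphi$ — in terms of $\rho(\partial_t\psi+u\partial_x\psi)$, $\lambda\partial_x^2\psi$ and the stationary-solution remainder. The standard device here is to differentiate $\eqref{NSP.pt}_1$ in $x$, or equivalently to work with the combined quantity $\rho\psi+\text{(pressure term)}$; concretely I would multiply $\eqref{NSP.pt}_2$ by $\partial_x\varphi$ and integrate over $\R_+$. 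The term $\partial_x(p(\rho)-p(\tilde\rho))\,\partial_x\varphi = p'(\rho)(\partial_x\varphi)^2 + (\text{l.o.t.})\partial_x\varphi$ produces the coercive quantity $\int_{\R_+} p'(\rho)(\partial_x\varphi)^2\,dx \gtrsim \|\partial_x\varphi\|^2$, using the uniform bounds \eqref{3.7ahus}. The remaining terms are handled as follows: $\rho\partial_t\psi\,\partial_x\varphi$ is converted to a time derivative $\frac{d}{dt}\int\rho\psi\,\partial_x\varphi\,dx$ plus $-\int\rho\psi\,\partial_x\varphi_t\,dx$, and $\partial_t\varphi$ is substituted from $\eqref{NSP.pt}_1$ so that $\partial_x\varphi_t$ becomes $-\partial_x(u\partial_x\varphi+\rho\partial_x\psi+\dots)$; after integration by parts this yields terms controlled by $\|\partial_x\psi\|^2$, $\|\partial_x^2\psi\|^2$, $\|\partial_x\varphi\|^2$ (with a small constant, absorbable since $N_1(T)$ is small) and boundary contributions.

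The boundary term at $x=0$ deserves care: integrating $\lambda\partial_x^2\psi\,\partial_x\varphi$ by parts, or the time-derivative term, throws off a boundary term that, after using $\psi(0,t)=0$ and the equation at the boundary, reduces to something proportional to $(\partial_x\varphi)^2(0,t)$ with a favorable sign (recall $u_b<0$, the outflow condition), giving the $\int_0^t(1+\tau)^\xi(\partial_x\varphi)^2(0,\tau)\,d\tau$ term on the left of \eqref{2m}; alternatively, if the sign is unfavorable it is absorbed using Lemma \ref{lem.V}(i) applied to $\partial_x\psi$. I would then multiply the resulting differential inequality by $(1+t)^\xi$, integrate in $\tau$ over $[0,t]$, and move the $\frac{d}{dt}[(1+t)^\xi(\cdots)]$ expansion's lower-order piece $\xi(1+\tau)^{\xi-1}(\cdots)$ to the right-hand side, noting that $\int\rho\psi\,\partial_x\varphi\,dx$ is bounded by $\|[\varphi,\psi,\zeta]\|_{\alpha,\beta}^2+\|\partial_x\varphi\|^2$ up to constants (using $W_{\alpha,\beta}\geq 1$), which is exactly the form appearing in \eqref{2m}.

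The terms involving the stationary solution, i.e. factors of $\partial_x\tilde u$, $\partial_x\tilde\rho$, $\partial_x\tilde\omega$, $p(\tilde\rho)_x$, all carry the factor $\tilde\delta e^{-\sigma x}$ by Theorem \ref{stationary}(i); pairing these with $\varphi$, $\psi$, $\zeta$, $\partial_x\varphi$ and using Lemma \ref{lem.V}(i) converts them into $C\tilde\delta\big(\varphi^2(0,t)+(\partial_x\varphi)^2(0,t)+\|\partial_x[\varphi,\psi,\zeta]\|^2\big)$, all of which are either on the left-hand side with a larger constant (so absorbable once $\tilde\delta$ is small) or already controlled by previously established estimates; in the final bookkeeping these get folded into the $C\tilde\delta$-type terms that Proposition \ref{priori.est} will later absorb by combining \eqref{1m} and \eqref{2m}. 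The microrotation terms contribute $\|\zeta\|^2,\|\partial_x\zeta\|^2$-type quantities with small coefficients, harmless here.

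The main obstacle I anticipate is the appearance of $\|\partial_x^2\psi\|^2$ on the right-hand side with coefficient $CN_1(T)$: this term cannot be absorbed at this stage (it is genuinely of higher order than anything on the left of \eqref{2m}), so it must be carried along and only later, in a separate lemma estimating $\|\partial_x^2\psi\|^2$ and $\|\partial_x^2\zeta\|^2$, combined with \eqref{2m} via a smallness-of-$N_1(T)$ argument inside the proof of Proposition \ref{priori.est}. Getting the chain of dependencies right — so that no estimate circularly requires a later one with a too-large constant — is the delicate part; the actual integrations by parts are routine given the uniform bounds and the decay of the stationary solution.
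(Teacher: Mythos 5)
Your proposal correctly identifies the general strategy (testing against $\partial_x\varphi$, differentiating the mass equation in $x$, exploiting the outflow sign at the boundary), but it omits the one structural device that makes the estimate close, and as written it has two consequent gaps.

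The paper's proof does not merely multiply $\eqref{NSP.pt}_2$ by $\partial_x\varphi$ and then substitute $\partial_t\partial_x\varphi$ inside the resulting $-\int\rho\psi\,\partial_t\partial_x\varphi\,dx$ term (which, as you describe it, amounts to multiplying the $x$-differentiated mass equation by $-\psi$). Instead it forms the specific linear combination
$$
\bigl[\partial_x\eqref{NSP.pt}_1\bigr]\cdot\frac{\lambda\,\partial_x\varphi}{\rho^2}
\;+\;\eqref{NSP.pt}_2\cdot\frac{\partial_x\varphi}{\rho}.
$$
This combination does three things at once, all of which your sketch is missing. First, the two copies of $\lambda\,\partial_x^2\psi\,\frac{\partial_x\varphi}{\rho}$ (one from $\rho\,\partial_x^2\psi\cdot\frac{\lambda\partial_x\varphi}{\rho^2}$ in the differentiated mass equation, one from the viscous term in the momentum equation) cancel \emph{exactly}. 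Without that cancellation, the term $\lambda\int\partial_x^2\psi\,\partial_x\varphi\,dx$ remains, and any Cauchy--Schwarz treatment of it yields $\eta\|\partial_x\varphi\|^2+C_\eta\|\partial_x^2\psi\|^2$ with an $O(1)$ constant in front of $\|\partial_x^2\psi\|^2$; that constant cannot be absorbed by Lemma \ref{lem.V1bnmvhj} and the bootstrap fails. After the cancellation, the only $\|\partial_x^2\psi\|^2$ contributions come from genuinely nonlinear leftovers such as $\frac{\lambda}{2}\int\frac{\partial_x\psi}{\rho^2}(\partial_x\varphi)^2\,dx$, whose extra factor of $\partial_x\psi$ is what produces the small prefactor $CN_1(T)$ in \eqref{2m}; so your anticipated ``$CN_1(T)\|\partial_x^2\psi\|^2$'' term is exactly the signature of the cancellation you skipped. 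Second, the term $\frac{\lambda\,\partial_x\varphi}{\rho^2}\,\partial_t\partial_x\varphi$ contributes $\frac{d}{dt}\,\frac{\lambda(\partial_x\varphi)^2}{2\rho^2}$, so the time-differentiated functional is $\int\bigl[\psi\,\partial_x\varphi+\frac{\lambda(\partial_x\varphi)^2}{2\rho^2}\bigr]dx$, which is bounded below by $c\|\partial_x\varphi\|^2-C\|\psi\|^2$. Your functional $\int\rho\psi\,\partial_x\varphi\,dx$ alone is sign-indefinite and cannot possibly yield the term $(1+t)^\xi\|\partial_x\varphi(t)\|^2$ on the left of \eqref{2m}; the quadratic $\frac{\lambda(\partial_x\varphi)^2}{2\rho^2}$ is indispensable for that. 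Third, the term $\lambda u\,\frac{\partial_x\varphi\,\partial_x^2\varphi}{\rho^2}$ — again coming from the differentiated mass equation under this specific multiplier — integrates by parts to $\frac{\lambda u_b}{2\rho(0,t)^2}(\partial_x\varphi)^2(0,t)+\cdots$, and it is the outflow condition $u_b<0$ that makes this boundary term favorable; integrating $\lambda\int\partial_x^2\psi\,\partial_x\varphi\,dx$ by parts (as you suggest) produces $\lambda\,\partial_x\psi(0,t)\partial_x\varphi(0,t)$ and a volume term involving the uncontrolled $\partial_x^2\varphi$, neither of which gives the desired $(\partial_x\varphi)^2(0,t)$. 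In short: the missing idea is the precise weight $\frac{\lambda\,\partial_x\varphi}{\rho^2}$ on the differentiated continuity equation; once it is put in, the three difficulties above resolve simultaneously, and the remainder of your bookkeeping (decay of the stationary profile, Lemma \ref{lem.V}(i), multiplication by $(1+t)^\xi$) goes through as you describe.
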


\begin{proof}
We first differentiate  $\eqref{NSP.pt}_{1}$  with respect to $x$,
multiplying the resulting equations and $\eqref{NSP.pt}_{2}$ by
 $\frac{\lambda\pa_x\varphi}{\rho^{2}}$ and $\frac{\pa_x\varphi}{\rho}$ respectively to obtain
\begin{equation}\label{d.rho.ip3}
\begin{aligned}[b]
&\lambda\frac{\pa_x\varphi}{\rho^{2}}\pa_t\pa_x\varphi
+\lambda\pa_xu\frac{(\pa_x\varphi)^{2}}{\rho^{2}}+\lambda
u\frac{\pa_x\varphi\pa^{2}_x\varphi}{\rho^{2}} +
\lambda\frac{\pa_x\varphi}{\rho^{2}}\pa_x\rho\pa_x\psi
+\lambda\pa_x\tilde{u}\frac{(\pa_x\varphi)^{2}}{\rho^{2}}\\
&\qquad =-\lambda\pa^2_x \psi\frac{\pa_x\varphi}{\rho}
-\lambda\pa^{2}_x\tilde{u}\varphi\frac{\pa_x\varphi}{\rho^{2}}-
\lambda\pa_x\tilde{\rho}\pa_x\psi\frac{\pa_x\varphi}{\rho^{2}}-\lambda\pa^{2}_x\tilde{\rho}\psi\frac{\pa_x\varphi}
{\rho^{2}},
\end{aligned}
\end{equation}

\begin{equation}\label{tu1.ip1}
\begin{aligned}[b]
&\pa_t\psi\pa_x\varphi +u\pa_x\psi \pa_x\varphi
 +\pa_x
[p(\rho)-p(\tilde{\rho})]\frac{\pa_x\varphi}{\rho}=\lambda\pa^2_x
\psi\frac{\pa_x\varphi}{\rho}
-\frac{\tilde{u}\pa_x\tilde{u}}{\rho}\varphi
\pa_x\varphi-\pa_x\tilde{u}\psi\pa_x\varphi.
\end{aligned}
\end{equation}

The summation of  \eqref{d.rho.ip3} and \eqref{tu1.ip1},
 and then taking integration
over $\R_{+}$ further imply
\begin{equation}\label{sum.d1}
\begin{aligned}[b]
\frac{d}{dt}&\int_{\R_{+}}\left[\psi \pa_x\varphi+\lambda
\frac{(\pa_x\varphi)^{2}}{2\rho^{2}}\right]dx
+\int_{\R_{+}}
\frac{p'(\tilde{\rho})}{\rho}(\pa_x\varphi)^{2}dx\\
=&\int_{\R_{+}}\psi \pa_t\pa_x\varphi dx
-\lambda\int_{\R_{+}}(\pa_x\varphi)^{2}\rho^{-3}\partial_{t}\rho
dx-\int_{\R_{+}}\pa_x
[p(\rho)-p(\tilde{\rho})-p'(\tilde{\rho})\varphi]\frac{\pa_x\varphi}{\rho}dx
\\&-\int_{\R_{+}}
\frac{\pa_xp'(\tilde{\rho})}{\rho}\varphi\pa_x\varphi dx
-\int_{\R_{+}}u\pa_x\psi \pa_x\varphi
dx-\int_{\R_{+}}\frac{\tilde{u}\pa_x\tilde{u}}{\rho}\varphi
\pa_x\varphi dx-\int_{\R_{+}}\pa_x\tilde{u}\psi\pa_x\varphi dx\\&
-\lambda\int_{\R_{+}}\pa_xu\frac{(\pa_x\varphi)^{2}}{\rho^{2}}dx
-\lambda\int_{\R_{+}}u\frac{\pa_x\varphi\pa^{2}_x\varphi}{\rho^{2}}
dx-\lambda\int_{\R_{+}} \frac{\pa_x\varphi}{\rho^{2}}\pa_x\rho\pa_x\psi
dx-\lambda\int_{\R_{+}}\pa^{2}_x\tilde{u}\varphi\frac{\pa_x\varphi}{\rho^{2}}
dx\\&-\lambda
\int_{\R_{+}}\pa_x\tilde{\rho}\pa_x\psi\frac{\pa_x\varphi}{\rho^{2}}
dx-\lambda\int_{\R_{+}}\pa^{2}_x\tilde{\rho}\psi\frac{\pa_x\varphi}{\rho^{2}}
dx-\int_{\R_{+}}\lambda\pa_x\tilde{u}\frac{(\pa_x\varphi)^{2}}{\rho^{2}}
dx=\sum\limits_{l=3}^{16}J_{l},
\end{aligned}
\end{equation}
where $J_l$ $(3\leq l\leq 15)$  denote the
corresponding terms on the left of \eqref{sum.d1}.

Applying Sobolev's inequality, Young's inequality and
Cauchy-Schwarz's inequality with $0<\eta<1$ and using Theorem \ref{stationary}(i), Lemma
\ref{lem.V}(i),  one has
\begin{equation*}
\begin{aligned}[b]
J_3=&\int_{\R_{+}}\pa_x\psi \pa_x(\rho u-\tilde{\rho}
\tilde{u})dx\\
=&\int_{\R_{+}}\rho(\pa_x\psi)^{2}dx+\int_{\R_{+}}\pa_x\tilde{\rho}
\psi\pa_x\psi dx+\int_{\R_{+}}\varphi\pa_x\tilde{u}\pa_x\psi
dx+\int_{\R_{+}}u\pa_x\psi\pa_x\varphi dx\\
\leq&(\eta+C\tilde{\delta})\|\pa_x\varphi\|^{2}+(C_{\eta}+C\tilde{\delta})\|\pa_x\psi\|^{2}
+C\tilde{\delta}\varphi(0,t)^{2},
\end{aligned}
\end{equation*}
\begin{equation*}
\begin{aligned}[b]
&J_4+J_{10}+J_{11}\\=&
-\frac{\lambda|u_{b}|}{2\rho(0,t)^{2}}(\pa_x\varphi)^{2}(0,t)
+\frac{\lambda}{2}\int_{\R_{+}}\pa_x
\tilde{u}(\pa_x\varphi)^{2}\rho^{-2}dx+\frac{\lambda}{2}\int_{\R_{+}}\pa_x
\psi(\pa_x\varphi)^{2}\rho^{-2}dx\\
\leq&-\frac{\lambda|u_{b}|}{2\rho(0,t)^{2}}(\pa_x\varphi)^{2}(0,t)
+C(N_{1}(T)+\tilde{\delta})\|\pa_x\varphi\|^{2}+CN_{1}(T)\|\pa^{2}_x\psi\|^{2},
\end{aligned}
\end{equation*}
\begin{equation*}
\begin{aligned}[b]
&|J_{7}|+|J_{12}|+|J_{14}|+|J_{16}|\\ \leq &
(\eta+CN_{1}(T)+C\tilde{\delta})\|\pa_x\varphi\|^2
+(C_{\eta}+C\tilde{\delta})\|\pa_x\psi\|^2
+CN_{1}(T)\|\pa^{2}_x\psi\|^{2},
\end{aligned}
\end{equation*}
\begin{equation*}
\begin{aligned}[b]
&|J_{5}|+|J_{6}|+|J_{8}|+|J_{9}|+|J_{13}|+|J_{15}| \\ \leq&
C(N_{1}(T)+\tilde{\delta})\|\pa_x[\varphi,\psi]\|^2
+C\tilde{\delta}\varphi^{2}(0,t).
\end{aligned}
\end{equation*}

Inserting the above estimates for $J_l$ $(3\leq l\leq 15)$ into
\eqref{sum.d1} and then choosing $\eta$, $N_{1}(T)$ and $\tilde{\delta}$
 suitably small, we obtain
\begin{equation}\label{1.eng1}
\begin{aligned}[b]
&\frac{d}{dt}\int_{\R_{+}}\left[\psi
\pa_x\varphi+\lambda\frac{(\pa_x\varphi)^{2}}{2\rho^{2}} \right]dx
+\|\pa_{x}\varphi\|^{2}
+(\pa_x\varphi)^{2}(0,t) \\
\leq & C\|\pa_x\psi\|^2
+CN_{1}(T)\|\pa^{2}_x\psi\|^{2}+\varphi^{2}(0,t).
\end{aligned}
\end{equation}

Multiplying $\eqref{1.eng1}$ by $(1+t)^{\xi}$ and integrating in
$\tau$ over $[0,t]$ for any $0\leq t \leq T$, using \eqref{1m} and
Cauchy-Schwarz's inequality, one has \eqref{2m}.
\end{proof}

\begin{lemma}\label{lem.V1bnmvhj}
There exists a positive constant $\varepsilon_{1}$ such that if
$N_{1}(T)+\tilde{\delta}+\beta\leq\varepsilon_{1}$, then
\begin{eqnarray}\label{3m}
\begin{aligned}[b]
  &(1+
t)^{\xi}\| \pa_{x}[\psi,\zeta](t)\|^{2}+\int_{0}^{t}(1+
\tau)^{\xi}\|\partial_{x}^{2}[\psi,\zeta]\|^{2}d\tau\\
 \leq &C \left(\|[\varphi_{0},\psi_{0},\zeta_{0}]\|_{\vartheta,\beta}^{2}
+\|\partial_{x}[\varphi_{0},\psi_{0},\zeta_{0}]\|^{2}\right)\\
&
 +\xi\int_{0}^{t}(1+
\tau)^{\xi-1}\left(\|[\varphi,\psi,\zeta](\tau)\|_{\alpha,\beta}^{2}
+\|\partial_{x}[\varphi,\psi,\zeta]\|^{2}\right)d\tau
\end{aligned}
\end{eqnarray}
holds for $\xi\geq0$.
\end{lemma}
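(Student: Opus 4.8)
The plan is to carry out the standard second-order energy estimate for the parabolic unknowns $\psi$ and $\zeta$, and then to couple it with the already-proved bounds $\eqref{1m}$ and $\eqref{2m}$. First I would multiply $\eqref{NSP.pt}_{2}$ by $-\partial_{x}^{2}\psi$ and $\eqref{NSP.pt}_{3}$ by $-\partial_{x}^{2}\zeta$, add the two resulting identities, and integrate over $\R_{+}$. Because $\eqref{NSfvP.ptid}$ gives $\psi(0,t)=\zeta(0,t)=0$, hence also $\partial_{t}\psi(0,t)=\partial_{t}\zeta(0,t)=0$, the boundary contributions from the time-derivative terms vanish after integration by parts; the convective terms $-\int_{\R_{+}}\rho u\,\partial_{x}\psi\,\partial_{x}^{2}\psi\,dx$ and $-\int_{\R_{+}}\rho u\,\partial_{x}\zeta\,\partial_{x}^{2}\zeta\,dx$ produce, using $u(0,t)=u_{b}<0$, the favourable boundary terms $\tfrac{1}{2}\rho(0,t)|u_{b}|\big[(\partial_{x}\psi)^{2}+(\partial_{x}\zeta)^{2}\big](0,t)$; and $\mu\int_{\R_{+}}\zeta(-\partial_{x}^{2}\zeta)\,dx=\mu\|\partial_{x}\zeta\|^{2}$ adds a further non-negative term. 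Collecting these, the left side of the identity carries $\frac{d}{dt}\int_{\R_{+}}\tfrac{\rho}{2}\big[(\partial_{x}\psi)^{2}+(\partial_{x}\zeta)^{2}\big]\,dx$, the dissipation $\lambda\|\partial_{x}^{2}\psi\|^{2}+\nu\|\partial_{x}^{2}\zeta\|^{2}+\mu\|\partial_{x}\zeta\|^{2}$ and the good boundary terms, while the right side is a sum of error integrals.

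Next I would estimate those error integrals. The terms arising from the integrations by parts involve $\partial_{t}\rho$ and $\partial_{x}(\rho u)=-\partial_{t}\varphi$ (the latter from $\eqref{NSP1*}_{1}$), and $\eqref{NSP.pt}_{1}$ rewrites $\partial_{t}\varphi$ through $\partial_{x}\varphi$, $\partial_{x}\psi$ and lower-order quantities; so these terms are cubic, with at most one second-order derivative factor, and the one-dimensional Sobolev inequality bounds them by $\eta\|\partial_{x}^{2}[\psi,\zeta]\|^{2}+CN_{1}(T)\|\partial_{x}[\varphi,\psi,\zeta]\|^{2}$ (up to a fractional power of $N_{1}(T)$, still small). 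For the pressure term I would split $\partial_{x}[p(\rho)-p(\tilde{\rho})]=p'(\rho)\partial_{x}\varphi+[p'(\rho)-p'(\tilde{\rho})]\partial_{x}\tilde{\rho}$: the first summand gives at most $\eta\|\partial_{x}^{2}\psi\|^{2}+C_{\eta}\|\partial_{x}\varphi\|^{2}$, and the second summand, together with every term carrying a factor $\partial_{x}\tilde{u}$, $\partial_{x}\tilde{\omega}$, $\tilde{\rho}-\rho_{+}$ or $\tilde{u}-u_{+}$ — in particular the micropolar coupling $\partial_{x}\tilde{\omega}\,(\tilde{u}\varphi+\rho\psi)\,\partial_{x}^{2}\zeta$ — is controlled by the exponential bound $C\tilde{\delta}e^{-\sigma x}$ of Theorem $\ref{stationary}$(i) and the weighted $L^{2}$ inequality of Lemma $\ref{lem.V}$(i), giving at most $\eta\|\partial_{x}^{2}[\psi,\zeta]\|^{2}+C\tilde{\delta}^{2}\big(\varphi^{2}(0,t)+\|\partial_{x}[\varphi,\psi]\|^{2}\big)$. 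The microrotation affects this estimate only through terms of the last type, and they are harmless precisely because $\partial_{x}\tilde{\omega}$ decays exponentially with amplitude $\tilde{\delta}$. Choosing $\eta$, and then $N_{1}(T)$ and $\tilde{\delta}$, small enough to absorb the $\eta$- and $(N_{1}(T)+\tilde{\delta})$-weighted copies of $\|\partial_{x}^{2}[\psi,\zeta]\|^{2}$ into the dissipation, I reach a differential inequality of the form
\begin{equation*}
\frac{d}{dt}\int_{\R_{+}}\tfrac{\rho}{2}\big[(\partial_{x}\psi)^{2}+(\partial_{x}\zeta)^{2}\big]\,dx+c\|\partial_{x}^{2}[\psi,\zeta]\|^{2}+c\|\partial_{x}\zeta\|^{2}\le C\|\partial_{x}\varphi\|^{2}+C\varphi^{2}(0,t)+C(N_{1}(T)+\tilde{\delta})\|\partial_{x}[\psi,\zeta]\|^{2}.
\end{equation*}

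Finally I would multiply this inequality by $(1+t)^{\xi}$ and integrate over $[0,t]$, using $\eqref{3.7ahus}$ to pass between $\int_{\R_{+}}\tfrac{\rho}{2}\big[(\partial_{x}\psi)^{2}+(\partial_{x}\zeta)^{2}\big]\,dx$ and $\|\partial_{x}[\psi,\zeta]\|^{2}$ up to equivalent constants, so that the initial value is bounded by $C\|\partial_{x}[\psi_{0},\zeta_{0}]\|^{2}$. Then $\int_{0}^{t}(1+\tau)^{\xi}\big(\|\partial_{x}[\psi,\zeta]\|^{2}+\varphi^{2}(0,\tau)\big)\,d\tau$ is dominated by the dissipation terms on the left side of $\eqref{1m}$, and $\int_{0}^{t}(1+\tau)^{\xi}\|\partial_{x}\varphi\|^{2}\,d\tau$ by $\eqref{2m}$; the $\|\partial_{x}\varphi\|^{2}$-term appearing on the right of $\eqref{1m}$ is reabsorbed once more through $\eqref{2m}$, and the residual copies of $\int_{0}^{t}(1+\tau)^{\xi}\|\partial_{x}^{2}\psi\|^{2}\,d\tau$ generated by $\eqref{2m}$, each carrying the factor $N_{1}(T)$, are moved back to the left side once $N_{1}(T)$ is small relative to the already-fixed $\eta$. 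What remains on the right is $C\big(\|[\varphi_{0},\psi_{0},\zeta_{0}]\|_{\vartheta,\beta}^{2}+\|\partial_{x}[\varphi_{0},\psi_{0},\zeta_{0}]\|^{2}\big)$ together with $\xi\int_{0}^{t}(1+\tau)^{\xi-1}\big(\|[\varphi,\psi,\zeta](\tau)\|_{\alpha,\beta}^{2}+\|\partial_{x}[\varphi,\psi,\zeta]\|^{2}\big)\,d\tau$, which is exactly $\eqref{3m}$ (the bonus term $c\int_{0}^{t}(1+\tau)^{\xi}\|\partial_{x}\zeta\|^{2}\,d\tau$ is simply discarded). The step I expect to be the main obstacle is not any individual estimate but the ordering of the absorptions: one must fix $\eta$ first; then take $N_{1}(T)$ small depending on $\eta$, because the pressure term generates the large constant $C_{\eta}$ in front of $\|\partial_{x}\varphi\|^{2}$, which after invoking $\eqref{2m}$ is traded for a constant multiple of $N_{1}(T)\int_{0}^{t}(1+\tau)^{\xi}\|\partial_{x}^{2}\psi\|^{2}\,d\tau$ and must still be absorbable by the dissipation; and only then take $\tilde{\delta}$ and $\beta$ small, so that no copy of the high-order dissipation and no term $\int_{0}^{t}(1+\tau)^{\xi}\|\partial_{x}\varphi\|^{2}\,d\tau$ is left uncontrolled on the right-hand side of $\eqref{3m}$.
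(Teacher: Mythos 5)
Your overall architecture matches the paper's: multiply the $\psi$- and $\zeta$-equations by second derivatives, derive a differential inequality for $\|\partial_x[\psi,\zeta]\|^2$, then multiply by $(1+t)^\xi$ and close it against the conclusions of Lemmas \ref{lem.V1bnm} and \ref{lem.V1bnmv}. However, there is a genuine sign error in your treatment of the convective terms, and your choice of multiplier is slightly different from the paper's.

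\textbf{The sign of the boundary term.} You claim that integrating by parts in $-\int_{\R_+}\rho u\,\partial_x\psi\,\partial_x^2\psi\,dx$ produces the favourable contribution $\tfrac12\rho(0,t)|u_b|(\partial_x\psi)^2(0,t)$ on the left-hand side. A direct computation gives instead
\[
-\int_{\R_+}\rho u\,\partial_x\psi\,\partial_x^2\psi\,dx
=-\int_{\R_+}\frac{\rho u}{2}\,\partial_x\bigl[(\partial_x\psi)^2\bigr]dx
=\frac{\rho(0,t)u_b}{2}(\partial_x\psi)^2(0,t)
+\int_{\R_+}\frac{\partial_x(\rho u)}{2}(\partial_x\psi)^2 dx,
\]
and since $u_b<0$ the boundary term equals $-\tfrac12\rho(0,t)|u_b|(\partial_x\psi)^2(0,t)$, i.e. it is \emph{negative} on the left-hand side, hence unfavourable. (The good sign observed in Lemma \ref{lem.V1bnmv} for the analogous $\varphi$-term arises only because that term carries an extra minus sign: $J_{11}=-\lambda\int u\,\partial_x\varphi\,\partial_x^2\varphi\,\rho^{-2}dx$.) With the sign corrected you would be left with a boundary term $(\partial_x\psi)^2(0,t)$ to control, for which you have no direct dissipation; you would need a trace inequality such as $(\partial_x\psi)^2(0,t)\le\eta\|\partial_x^2\psi\|^2+C_\eta\|\partial_x\psi\|^2$. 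The paper sidesteps this entirely by \emph{not} integrating by parts in the convective term at all: it estimates $J_{18}=\int u\,\partial_x\psi\,\partial_x^2\psi\,dx$ directly by Cauchy--Schwarz, getting $|J_{18}|\le\eta\|\partial_x^2\psi\|^2+C_\eta\|\partial_x[\varphi,\psi]\|^2$, with no boundary term at all.

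\textbf{The choice of multiplier.} You multiply by $-\partial_x^2\psi$ and $-\partial_x^2\zeta$, which keeps the factor $\rho$ on the time derivative and forces you to track terms involving $\partial_t\rho$ (and to appeal to the equivalence \eqref{3.7ahus}). The paper multiplies instead by $-\partial_x^2\psi/\rho$ and $-\partial_x^2\zeta/\rho$, which cancels $\rho$ cleanly so that the time-derivative term becomes $-\int\partial_t\psi\,\partial_x^2\psi\,dx=\tfrac{d}{dt}\int\tfrac12(\partial_x\psi)^2dx$ with no $\partial_t\rho$ residue. Your version does have the compensating advantage that $\mu\int\zeta(-\partial_x^2\zeta)dx=\mu\|\partial_x\zeta\|^2$ is a genuine extra dissipation, whereas the paper's $\mu\int\tfrac{\zeta}{\rho}\partial_x^2\zeta\,dx$ must be estimated by Cauchy--Schwarz; but this extra dissipation is not needed for the conclusion, and either way the result is the same. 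The remainder of your closing argument — absorbing $C_\eta\|\partial_x\varphi\|^2$ and $\varphi^2(0,t)$ through Lemmas \ref{lem.V1bnm} and \ref{lem.V1bnmv}, reabsorbing the $N_1(T)$-copies of $\int(1+\tau)^\xi\|\partial_x^2\psi\|^2d\tau$ once $N_1(T)$ is small, and fixing $\eta$ before $N_1(T)$, $\tilde\delta$, $\beta$ — is correct and matches the paper's strategy.
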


\begin{proof}
Multiplying $\eqref{NSP.pt}_{2}$ by $-\frac{\pa^{2}_x\psi}{\rho}$,
and then integrating the resulting equations over $\R_{+}$, one has
\begin{equation}\label{2.eng1}
\begin{aligned}[b]
&\frac{d}{dt}\int_{\R_{+}}\frac{(\pa_x\psi)^{2}}{2}dx
+\lambda\int_{\R_{+}}\frac{(\pa^{2}_x\psi)^{2}}{\rho}dx
\\
\qquad=&\underbrace{\int_{\R_{+}}\frac{\pa_x
[p(\rho)-p(\tilde{\rho})]}{\rho}\pa^{2}_x\psi
dx}_{J_{17}} \underbrace{+\int_{\R_{+}}u\pa_x\psi\pa^{2}_x\psi
dx}_{J_{18}}\underbrace{+\int_{\R_{+}}\frac{\tilde{u}\pa_x\tilde{u}}{\rho}\varphi\pa^{2}_x\psi
dx}_{J_{19}}\underbrace{+\int_{\R_{+}}\pa_x\tilde{u}\psi\pa^{2}_x\psi
dx}_{J_{20}}.
\end{aligned}
\end{equation}
We utilize integration by parts, Cauchy-Schwarz's inequality and
Lemma \ref{lem.V} to address the following estimates:
\begin{equation*}
\begin{split}
|J_{18}|\leq\eta\|\pa^{2}_x\psi\|^{2}+C_{\eta}\|\pa_x[\varphi,\psi]\|^{2},
\end{split}
\end{equation*}
and
\begin{equation*}
\begin{split}
|J_{17}|+|J_{19}|+|J_{20}|\leq\eta\|\pa^{2}_x\psi\|^{2}+C_{\eta}\tilde{\delta}\|\partial_{x}[\varphi,\psi]\|^2
+C\tilde{\delta}\varphi^{2}(0,t).
\end{split}
\end{equation*}
Substituting the above estimates for $J_{l}$ $(17\leq l\leq 20)$
into \eqref{2.eng1} and taking $\eta$  small enough, one has
\begin{equation}\label{2.eng2}
\begin{aligned}[b]
&\frac{d}{dt}\int_{\R_{+}}\frac{(\pa_x\psi)^{2}}{2}dx+\|\partial_{x}^{2}\psi\|^{2}
\leq C\|\partial_{x}[\varphi,\psi]\|^2 +C\varphi^{2}(0,t).
\end{aligned}
\end{equation}

Multiplying $\eqref{NSP.pt}_{3}$ by $-\frac{\pa^{2}_x\zeta}{\rho}$,
and integrating the resulting equality over $\R_{+}$, we obtain
\begin{equation}\label{sum.ip5}
\begin{split}
\frac{1}{2}&\frac{d}{dt}\int_{\R_{+}}(\pa_x\zeta)^{2}dx+\nu\int_{\R_{+}}\frac{(\pa^{2}_x\zeta)^{2}}{\rho}dx
=\underbrace{\int_{\R_{+}}u\pa_x\zeta\pa^{2}_x\zeta
dx}_{J_{21}}\underbrace{+\mu\int_{\R_{+}}\frac{\zeta}{\rho}\pa^{2}_x\zeta
dx}_{J_{22}}\underbrace{+\int_{\R_{+}}\frac{\pa_x\tilde{\omega}}{\rho}(\tilde{u}\varphi+\rho\psi)\pa^{2}_x\zeta dx}_{J_{23}}.
\end{split}
\end{equation}
To obtain the estimates for $J_{21}$-$J_{23}$, we use
Cauchy-Schwarz's inequality with $0<\eta<1$ to get
\begin{equation*}\label{3.3}
\begin{split}
|J_{21}|+|J_{22}| \leq
\eta\|\pa^{2}_x\zeta\|^{2}+C_{\eta}\|[\zeta,\pa_x\zeta]\|^{2},
\end{split}
\end{equation*}
\begin{equation*}
\begin{split}
|J_{23}|\leq\eta\|\pa^{2}_x\zeta\|^{2}+C_{\eta}\tilde{\delta}\|\partial_{x}[\varphi,\psi]\|^2
+C\tilde{\delta}\varphi^{2}(0,t).
\end{split}
\end{equation*}

Then we have
\begin{equation}\label{2.eng2asx}
\begin{aligned}[b]
&\frac{d}{dt}\int_{\R_{+}}\frac{(\pa_x\zeta)^{2}}{2}dx+\|\partial_{x}^{2}\zeta\|^{2}
\leq C\|[\zeta,\pa_x\zeta]\|^{2}+C\tilde{\delta}\|\partial_{x}[\varphi,\psi]\|^2
+C\tilde{\delta}\varphi^{2}(0,t)
\end{aligned}
\end{equation}
if $\eta$ is small enough.

The summation of $\eqref{2.eng2}$ and $\eqref{2.eng2asx}$, and multiplying the resulting inequality
 by $(1+t)^{\xi}$, then integrating the resulting inequality in $\tau$
over $[0,t]$ for any $0\leq t \leq T$, using \eqref{1m}, \eqref{2m}
and Cauchy-Schwarz's inequality, one has \eqref{3m}.
\end{proof}

\vspace{5mm}

\textbf{Proof of Proposition \ref{priori.est}}\ \
 Now, following the three steps above, we are ready to
prove Proposition
 $\ref{priori.est}$.  Summing up the estimates $\eqref{1m}$,
$\eqref{2m}$ and $\eqref{3m}$, and taking $\tilde{\delta}$ and
$N_{1}(T)$ suitably small, we have
\begin{eqnarray}\label{4m}
\begin{aligned}[b]
&(1+
t)^{\xi}\left(\|[\varphi,\psi,\zeta](t)\|_{\alpha,\beta}^{2}+\|
\pa_{x}[\varphi,\psi,\zeta](t)\|^{2}\right)+\int_{0}^{t}(1+
\tau)^{\xi}\|\partial_{x}[\varphi,\partial_{x}[\psi,\zeta]](\tau)\|^{2}d\tau\\&
+\int_{0}^{t}(1+ \tau)^{\xi}
\left(\beta\|[\varphi,\psi,\zeta,](\tau)\|_{\alpha-1,\beta}^{2}
+\|[\zeta,\partial_{x}[\psi,\zeta]](\tau)\|_{\alpha,\beta}^{2}\right)d\tau\\
\leq &C
\left(\|[\varphi_{0},\psi_{0},\zeta_{0}]\|_{\vartheta,\beta}^{2}
+\|\partial_{x}[\varphi_{0},\psi_{0},\zeta_{0}]\|^{2}\right)\\
&+\xi\int_{0}^{t}(1+
\tau)^{\xi-1}\left(\|[\varphi,\psi,\zeta](\tau)\|_{\alpha,\beta}^{2}
+\|\partial_{x}[\varphi,\psi,\zeta]\|^{2}\right)d\tau,
\end{aligned}
\end{eqnarray}
where $C$ is a positive constant independent of $T$, $\alpha$, $\beta$,
$N_{1}(T)$ and $\tilde{\delta}$. Hence, similarly as in
\cite{S. Kawashima, M. Nishikawasll}, applying an induction to
$\eqref{4m}$ gives desired estimate $\eqref{3.3x}.$

\subsection{The a priori estimates for $M_{+}=1$}
This subsection is devoted to prove the algebraic decay estimate for the transonic case $M_{+}=1$
in Theorem $\mathrm{\ref{1.2theorem}}$. For the convenience of stating the a priori assumption, we use the notations
\begin{eqnarray}\label{3.2nb}
 \begin{aligned}[b]
N_{2}(T):=\sup\limits_{0\leq t\leq
T}\|(\varphi,\psi,\zeta)(t)\|_{\vartheta,B,1}.
\end{aligned}
\end{eqnarray}

\begin{proposition}\label{priori.estfvb}
 Assume  the same conditions as in Theorem $\mathrm{\ref{1.2theorem}}$ (ii) hold.
 Suppose $[\varphi,\psi,\zeta]\in X([0,T))$
  is a solution to
\eqref{NSP.pt}, \eqref{NSP.ptid} and \eqref{NSfvP.ptid} which
satisfies $(1+ B
x)^{\frac{\vartheta}{2}}\varphi,(1+B x)^{\frac{\vartheta}{2}}\psi,(1+B
x)^{\frac{\vartheta}{2}}\zeta\in C([0,T];H^1(\mathbb{R}_{+}))$ for certain
positive constants $T$, $B$ and $\vartheta\in[2,\vartheta^{*})$, where $B$ and $\vartheta^{*}$ is respectively defined in \eqref{stationary1} and \eqref{1.thre1b}. For arbitrary $\alpha\in[0,\vartheta]$, there
exist positive constants $C$ and $\varepsilon_{2}$ independent of
$T$ such that if $N_{2}(T)+\tilde{\delta}\leq
\varepsilon_{2}$ is satisfied, it holds for an arbitrary $t\in[0,T]$ that
\begin{eqnarray}\label{3.3mnh}
 \begin{aligned}[b]
&(1+t)^{\frac{\alpha}{2}+\kappa}\|[\varphi,\psi,\zeta](t)\|_{\alpha,B,1}^{2}
+\int_{0}^{t}(1+
\tau)^{\frac{\alpha}{2}+\kappa}\|[\zeta,\partial_{x}[\varphi,\psi,\zeta],\partial_{x}^{2}[\psi,\zeta]](\tau)\|_{\alpha,B}^{2}d\tau
\\
\leq & C (1+ t)^{\kappa}\|[\varphi_{0},
\psi_{0},\zeta_{0}]\|_{\vartheta,B,1}^{2},
\end{aligned}
\end{eqnarray}
where $\kappa$ is positive constant.
\end{proposition}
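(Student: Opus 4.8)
The plan is to prove Proposition \ref{priori.estfvb} by adapting the weighted energy argument of the supersonic case (Lemmas \ref{lem.V1bnm}--\ref{lem.V1bnmvhj}) to the transonic setting $M_{+}=1$, where the stationary solution decays only algebraically (Theorem \ref{stationary}(ii)) rather than exponentially. Accordingly, one cannot gain a clean $\beta$-coercive term from the weight derivative as in \eqref{3.44ew}; instead the degenerate coercivity is recovered from the algebraic structure of the stationary wave together with the weight $W_{\alpha,B}=(1+Bx)^{\alpha}$, and the constant $B$ must be chosen adapted to $\tilde\delta$ via \eqref{stationary1} (the standard choice $B\sim\tilde\delta$) so that the profile $\tilde\chi-1$ and its derivatives behave like $\tilde\delta^{k+1}/(1+Bx)^{k+1}$, i.e., like $W_{-k-1,B}$ up to $\tilde\delta^{\,k}$ factors. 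The restriction $\vartheta\in[2,\vartheta^{*})$ with $\vartheta^{*}(\vartheta^{*}-2)=4/(\gamma+1)$ enters exactly at the point where one bounds the borderline term carrying the weight derivative against the main dissipation; this is the transonic analogue of the condition appearing in the Navier--Stokes outflow analysis of \cite{KNZhd}, and it is what forces the slower decay rate $(1+t)^{-\vartheta/4}$ rather than $(1+t)^{-\vartheta/2}$.

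First I would reproduce the zeroth-order weighted estimate: multiply the combined identity \eqref{basic*} by $W_{\alpha,B}$, integrate over $\R_{+}$, and handle the boundary term $-\rho(0,t)u_{b}\Phi(\rho(0,t),\tilde\rho(0))\geq c\varphi(0,t)^{2}$ and the microrotation term $\mu\zeta^{2}+\nu\zeta_{x}^{2}$ exactly as before. The genuinely new point is the weight-derivative term $-\alpha B\int W_{\alpha-1,B}[\rho u\Phi+\frac{\rho u}{2}\psi^{2}+\frac{\rho u}{2}\zeta^{2}+(p(\rho)-p(\tilde\rho))\psi]dx$: since $M_{+}=1$ the matrix $M_{1}$ degenerates ($\Delta_{22}\to0$), so the quadratic form in $(\varphi,\psi)$ is only nonnegative, not positive definite, and one must instead exploit the monotone stationary profile to get a term like $c\,B\int W_{\alpha-1,B}|[\psi,\zeta]|^{2}dx + c\int (-\tilde u_{x})W_{\alpha,B}\varphi^{2}dx$ while controlling the loss in $\varphi$ by the first-order estimate below. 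The source terms $J_{1},J_{2}$ are estimated with Theorem \ref{stationary}(ii) and Lemma \ref{lem.V}(ii) in place of (i); Lemma \ref{lem.V}(ii) is precisely what converts $\int W_{-k-1,B}h^{2}dx$ into $C\tilde\delta^{k}h^{2}(0,t)+C\tilde\delta^{k-1}\|\partial_xh\|^{2}$, so the stationary-wave factors are absorbed into the $\tilde\delta$-small coefficients.

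Next I would carry out the first-order estimates in three steps mirroring Lemmas \ref{lem.V1bnmv} (the estimate for $\partial_x\varphi$, obtained by the cross term $\psi\partial_x\varphi+\lambda(\partial_x\varphi)^{2}/2\rho^{2}$ and using the outflow sign $u_b<0$ to get the good boundary term $(\partial_x\varphi)^{2}(0,t)$) and \ref{lem.V1bnmvhj} (the $\partial_x^{2}\psi$ and $\partial_x^{2}\zeta$ estimates from testing $\eqref{NSP.pt}_{2,3}$ against $-\partial_x^{2}\psi/\rho$ and $-\partial_x^{2}\zeta/\rho$); each is carried through with the weight $W_{\alpha,B}$, which only introduces extra terms proportional to $B$ (hence $\tilde\delta$-small) times lower-weight quantities, plus stationary-wave terms handled again via Lemma \ref{lem.V}(ii) and Theorem \ref{stationary}(ii). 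Summing these with large constants, using $N_{2}(T)+\tilde\delta$ small to absorb the nonlinear and $\tilde\delta$-coefficient terms, yields the closed differential inequality
\begin{equation*}
\begin{aligned}
\frac{d}{dt}\,\mathcal{E}_{\alpha}(t)+c\,\|[\zeta,\partial_x[\varphi,\psi,\zeta],\partial_x^{2}[\psi,\zeta]](t)\|_{\alpha,B}^{2}+c\,\varphi^{2}(0,t)\leq C\,\|[\varphi,\psi,\zeta](t)\|_{\alpha-1,B}^{2},
\end{aligned}
\end{equation*}
with $\mathcal{E}_{\alpha}(t)\sim\|[\varphi,\psi,\zeta](t)\|_{\alpha,B,1}^{2}$. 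The main obstacle, and the place where $\vartheta<\vartheta^{*}$ is essential, is closing this hierarchy by the time-weighted induction: one multiplies by $(1+t)^{\frac{\alpha}{2}+\kappa}$, integrates, and must dominate $\frac{\alpha}{2}\int(1+\tau)^{\frac{\alpha}{2}+\kappa-1}\|[\varphi,\psi,\zeta]\|_{\alpha-1,B}^{2}d\tau$ plus the right-hand side of the displayed inequality by the dissipation at level $\alpha-2$; because the $\varphi$-dissipation is only the boundary term $\varphi^{2}(0,t)$ plus $\|\partial_x\varphi\|_{\alpha,B}^{2}$ — not a full interior $\|\varphi\|_{\alpha,B}^{2}$ — one recovers control of $\|\varphi\|_{\alpha-1,B}^{2}$ only after paying a factor governed by the algebraic decay rate of $\tilde\chi-1$, and the quadratic constraint $\vartheta^{*}(\vartheta^{*}-2)=4/(\gamma+1)$ is exactly the threshold below which this induction on $\alpha\in\{0,2,4,\dots,\vartheta\}$ closes with a uniform-in-$T$ bound. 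Iterating from $\alpha=0$ up to $\alpha=\vartheta$, with the $\kappa$-shift absorbing the lower-order time-weight losses as in \cite{S. Kawashima, M. Nishikawasll}, gives \eqref{3.3mnh}.
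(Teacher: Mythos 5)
Your overall strategy is the right one, and you correctly identify the key tools: the algebraic profile estimates of Theorem \ref{stationary}(ii) and Lemma \ref{lem.Vavbc}, the weight $W_{\alpha,B}$ with $B=\tilde\delta A$ tuned to the stationary wave, Lemma \ref{lem.V}(ii) for the source terms, the three-stage energy estimate in $[\varphi,\psi,\zeta]$, $\partial_x\varphi$, $\partial_x^2[\psi,\zeta]$, and the time-weighted induction of \cite{S. Kawashima, M. Nishikawasll}. However, the heart of the transonic argument --- how the degenerate interior dissipation in $[\varphi,\psi]$ is manufactured --- is misdescribed in a way that would prevent the proof from closing.

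First, a sign error: you propose a good term of the form $c\int(-\tilde u_x)W_{\alpha,B}\varphi^2\,dx$, but Lemma \ref{lem.Vavbc} (equation \eqref{stationary1}) gives $\tilde u_x(x)>0$ for the transonic profile, so $(-\tilde u_x)\varphi^2$ has the \emph{wrong} sign. The good sign actually comes the other way: the paper rewrites the basic identity using $\eqref{1.6}_1$ so that the term $\partial_x\tilde u\,[\rho\psi^2+p(\rho)-p(\tilde\rho)-p'(\tilde\rho)\varphi]$ appears on the left-hand side as $G_2$, and since $\tilde u_x>0$ and the bracket is a nonnegative convexity remainder plus $\rho\psi^2$, $G_2$ contributes positively, with size $\gtrsim \tilde\delta^2\int W_{\alpha-2,B}(\rho_+\psi^2+\tfrac{p''(\rho_+)}{2}\varphi^2)\,dx$ via \eqref{stationary1}. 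Second, the coercivity is not of the type $cB\int W_{\alpha-1,B}|[\psi,\zeta]|^2 dx$ recovered from a merely semidefinite $M_1$: instead, the paper inserts the refined lower bound $\tilde M-1\gtrsim \tilde\delta/(1+Bx)$ from \eqref{stationary2} into the weight-derivative term $G_1$, and the net degenerate dissipation is obtained only from the \emph{combination} $G_1+G_2+G_3$, where $G_3=-\tfrac{\lambda\alpha(\alpha-1)}{2}B^2\int W_{\alpha-2,B}\psi^2 dx$ is a loss (from twice integrating by parts the $-\lambda\psi\psi_x$ flux) whose coefficient exactly matches the gains after substituting $B=\tilde\delta A$ and $A=\tfrac{(\gamma+1)\rho_+}{2\lambda}$. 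This is where the restriction $\alpha\le\vartheta<\vartheta^*$ actually bites: the surviving coefficient of $\psi^2$ is proportional to $\tfrac{4}{\gamma+1}-\alpha(\alpha-2)$, which is positive precisely because $\alpha<\vartheta^*$ with $\vartheta^*(\vartheta^*-2)=\tfrac{4}{\gamma+1}$. The condition therefore enters \emph{pointwise} in the energy estimate \eqref{z.eng1mbvnj*}, not (as you suggest) as a threshold for closing the time-weighted induction.

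Consequently, your model differential inequality with a loss term $C\|[\varphi,\psi,\zeta]\|_{\alpha-1,B}^2$ on the right has the wrong structure; the paper's \eqref{3.44ewcv} instead has the \emph{gain} terms $c\tilde\delta^2\|[\varphi,\psi]\|_{\alpha-2,B}^2+c\tilde\delta\|\zeta\|_{\alpha-1,B}^2$ on the left, and the induction is powered by step-downs of $\alpha$ by $2$ against an increase of $\xi$ by $1$, which is why the final decay exponent is $\alpha/2+\kappa$ rather than $\alpha+\kappa$. You should also note the two-tier smallness $N_2(T)\ll\tilde\delta^2$ used to absorb the $C\tilde\delta N_2(T)\|[\varphi,\psi]\|_{\alpha-2,B}^2$-type errors into the $c\tilde\delta^2$-dissipation; without it the absorption fails even with $N_2(T)+\tilde\delta$ small. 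Filling these points in (correct sign of $\tilde u_x$, the exact $G_1+G_2+G_3$ coercivity computation with the $\vartheta^*$ threshold, and the $\tilde\delta^2$-level dissipation driving the induction) would complete the proposal.
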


In order to prove Proposition \ref{priori.estfvb}, we need to get a lower estimate for $\tilde{u}(x)$ and the Mach number $\tilde{M}$ on the stationary solution $(\tilde{\rho}(x),\tilde{u}(x))$  defined
by $\tilde{M}:=\frac{|\tilde{u}(x)|}{\sqrt{p'(\tilde{\rho}(x))}}$.

\begin{lemma}\label{lem.Vavbc}(See \cite{KNZhd})
The stationary solution $\tilde{u}(x)$ satisfies
\begin{equation}\label{stationary1}
\begin{split}
\tilde{u}_{x}(x)\geq A\left(\frac{u_{+}}{u_{b}}\right)^{\gamma+2}\frac{\tilde{\delta}^{2}}{(1+Bx)^{2}}>0,\ \ \
 A:=\frac{(\gamma+1)\rho_{+}}{2\lambda},\ \ \ B=\tilde{\delta} A
\end{split}
\end{equation}
for $x\in(0,\infty)$. Moreover, there exists a positive constant $C$ such that
\begin{equation}\label{stationary2}
\begin{split}
\frac{(\gamma+1)\tilde{\delta}}{2|u_{+}|(1+Bx)}-C\frac{\tilde{\delta}^{2}}{(1+Bx)^{2}}
\leq\tilde{M}-1\leq C\frac{\tilde{\delta}}{1+Bx}.
\end{split}
\end{equation}
\end{lemma}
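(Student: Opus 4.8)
The plan is to reduce both estimates to the scalar stationary equation for the Navier--Stokes part \eqref{1.6cdss}--\eqref{1.7a}; the microrotation $\tilde{\omega}$ plays no role here. Recall from Section~2 that $\tilde{\chi}=\tilde{u}/u_{+}=\rho_{+}/\tilde{\rho}$ solves \eqref{1.19}, i.e. $\lambda u_{+}\tilde{\chi}_{x}=F(\tilde{\chi})$, with $\tilde{\chi}(0)=u_{b}/u_{+}>1$ (using the existence condition $\chi_{c}u_{+}>u_{b}$ with $\chi_{c}=1$) and, by Lemma~\ref{lem.Va}(ii), $\tilde{\chi}$ monotone decreasing to $1$, so that $0<\tilde{\chi}(x)-1\le\tilde{\chi}(0)-1$ for all $x$; moreover $\tilde{u}-u_{+}=u_{+}(\tilde{\chi}-1)$ and $\tilde{u}_{x}=u_{+}\tilde{\chi}_{x}=F(\tilde{\chi})/\lambda$. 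First I would exploit the degeneracy $M_{+}=1$, which forces $\rho_{+}u_{+}^{2}=K\gamma\rho_{+}^{\gamma}$ and hence $F(\tilde{\chi})=K\rho_{+}^{\gamma}P(\tilde{\chi})$ with $P(\tilde{\chi}):=\tilde{\chi}^{-\gamma}-1+\gamma(\tilde{\chi}-1)$, a function with a double zero at $\tilde{\chi}=1$ and $P''(\tilde{\chi})=\gamma(\gamma+1)\tilde{\chi}^{-\gamma-2}$. Writing $P(\tilde{\chi})=\int_{1}^{\tilde{\chi}}\!\int_{1}^{s}P''(\tau)\,d\tau\,ds$ and using that $\tau\mapsto\tau^{-\gamma-2}$ is decreasing together with $1\le\tilde{\chi}\le\tilde{\chi}(0)$, one gets the sharp two-sided bound
\begin{equation*}
\frac{(\gamma+1)\rho_{+}u_{+}^{2}}{2}\,\tilde{\chi}(0)^{-\gamma-2}(\tilde{\chi}-1)^{2}\ \le\ F(\tilde{\chi})\ \le\ \frac{(\gamma+1)\rho_{+}u_{+}^{2}}{2}(\tilde{\chi}-1)^{2}.
\end{equation*}
Since $\tilde{\chi}(0)^{-\gamma-2}=(u_{+}/u_{b})^{\gamma+2}$ and $A=(\gamma+1)\rho_{+}/(2\lambda)$, dividing by $\lambda$ turns the left inequality into $\tilde{u}_{x}\ge A(u_{+}/u_{b})^{\gamma+2}(\tilde{u}-u_{+})^{2}>0$, which already contains the positivity claim.

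Next I would pin down the algebraic decay of $w:=\tilde{\chi}-1$ by integrating the Riccati-type equation it satisfies. From $\lambda u_{+}w_{x}=F(1+w)$ and $u_{+}<0$ one has $(1/w)_{x}=-w_{x}/w^{2}=F(1+w)/(\lambda|u_{+}|w^{2})$, which by the previous display lies in the interval $[\,A|u_{+}|(u_{+}/u_{b})^{\gamma+2},\ A|u_{+}|\,]$; integrating over $(0,x)$ and using $w(0)=\tilde{\chi}(0)-1=|u_{b}-u_{+}|/|u_{+}|$ yields
\begin{equation*}
\frac{w(0)}{1+A|u_{+}|w(0)x}\ \le\ w(x)\ \le\ \frac{w(0)}{1+A|u_{+}|(u_{+}/u_{b})^{\gamma+2}w(0)x}.
\end{equation*}
Because $A|u_{+}|w(0)=A|u_{b}-u_{+}|\le A\tilde{\delta}=B$, the left inequality gives $w(x)\ge w(0)/(1+Bx)$, hence $|\tilde{u}-u_{+}|=|u_{+}|w(x)\ge|u_{b}-u_{+}|/(1+Bx)$; inserting this into the lower bound for $\tilde{u}_{x}$ proves \eqref{stationary1}. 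For \eqref{stationary2} I would use the identity, valid when $M_{+}=1$, $\tilde{M}=|\tilde{u}|/\sqrt{p'(\tilde{\rho})}=\tilde{\chi}^{(\gamma+1)/2}$ (which follows from $p'(\tilde{\rho})=K\gamma\rho_{+}^{\gamma-1}\tilde{\chi}^{1-\gamma}=u_{+}^{2}\tilde{\chi}^{1-\gamma}$ and $|\tilde{u}|=|u_{+}|\tilde{\chi}$), so that $\tilde{M}-1=(1+w)^{(\gamma+1)/2}-1=\frac{\gamma+1}{2}w+O(w^{2})$; feeding in the two-sided bounds on $w$ just obtained and absorbing the remainder $O(w^{2})=O\!\big(\tilde{\delta}^{2}/(1+Bx)^{2}\big)$ into the correction term yields \eqref{stationary2}.

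The Taylor expansions of $P$ and of $(1+w)^{(\gamma+1)/2}$ and the integration of the Riccati equation are routine; the only delicate point is the bookkeeping of the constants, in particular that the lower bound for $F$ carries exactly $\tilde{\chi}(0)^{-\gamma-2}=(u_{+}/u_{b})^{\gamma+2}$ (from evaluating $\tau^{-\gamma-2}$ at the right endpoint of $[1,\tilde{\chi}(0)]$) and that $A|u_{+}|w(0)=A|u_{b}-u_{+}|$ is controlled by $B=A\tilde{\delta}$, which is what makes the rate come out as $(1+Bx)^{-1}$ with the stated prefactors. This is precisely the computation carried out in \cite{KNZhd} for the degenerate outflow problem of the isentropic Navier--Stokes equations, to which the present lemma reduces.
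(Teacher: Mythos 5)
Your proof reproduces the Kawashima--Nishibata--Zhu computation to which the paper delegates this lemma, and every step is sound: reducing to the scalar ODE $\lambda u_{+}\tilde{\chi}_{x}=F(\tilde{\chi})$ for the Navier--Stokes subsystem \eqref{1.6cdss}--\eqref{1.7a}; using $M_{+}=1$ to write $F=K\rho_{+}^{\gamma}P(\tilde{\chi})$ with $P(1)=P'(1)=0$ and $P''(\tau)=\gamma(\gamma+1)\tau^{-\gamma-2}$; extracting the two-sided quadratic bound on $F$ by freezing $\tau^{-\gamma-2}$ at the endpoints of $[1,\tilde{\chi}(0)]$, which is exactly how the factor $(u_{+}/u_{b})^{\gamma+2}=\tilde{\chi}(0)^{-\gamma-2}$ and the constant $A=(\gamma+1)\rho_{+}/(2\lambda)$ appear; integrating the Riccati-type equation for $w=\tilde{\chi}-1$; and using the identity $\tilde{M}=\tilde{\chi}^{(\gamma+1)/2}$ (valid precisely because $M_{+}=1$). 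The paper offers only the citation to \cite{KNZhd}, so you are filling in the details of the same route, not taking a different one.

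There is, however, a mismatch in the last step that you should flag rather than pass over. Since $w(0)=|u_{b}-u_{+}|/|u_{+}|$, your chain actually yields
\begin{equation*}
\tilde{u}_{x}\ \ge\ A\left(\frac{u_{+}}{u_{b}}\right)^{\gamma+2}\frac{|u_{b}-u_{+}|^{2}}{(1+Bx)^{2}},\qquad
\tilde{M}-1\ \ge\ \frac{(\gamma+1)|u_{b}-u_{+}|}{2|u_{+}|(1+Bx)}-C\frac{\tilde{\delta}^{2}}{(1+Bx)^{2}},
\end{equation*}
with $|u_{b}-u_{+}|$, not $\tilde{\delta}=\max\{|\omega_{b}|,|u_{b}-u_{+}|\}$, in the numerators. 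Since $|u_{b}-u_{+}|\le\tilde{\delta}$ and both $s\mapsto s/(1+Asx)$ and $s\mapsto s^{2}/(1+Asx)^{2}$ are increasing in $s$, the bounds \eqref{stationary1} and the left-hand inequality of \eqref{stationary2} as printed are strictly \emph{stronger} than what your (and KNZ's) estimates give whenever $|\omega_{b}|>|u_{b}-u_{+}|$, and do not follow from them. In \cite{KNZhd} the quantity playing the role of $\tilde{\delta}$ is $|u_{b}-u_{+}|$ itself (there is no $\omega$), so what you prove is the KNZ statement; to obtain the lemma literally one needs either the extra hypothesis $|\omega_{b}|\le|u_{b}-u_{+}|$, or to read $\tilde{\delta}$ and $B=\tilde{\delta}A$ in the lemma as $|u_{b}-u_{+}|$ and $A|u_{b}-u_{+}|$. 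Your closing sentence asserting that \eqref{stationary1}--\eqref{stationary2} follow verbatim is therefore not justified as written.
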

\begin{proof}
Please refer to \cite{KNZhd} for the detailed proof.
\end{proof}

\begin{lemma}\label{lem.V1}
There exists a positive constant $\varepsilon_{2}$ such that if
$N_{2}(T)+\tilde{\delta}\leq\varepsilon_{2}$, then
\begin{eqnarray}\label{1mnb}
\begin{aligned}[b]
  &(1+
t)^{\xi}\|[\varphi,\psi,\zeta](t)\|_{\alpha,B}^{2}+\tilde{\delta}^{2}\int_{0}^{t}(1+
\tau)^{\xi}
\|[\varphi,\psi](\tau)\|_{\alpha-2,B}^{2}d\tau+\tilde{\delta}\int_{0}^{t}(1+
\tau)^{\xi}
\|\zeta(\tau)\|_{\alpha-1,B}^{2}d\tau\\&+\int_{0}^{t}(1+
\tau)^{\xi}\|[\zeta,\partial_{x}[\psi,\zeta]](\tau)\|_{\alpha,B}^{2}d\tau+\int_{0}^{t}(1+
\tau)^{\xi}\varphi^{2}(0,\tau)d\tau\\
 \leq &C\|[\varphi_{0},\psi_{0},\zeta_{0}]\|_{\vartheta,B}^{2} +\xi\int_{0}^{t}(1+
\tau)^{\xi-1}\|[\varphi,\psi,\zeta](\tau)\|_{\alpha,B}^{2}d\tau
+C\tilde{\delta}\int_{0}^{t}(1+
\tau)^{\xi}\|\partial_{x}\varphi\|_{\alpha,B}^{2}(\tau)d\tau
\end{aligned}
\end{eqnarray}
holds for $\alpha \in [0,\vartheta]$ and $\xi\geq0$.
\end{lemma}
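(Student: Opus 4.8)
The plan is to derive a weighted basic energy identity for the transonic case, mirroring the structure of Lemma~\ref{lem.V1bnm} for the supersonic case but exploiting the improved lower bound on $\tilde{u}_x$ from Lemma~\ref{lem.Vavbc} in place of the weighted $\beta$-term. First I would recall the identity \eqref{basic*}, which is independent of the value of $M_+$, multiply it by the algebraic weight $W_{\alpha,B}=(1+Bx)^\alpha$, and integrate over $\R_+$. This produces, after using the boundary conditions $\psi(0,t)=\zeta(0,t)=0$, an equation of the same shape as \eqref{z.eng1*}: a time-derivative of the weighted energy $\int W_{\alpha,B}[\rho\Phi+\frac{\rho}{2}\psi^2+\frac{\rho}{2}\zeta^2]\,dx$, the good dissipation terms $\int W_{\alpha,B}[\mu\zeta^2+\lambda\psi_x^2+\nu\zeta_x^2]\,dx$, a boundary contribution $-\rho(0,t)u_b\Phi(\rho(0,t),\tilde\rho(0))\geq c\varphi^2(0,t)$, the weight-differentiation term $-\alpha B\int W_{\alpha-1,B}[\rho u\Phi+\tfrac{\rho u}{2}\psi^2+\tfrac{\rho u}{2}\zeta^2+(p(\rho)-p(\tilde\rho))\psi]\,dx$, and the source terms $J_1,J_2$ involving $\partial_x\tilde u$, $\partial_x\tilde\omega$ and $p(\tilde\rho)_x/\tilde\rho$.

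The crucial difference from the supersonic argument is how one extracts positivity. When $M_+=1$ the quadratic form coming from the convective and pressure terms degenerates (the matrix $M_1$ is only positive semi-definite at the sonic point), so the $\alpha B$ weight-derivative term alone no longer controls $[\varphi,\psi]$. Instead I would keep the term $-\int W_{\alpha,B}\,\partial_x\tilde u\,[\tilde u\varphi\psi+\rho\psi^2+p(\rho)-p(\tilde\rho)-p'(\tilde\rho)\varphi]\,dx$ in $J_1$ and, following the Kawashima--Nishibata--Zhu / Nakamura--Nishibata technique, show that its leading part is a negative-definite quadratic form in $[\varphi,\psi]$ with coefficient $\partial_x\tilde u$. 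Using the sharp lower bound $\tilde u_x(x)\geq A(u_+/u_b)^{\gamma+2}\tilde\delta^2/(1+Bx)^2$ from \eqref{stationary1} together with $B=\tilde\delta A$, this quadratic form dominates $c\,\tilde\delta^2\int (1+Bx)^{\alpha-2}(\varphi^2+\psi^2)\,dx = c\,\tilde\delta^2\|[\varphi,\psi]\|_{\alpha-2,B}^2$, which is exactly the second term on the left-hand side of \eqref{1mnb}. The damping term $\mu\zeta^2$ gives $c\|\zeta\|_{\alpha,B}^2$ directly, and I would separately peel off a piece $\tilde\delta\|\zeta\|_{\alpha-1,B}^2$ by combining the $\mu\zeta^2$ term with the weight-derivative $\zeta^2$ contribution (using $B=\tilde\delta A$), accounting for the third term on the left. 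All remaining contributions from $J_1$ involving $\partial_x\tilde\omega$ (which decays exponentially by Theorem~\ref{stationary}(ii)) and $p(\tilde\rho)_x/\tilde\rho$ (which is $O(\tilde\delta^2/(1+Bx)^2)$), as well as $J_2$, are absorbed: the exponentially-weighted pieces are handled by Lemma~\ref{lem.V}(i) and the algebraically-weighted cross terms by Cauchy--Schwarz against the dissipation terms and the $\tilde\delta^2\|[\varphi,\psi]\|_{\alpha-2,B}^2$ term, at the cost of the $C\tilde\delta\int(1+\tau)^\xi\|\partial_x\varphi\|_{\alpha,B}^2\,d\tau$ remainder on the right of \eqref{1mnb}. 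Finally I multiply the resulting differential inequality by $(1+t)^\xi$, integrate over $[0,t]$, and use $\frac{d}{dt}[(1+t)^\xi E]=(1+t)^\xi E_t+\xi(1+t)^{\xi-1}E$ to produce the $\xi\int(1+\tau)^{\xi-1}\|[\varphi,\psi,\zeta]\|_{\alpha,B}^2\,d\tau$ term; the equivalence of $\int W_{\alpha,B}[\rho\Phi+\frac{\rho}{2}\psi^2+\frac{\rho}{2}\zeta^2]\,dx$ with $\|[\varphi,\psi,\zeta]\|_{\alpha,B}^2$ via \eqref{3.7as}--\eqref{3.7ahus} closes the estimate. For the boundary case $\alpha=0$ I would instead integrate \eqref{basic*} directly (no weight) and use Lemma~\ref{lem.V}(i) and the boundary conditions, exactly as at the end of the proof of Lemma~\ref{lem.V1bnm}.

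The main obstacle I expect is the degeneracy at the sonic point: one must verify that, after completing the square, the quadratic form $-\partial_x\tilde u\,[\tilde u\varphi\psi+\rho\psi^2+p(\rho)-p(\tilde\rho)-p'(\tilde\rho)\varphi]$ is genuinely negative definite up to higher-order and small-$\tilde\delta$ corrections, and that its coefficient matches the $\tilde\delta^2/(1+Bx)^2$ rate dictated by \eqref{stationary1} so that the gain $\tilde\delta^2\|[\varphi,\psi]\|_{\alpha-2,B}^2$ is consistent with the bookkeeping of weights (note $\alpha-2\geq0$ requires $\vartheta\geq2$, which is precisely the hypothesis $\vartheta\in[2,\vartheta^*)$). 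A secondary technical point is ensuring the constant in the Cauchy--Schwarz splittings of the cross terms does not eat up the positive constant $c$ in front of $\tilde\delta^2\|[\varphi,\psi]\|_{\alpha-2,B}^2$; this forces $\tilde\delta$ (and $N_2(T)$) to be taken sufficiently small, which is built into the hypothesis $N_2(T)+\tilde\delta\leq\varepsilon_2$.
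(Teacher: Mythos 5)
Your proposal captures the overall shape of the argument (multiply the basic energy identity by $W_{\alpha,B}$, boundary term, exploit $\tilde u_x>0$, absorb the $\tilde\omega$-terms using exponential decay, time-weight at the end), but it misses the one algebraic step that makes the transonic estimate work, and without it the plan fails.

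The paper does not keep the source term
$-\partial_x\tilde u\,[\tilde u\varphi\psi+\rho\psi^2+p(\rho)-p(\tilde\rho)-p'(\tilde\rho)\varphi]-\frac{p(\tilde\rho)_x}{\tilde\rho}\varphi\psi$
as is. It first uses the stationary momentum balance $\eqref{1.6}_2$, which after dividing by $\tilde\rho$ gives $\tilde u\,\tilde u_x+\frac{p(\tilde\rho)_x}{\tilde\rho}=\frac{\lambda\tilde u_{xx}}{\tilde\rho}$, to combine the two $\varphi\psi$ cross terms into $-\frac{\lambda\tilde u_{xx}}{\tilde\rho}\varphi\psi$. This is exactly the passage from \eqref{basic*} to \eqref{basic*bv}. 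After that, the $\partial_x\tilde u$-term moved to the left-hand side is $\partial_x\tilde u\,[\rho\psi^2+p(\rho)-p(\tilde\rho)-p'(\tilde\rho)\varphi]$ with \emph{no} cross term, whose leading part $\rho_+\psi^2+\frac{p''(\rho_+)}{2}\varphi^2$ is positive definite (this is the paper's $G_2$), while the leftover cross term $K_2$ carries $\tilde u_{xx}=O(\tilde\delta^3/(1+Bx)^3)$ and is therefore one order in $\tilde\delta$ \emph{smaller} than the good terms, hence absorbable.

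Your plan, by contrast, keeps the bracket with the $\tilde u\varphi\psi$ cross term and claims its leading part is sign-definite. It is not: at the sonic point $u_+^2=p'(\rho_+)$, the $2\times2$ matrix of $\tilde u\varphi\psi+\rho\psi^2+\frac{p''(\tilde\rho)}{2}\varphi^2$ has determinant $\rho_+\frac{p''(\rho_+)}{2}-\frac{u_+^2}{4}=\frac{K\gamma\rho_+^{\gamma-1}(2\gamma-3)}{4}$, which is negative for the physically relevant range $1\le\gamma<3/2$. Moreover, you propose to treat $\frac{p(\tilde\rho)_x}{\tilde\rho}\varphi\psi$ as a small remainder to be absorbed by Cauchy--Schwarz; but $p(\tilde\rho)_x/\tilde\rho=O(\tilde\delta^2/(1+Bx)^2)$ with an $O(1)$ prefactor, i.e., it lives at exactly the same scale $\tilde\delta^2\|[\varphi,\psi]\|_{\alpha-2,B}^2$ as the positivity you want to extract, so smallness of $\tilde\delta$ does not help (both sides scale as $\tilde\delta^2$). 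Finally, you do not invoke the Kawashima--Nishibata--Zhu degenerate decomposition of the weight-derivative term $G_1$ into a semi-definite piece $(\sqrt{p'(\tilde\rho)}\varphi-\tilde\rho\psi)^2$ plus an $(\tilde M-1)$-weighted piece (via \eqref{stationary2}), nor the second weight-derivative term $G_3$; the paper needs the precise balance $G_1+G_2+G_3$ to produce $c\,\tilde\delta^2\|[\varphi,\psi]\|_{\alpha-2,B}^2$, and this is where the sharp constraint $\alpha<\vartheta^*$, i.e.\ $\alpha(\alpha-2)<\frac{4}{\gamma+1}$, enters. In short, the missing idea is the reduction \eqref{basic*}$\to$\eqref{basic*bv} using the stationary momentum equation; without it the quadratic form you rely on is not definite and the $p(\tilde\rho)_x/\tilde\rho$ contribution cannot be absorbed.
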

\begin{proof}
Using $\eqref{1.6}_{1}$, the equation \eqref{basic*} is rewritten to
\begin{eqnarray}\label{basic*bv}
\begin{aligned}[b]
&\left[\rho\Phi(\rho,\tilde{\rho})+\frac{\rho}{2}\psi^{2}+\frac{\rho}{2}\zeta^{2}\right]_{t}
+\left[\rho u\Phi(\rho,\tilde{\rho})+\frac{\rho
u}{2}\psi^{2}+\frac{\rho u}{2}\zeta^{2}+(p(\rho)-p(\tilde{\rho}))\psi-\lambda\psi\psi_{x}-\nu\zeta\zeta_{x}\right]_{x}\\&+\mu\zeta^{2}+\lambda\psi_{x}^{2}
+\nu\zeta_{x}^{2}+\pa_x\tilde{u}[\rho\psi^{2}+p(\rho)-p(\tilde{\rho})-p'(\tilde{\rho})\varphi]
=-\pa_x\tilde{\omega}(\tilde{u}\varphi\zeta+\rho\psi\zeta)-\frac{\lambda\pa_x^{2}\tilde{u}}{\tilde{\rho}}\varphi\psi.
\end{aligned}
\end{eqnarray}
Multiplying \eqref{basic*bv} by $W_{\alpha,B}:=(1+Bx)^{\alpha}$ defined in Notation,
then we integrate the resulting equality over $\mathbb{R}_{+}$ to
get
\begin{equation}\label{z.eng1*12}
\begin{aligned}[b]
\frac{d}{dt}&\int_{\R_{+}}W_{\alpha,B}\left[\rho\Phi(\rho,\tilde{\rho})+\frac{\rho}{2}\psi^{2}+\frac{\rho}{2}\zeta^{2}\right]dx
+\int_{\R_{+}}W_{\alpha,B}\left[\mu\zeta^{2}+\lambda\psi_{x}^{2}
+\nu\zeta_{x}^{2}\right]dx\\&
-\rho(0,t)u_{b}\Phi(\rho(0,t),\tilde{\rho}(0))+\underbrace{\alpha B\int_{\R_{+}}
W_{\alpha-1,B}\left[-\rho u\Phi(\rho,\tilde{\rho})-\frac{\rho
u}{2}\psi^{2}-(p(\rho)-p(\tilde{\rho}))\psi\right]}_{G_{1}}dx\\&
\underbrace{+\int_{\R_{+}}W_{\alpha,B}\pa_x\tilde{u}\left[\rho\psi^{2}+p(\rho)-p(\tilde{\rho})-p'(\tilde{\rho})\varphi\right]
dx}_{G_{2}}\underbrace{-\frac{\lambda\alpha(\alpha-1)}{2} B^{2}\int_{\R_{+}}
W_{\alpha-2,B}\psi^{2}dx}_{G_{3}}\\&
\underbrace{-\frac{\alpha B}{2}\int_{\R_{+}}
W_{\alpha-1,B}\rho u\zeta^{2}dx}_{G_{4}}
\underbrace{-\frac{\nu\alpha(\alpha-1)}{2} B^{2}\int_{\R_{+}}
W_{\alpha-2,B}\zeta^{2}dx}_{G_{5}}\\
=&\underbrace{-\int_{\R_{+}}W_{\alpha,B}\pa_x\tilde{\omega}(\tilde{u}\varphi\zeta+\rho\psi\zeta)dx}_{K_{1}}
-\underbrace{\int_{\R_{+}}W_{\alpha,B}\frac{\lambda\pa_x^{2}\tilde{u}}{\tilde{\rho}}\varphi\psi dx}_{K_{2}},
\end{aligned}
\end{equation}
where we have used integration by parts and boundary condition \eqref{NSfvP.ptid}.

It is easy to obtain that
\begin{equation*}\label{Q95-2}
\begin{aligned}[b]
-\rho(0,t)u_{b}\Phi(\rho(0,t),\tilde{\rho}(0))\geq
c\varphi(0,t)^{2}.
\end{aligned}
\end{equation*}
Using Lemma \ref{lem.Vavbc} and  decomposing $\rho$
as $\rho=\varphi+(\tilde{\rho}-\rho_{+})+\rho_{+},$ $u$ as
$u=\psi+(\tilde{u}-u_{+})+u_{+}$, we have the following estimates:
\begin{equation}\label{z.eng1*1}
\begin{aligned}[b]
G_{1}\geq& \alpha B\int_{\R_{+}}
W_{\alpha-1,B}\left[\left(\frac{p'(\rho_{+})^{\frac{3}{2}}}{2\rho_{+}}\varphi^{2}
+\frac{\rho_{+}\sqrt{p'(\rho_{+})}}{2}\psi^{2}\right)(\tilde{M}-1)
+\frac{p'(\tilde{\rho})}{2\tilde{\rho}}(\sqrt{p'(\tilde{\rho})}\varphi-\tilde{\rho}\psi)^{2}\right]dx\\
&-C\alpha B\|(1+Bx)[\varphi,\psi]\|_{L^{\infty}}\|[\varphi,\psi]\|_{\alpha-2,B}^{2}-C\tilde{\delta}\alpha B\int_{\R_{+}}
W_{\alpha-1,B}(\varphi^{2}+\psi^{2})(\tilde{M}-1)dx\\
\geq&\alpha B\int_{\R_{+}}
W_{\alpha-1,B}\left(\frac{p'(\rho_{+})^{\frac{3}{2}}}{2\rho_{+}}\varphi^{2}
+\frac{\rho_{+}\sqrt{p'(\rho_{+})}}{2}\psi^{2}\right)\left(\frac{(\gamma+1)\tilde{\delta}}{2|u_{+}|(1+Bx)}-C\frac{\tilde{\delta}^{2}}{(1+Bx)^{2}}\right)
dx\\
&-C\alpha B\|(1+Bx)[\varphi,\psi]\|_{L^{\infty}}\|[\varphi,\psi]\|_{\alpha-2,B}^{2}-C\tilde{\delta}^{2}\alpha B\|[\varphi,\psi]\|_{\alpha-2,B}^{2}\\
\geq&\frac{(\gamma+1)\alpha A}{2|u_{+}|}\tilde{\delta}^{2}\int_{\R_{+}}
W_{\alpha-2,B}\left(\frac{p'(\rho_{+})^{\frac{3}{2}}}{2\rho_{+}}\varphi^{2}
+\frac{\rho_{+}\sqrt{p'(\rho_{+})}}{2}\psi^{2}\right)
dx\\&-C\tilde{\delta}(N_{2}(T)+\tilde{\delta}^{2})\|[\varphi,\psi]\|_{\alpha-2,B}^{2},
\end{aligned}
\end{equation}

\begin{equation}\label{z.eng1bv*}
\begin{aligned}[b]
G_{2}\geq&
A\left(\frac{u_{+}}{u_{b}}\right)^{\gamma+2}\tilde{\delta}^{2}\int_{\R_{+}}W_{\alpha-2,B}\left[\rho\psi^{2}+p(\rho)-p(\tilde{\rho})-p'(\tilde{\rho})\varphi\right]
dx\\
\geq&
A\left(\frac{u_{+}}{u_{b}}\right)^{\gamma+2}\tilde{\delta}^{2}\int_{\R_{+}}W_{\alpha-2,B}\left(\rho_{+}\psi^{2}+\frac{p''(\rho_{+})}{2}\varphi^{2}\right)
dx-C\tilde{\delta}^{2}(\|\varphi\|_{L^{\infty}}+\tilde{\delta})\|[\varphi,\psi]\|_{\alpha-2,B}^{2}
\\ \geq&
A\left(\frac{u_{+}}{u_{b}}\right)^{\gamma+2}\tilde{\delta}^{2}\int_{\R_{+}}W_{\alpha-2,B}\left(\rho_{+}\psi^{2}+\frac{p''(\rho_{+})}{2}\varphi^{2}\right)
dx-C\tilde{\delta}^{2}(N_{2}(T)+\tilde{\delta})\|[\varphi,\psi]\|_{\alpha-2,B}^{2}.
\end{aligned}
\end{equation}
For $\alpha\in(0,\vartheta]$, we obtain the lower estimate of $G_{1}+G_{2}+G_{3}$ as
\begin{equation}\label{z.eng1mbvnj*}
\begin{aligned}[b]
&G_{1}+G_{2}+G_{3}\\ \geq&
A\tilde{\delta}^{2}\int_{\R_{+}}W_{\alpha-2,B}\left[\frac{(\gamma+1)\alpha \rho_{+}\sqrt{p'(\rho_{+})}}{4|u_{+}|}+\rho_{+}\left(\frac{u_{+}}{u_{b}}\right)^{\gamma+2}-\frac{\lambda\alpha(\alpha-1)}{2} A\right]\psi^{2}
dx\\&+A\tilde{\delta}^{2}\int_{\R_{+}}W_{\alpha-2,B}\left[\frac{(\gamma+1)\alpha p'(\rho_{+})^{\frac{3}{2}} }{4\rho_{+}|u_{+}|}+\frac{p''(\rho_{+})}{2}\left(\frac{u_{+}}{u_{b}}\right)^{\gamma+2}\right]\varphi^{2}
dx\\&-C\tilde{\delta}(N_{2}(T)+\tilde{\delta}N_{2}(T)+\tilde{\delta}^{2})\|[\varphi,\psi]\|_{\alpha-2,B}^{2}\\
=&A\rho_{+}\tilde{\delta}^{2}\int_{\R_{+}}W_{\alpha-2,B}\left[\left(\frac{u_{+}}{u_{b}}\right)^{\gamma+2}-\frac{\alpha(\alpha-2)(\gamma+1)}{4}\right]\psi^{2}
dx\\&+A\tilde{\delta}^{2}\int_{\R_{+}}W_{\alpha-2,B}\left[\frac{(\gamma+1)\alpha p'(\rho_{+})^{\frac{3}{2}} }{4\rho_{+}|u_{+}|}+\frac{p''(\rho_{+})}{2}\left(\frac{u_{+}}{u_{b}}\right)^{\gamma+2}\right]\varphi^{2}
dx\\&-C\tilde{\delta}(N_{2}(T)+\tilde{\delta}N_{2}(T)+\tilde{\delta}^{2})\|[\varphi,\psi]\|_{\alpha-2,B}^{2}\\
\geq&\frac{A\rho_{+}(\gamma+1)}{4}\tilde{\delta}^{2}\int_{\R_{+}}W_{\alpha-2,B}\left[\frac{4}{\gamma+1}-\alpha(\alpha-2)\right]\psi^{2}
dx\\&+A\tilde{\delta}^{2}\int_{\R_{+}}W_{\alpha-2,B}\left[\frac{(\gamma+1)\alpha p'(\rho_{+})^{\frac{3}{2}} }{4\rho_{+}|u_{+}|}+\frac{p''(\rho_{+})}{2}\left(\frac{u_{+}}{u_{b}}\right)^{\gamma+2}\right]\varphi^{2}
dx\\&-C\tilde{\delta}(N_{2}(T)+\tilde{\delta}N_{2}(T)+\tilde{\delta}^{2})\|[\varphi,\psi]\|_{\alpha-2,B}^{2}
\\
\geq &c\tilde{\delta}^{2}\|[\varphi,\psi]\|_{\alpha-2,B}^{2}-C\tilde{\delta}(N_{2}(T)+\tilde{\delta}N_{2}(T)+\tilde{\delta}^{2})\|[\varphi,\psi]\|_{\alpha-2,B}^{2},
\end{aligned}
\end{equation}
where we have used  Lemma \ref{lem.Vavbc}, $\alpha<\vartheta^{*}$ and $M_{+}=1$.

Similarly, we have
\begin{equation}\label{z.eng1mbvnnbj*}
\begin{aligned}[b]
&G_{4}+G_{5}\\ \geq&
\frac{\alpha A\rho_{+} |u_{+}|}{2}\tilde{\delta}\|\zeta\|_{\alpha-1,B}^{2}
-\frac{\nu\alpha(\alpha-1)A^{2}}{2} \tilde{\delta}^{2}\|\zeta\|_{\alpha-2,B}^{2}
-C\tilde{\delta}(N_{2}(T)+\tilde{\delta})\|\zeta\|_{\alpha-1,B}^{2}\\
\geq&
\frac{\alpha A\rho_{+} |u_{+}|}{2}\tilde{\delta}\|\zeta\|_{\alpha-1,B}^{2}
-C\tilde{\delta}(N_{2}(T)+\tilde{\delta})\|\zeta\|_{\alpha-1,B}^{2}.
\end{aligned}
\end{equation}

Using Poincar\'{e}
type inequality \eqref{p.ine} and \eqref{inequality21g6}, we have
\begin{equation}\label{z.eng1mbvnnbj*}
\begin{aligned}[b]
K_{1}\leq& C\tilde{\delta}\int_{\R_{+}}W_{\alpha,B}e^{-\sigma x}(\varphi^{2}+\psi^{2}+\zeta^{2})dx\\
\leq& C\tilde{\delta}\int_{\R_{+}}W_{\alpha,B}e^{-\sigma x}\left[\varphi(0,t)^{2}+x\|\partial_{x}[\varphi,\psi,\zeta]\|^{2}\right]dx\\
\leq& C\tilde{\delta}\varphi(0,t)^{2}+C\tilde{\delta}\|\pa_x[\varphi,\psi,\zeta]\|^{2}
\leq C\tilde{\delta}\varphi(0,t)^{2}+C\tilde{\delta}\|\pa_x[\varphi,\psi,\zeta]\|_{\alpha,B}^{2},
\end{aligned}
\end{equation}
\begin{equation}\label{z.enbgg1mbvnnbj*}
\begin{aligned}[b]
K_{2}\leq C\tilde{\delta}^{3}\|[\varphi,\psi]\|_{\alpha-2,B}^{2}.
\end{aligned}
\end{equation}

Inserting the above estimations into $\eqref{z.eng1*12}$ and then
taking $N_{2}(T)$ and $\tilde{\delta}$ suitably small to satisfy $N_{2}(T)\ll\tilde{\delta}^{2}$ and
$\tilde{\delta}\ll1$, we obtain
\begin{eqnarray}\label{3.44ewcv}
\begin{aligned}[b]
\frac{d}{dt}&\int_{\R_{+}}W_{\alpha,B}\left[\rho\Phi(\rho,\tilde{\rho})+\frac{\rho}{2}\psi^{2}+\frac{\rho}{2}\zeta^{2}\right] dx +c\varphi(0,t)^{2}
+c\tilde{\delta}^{2}\|[\varphi,\psi]\|_{\alpha-2,B}^{2}\\&
+c\tilde{\delta}\|\zeta\|_{\alpha-1,B}^{2}+c\|[\zeta,\pa_{x}[\psi,\zeta]]\|_{\alpha,B}^{2} \leq
C\tilde{\delta}\|\partial_{x}\varphi\|_{\alpha,B}^{2}.
\end{aligned}
\end{eqnarray}
Multiplying $\eqref{3.44ewcv}$ by $(1+t)^{\xi}$ and integrating in
$\tau$ over $[0,t]$ for any $0\leq t \leq T$, we have the
desired estimate \eqref{1mnb} for $\alpha \in (0,\vartheta]$.
Next, we prove \eqref{1mnb} holds for $\alpha=0$.
Multiplying \eqref{basic*bv} by  $(1+t)^{\xi}$ and integrating the resulting
equality  over $\R_{+}\times (0,t)$, we have
\begin{eqnarray}\label{1mvbcnb}
\begin{aligned}[b]
  &(1+
t)^{\xi}\|[\varphi,\psi,\zeta](t)\|^{2}+\int_{0}^{t}(1+
\tau)^{\xi}\|[\zeta,\partial_{x}[\psi,\zeta]](\tau)\|^{2}d\tau+\int_{0}^{t}(1+
\tau)^{\xi}\varphi^{2}(0,\tau)d\tau\\
 \leq &C\|[\varphi_{0},\psi_{0},\zeta_{0}]\|^{2} +\xi\int_{0}^{t}(1+
\tau)^{\xi-1}\|[\varphi,\psi,\zeta](\tau)\|^{2}d\tau
\\&+C\int_{0}^{t}(1+
\tau)^{\xi}\int_{\R_{+}}|\pa_x\tilde{\omega}|(|\varphi\zeta|+|\psi\zeta|)dxd\tau
+C\int_{0}^{t}(1+
\tau)^{\xi}\int_{\R_{+}}|\pa_x^{2}\tilde{u}||\varphi\psi| dxd\tau,
\end{aligned}
\end{eqnarray}
where we have used the fact that $\pa_x\tilde{u}[\rho\psi^{2}+p(\rho)-p(\tilde{\rho})-p'(\tilde{\rho})\varphi]\geq 0 $ holds.
Applying Lemma
\ref{lem.V} to
the third term and fourth term on the right-hand side of \eqref{1mvbcnb} with the aid of \eqref{inequality21g6}, we obtain the estimate \eqref{1mnb} for the case of $\alpha=0$.
\end{proof}

\begin{lemma}\label{lem.V12}
There exists a positive constant $\varepsilon_{2}$ such that if
$N_{2}(T)+\tilde{\delta}\leq\varepsilon_{2}$, then
\begin{eqnarray}\label{2mvbc}
\begin{aligned}[b]
  &(1+
t)^{\xi}\| \pa_{x}\varphi\|_{\alpha,B}^{2}+\int_{0}^{t}(1+
\tau)^{\xi}\|\partial_{x}\varphi\|_{\alpha,B}^{2}d\tau+\int_{0}^{t}(1+
\tau)^{\xi}(\pa_x\varphi)^{2}(0,\tau)d\tau\\
 \leq &C \left(\|[\varphi_{0},\psi_{0},\zeta_{0}]\|_{\vartheta,B}^{2}
+\|\partial_{x}\varphi_{0}\|_{\vartheta,B}^{2}\right) +\xi\int_{0}^{t}(1+
\tau)^{\xi-1}\left(\|[\varphi,\psi,\zeta](\tau)\|_{\alpha,B}^{2}+\|\partial_{x}\varphi\|_{\alpha,B}^{2}\right)d\tau
\\ &+CN_{2}(T)\int_{0}^{t}(1+\tau)^{\xi}\|\pa^{2}_x\psi\|_{\alpha,B}^{2}d\tau
\end{aligned}
\end{eqnarray}
holds for $\alpha \in [0,\vartheta]$ and $\xi\geq0$.
\end{lemma}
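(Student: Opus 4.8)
\textbf{Proof proposal for Lemma \ref{lem.V12}.}
The plan is to reproduce, at the weighted level, the argument of Lemma \ref{lem.V1bnmv}, taking the weight $W_{\alpha,B}=(1+Bx)^{\alpha}$ into account. First I would differentiate $\eqref{NSP.pt}_1$ in $x$, multiply by $\lambda W_{\alpha,B}\pa_x\varphi/\rho^2$, and multiply $\eqref{NSP.pt}_2$ by $W_{\alpha,B}\pa_x\varphi/\rho$; adding the two and integrating over $\R_+$ produces a weighted analogue of $\eqref{sum.d1}$, namely
\begin{equation*}
\begin{aligned}[b]
\frac{d}{dt}\int_{\R_{+}}W_{\alpha,B}\left[\psi\pa_x\varphi+\lambda\frac{(\pa_x\varphi)^2}{2\rho^2}\right]dx
+\int_{\R_{+}}W_{\alpha,B}\frac{p'(\tilde{\rho})}{\rho}(\pa_x\varphi)^2dx
=\sum_l \tilde{J}_l,
\end{aligned}
\end{equation*}
where the $\tilde{J}_l$ are the weighted counterparts of $J_3$--$J_{16}$, together with extra commutator terms of the form $\alpha B\int W_{\alpha-1,B}(\cdots)$ coming from $\pa_x W_{\alpha,B}=\alpha B W_{\alpha-1,B}$. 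The boundary term again supplies a good sign: integrating the term $-\lambda u(\pa_x\varphi)^2/\rho^2$ by parts gives $-\tfrac{\lambda|u_b|}{2\rho(0,t)^2}(\pa_x\varphi)^2(0,t)$ plus bulk terms, which is exactly the dissipation on $(\pa_x\varphi)^2(0,t)$ that appears on the left of $\eqref{2mvbc}$.

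Next I would bound each $\tilde{J}_l$ in the weighted norm. The terms structurally identical to those in Lemma \ref{lem.V1bnmv} are handled the same way: Sobolev, Young and Cauchy--Schwarz with small parameter $\eta$, using $\eqref{inequality21g6}$ (Theorem \ref{stationary}(ii)) for the decay of $\tilde{u}-u_+$, $\tilde{\rho}-\rho_+$ and their derivatives, and Lemma \ref{lem.V}(ii) (with $k>1$, here playing the role that Lemma \ref{lem.V}(i) played in the supersonic case) to convert weighted $e^{-\sigma x}$-type integrals of $[\varphi,\psi,\zeta]$ into $\tilde{\delta}\varphi^2(0,t)+\tilde{\delta}\|\pa_x[\varphi,\psi]\|_{\alpha,B}^2$. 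The genuinely new commutator terms $\alpha B\int W_{\alpha-1,B}(\cdots)$ are absorbed because $B=\tilde{\delta}A$ is small and $W_{\alpha-1,B}\le W_{\alpha,B}$, so each such term is $\le C\tilde{\delta}\|\pa_x\varphi\|_{\alpha,B}^2$ plus lower-order weighted pieces already present on the left. After choosing $\eta$, $N_2(T)$ and $\tilde{\delta}$ small, one obtains the differential inequality
\begin{equation*}
\begin{aligned}[b]
\frac{d}{dt}\int_{\R_{+}}W_{\alpha,B}\left[\psi\pa_x\varphi+\lambda\frac{(\pa_x\varphi)^2}{2\rho^2}\right]dx
+\|\pa_x\varphi\|_{\alpha,B}^2+(\pa_x\varphi)^2(0,t)
\le C\|\pa_x\psi\|_{\alpha,B}^2+CN_2(T)\|\pa_x^2\psi\|_{\alpha,B}^2+C\varphi^2(0,t).
\end{aligned}
\end{equation*}

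Finally I would multiply this by $(1+t)^{\xi}$, integrate over $[0,t]$, and control the time-boundary and remainder terms: the $(1+t)^{\xi}\psi\pa_x\varphi$ piece is absorbed by $\tfrac14(1+t)^{\xi}\|\pa_x\varphi\|_{\alpha,B}^2$ and $C(1+t)^{\xi}\|\psi\|_{\alpha,B}^2$; the $\varphi^2(0,t)$ and $\|\pa_x\psi\|_{\alpha,B}^2$, $\|\zeta,\pa_x[\psi,\zeta]\|_{\alpha,B}^2$ contributions are absorbed using $\eqref{1mnb}$ of Lemma \ref{lem.V1}; the time-derivative-of-weight term generated by $\tfrac{d}{dt}(1+t)^{\xi}$ produces exactly the $\xi\int_0^t(1+\tau)^{\xi-1}(\cdots)d\tau$ on the right of $\eqref{2mvbc}$; and the $N_2(T)\|\pa_x^2\psi\|_{\alpha,B}^2$ term is kept as is, to be eliminated later when this estimate is combined with the second-order estimate. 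I expect the main obstacle to be bookkeeping the new weight-derivative commutators and verifying that each is genuinely of size $O(\tilde{\delta})$ times a quantity already on the left-hand side — in particular making sure the $\|[\varphi,\psi]\|_{\alpha-2,B}^2$-type leftovers are dominated by the $\tilde{\delta}^2\|[\varphi,\psi]\|_{\alpha-2,B}^2$ dissipation produced in Lemma \ref{lem.V1}, which forces the same smallness hierarchy $N_2(T)\ll\tilde{\delta}^2\ll1$ used there.
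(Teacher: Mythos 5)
Your proposal follows essentially the same route as the paper: multiply the weighted sum of the two differentiated identities by $W_{\alpha,B}$, absorb the weight-derivative commutators (size $O(B)=O(\tilde\delta)$), obtain the weighted differential inequality, then time-integrate with $(1+t)^{\xi}$ and close via Lemma~\ref{lem.V1}, keeping the $N_2(T)\|\pa_x^2\psi\|_{\alpha,B}^2$ term for the next lemma. One small slip in the narrative: in the transonic case $\tilde u-u_+,\tilde\rho-\rho_+$ and their derivatives decay \emph{algebraically} (Theorem~\ref{stationary}(ii)), not like $e^{-\sigma x}$, so the corresponding source terms are controlled directly by the weight structure as $\tilde\delta^2\|[\varphi,\psi]\|_{\alpha-2,B}^2$ (the paper's right-hand side) rather than via $e^{-\sigma x}$-type arguments; this cosmetic discrepancy does not affect the closure since both $\varphi^2(0,t)$ and $\tilde\delta^2\|[\varphi,\psi]\|_{\alpha-2,B}^2$ are absorbed by \eqref{1mnb}.
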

\begin{proof}
Multiplying the summation of  \eqref{d.rho.ip3} and \eqref{tu1.ip1} by $W_{\alpha,B}$, and then taking integration over $\R_{+}$ further imply
\begin{equation}\label{sum.d1vcf}
\begin{aligned}[b]
\frac{d}{dt}&\int_{\R_{+}}W_{\alpha,B}\left[\psi \pa_x\varphi+\lambda
\frac{(\pa_x\varphi)^{2}}{2\rho^{2}}\right]dx
+\int_{\R_{+}}W_{\alpha,B}
\frac{p'(\tilde{\rho})}{\rho}(\pa_x\varphi)^{2}dx\\
=&\int_{\R_{+}}W_{\alpha,B}\psi \pa_t\pa_x\varphi dx
-\lambda\int_{\R_{+}}W_{\alpha,B}(\pa_x\varphi)^{2}\rho^{-3}\partial_{t}\rho
dx-\int_{\R_{+}}W_{\alpha,B}\pa_x
[p(\rho)-p(\tilde{\rho})-p'(\tilde{\rho})\varphi]\frac{\pa_x\varphi}{\rho}dx\\&
-\int_{\R_{+}}W_{\alpha,B}
\frac{\pa_xp'(\tilde{\rho})}{\rho}\varphi\pa_x\varphi dx
-\int_{\R_{+}}W_{\alpha,B}u\pa_x\psi \pa_x\varphi
dx-\int_{\R_{+}}W_{\alpha,B}\frac{\tilde{u}\pa_x\tilde{u}}{\rho}\varphi
\pa_x\varphi dx\\&-\int_{\R_{+}}W_{\alpha,B}\pa_x\tilde{u}\psi\pa_x\varphi dx
-\lambda\int_{\R_{+}}W_{\alpha,B}\pa_xu\frac{(\pa_x\varphi)^{2}}{\rho^{2}}dx
-\lambda\int_{\R_{+}}W_{\alpha,B}u\frac{\pa_x\varphi\pa^{2}_x\varphi}{\rho^{2}}
dx\\&-\lambda\int_{\R_{+}}W_{\alpha,B} \frac{\pa_x\varphi}{\rho^{2}}\pa_x\rho\pa_x\psi
dx-\lambda\int_{\R_{+}}W_{\alpha,B}\pa^{2}_x\tilde{u}\varphi\frac{\pa_x\varphi}{\rho^{2}}
dx-\lambda
\int_{\R_{+}}W_{\alpha,B}\pa_x\tilde{\rho}\pa_x\psi\frac{\pa_x\varphi}{\rho^{2}}
dx\\&-\lambda\int_{\R_{+}}W_{\alpha,B}\pa^{2}_x\tilde{\rho}\psi\frac{\pa_x\varphi}{\rho^{2}}
dx-\lambda\int_{\R_{+}}W_{\alpha,B}\pa_x\tilde{u}\frac{(\pa_x\varphi)^{2}}{\rho^{2}}
dx=\sum\limits_{l=3}^{16}K_{l},
\end{aligned}
\end{equation}
where $K_l$ $(3\leq l\leq 16)$  denote the
corresponding terms on the left of \eqref{sum.d1vcf}.

Applying Sobolev's inequality, Young's inequality and
Cauchy-Schwarz's inequality with $0<\eta<1$ and using Theorem \ref{stationary}(ii), one has
\begin{equation*}
\begin{aligned}[b]
K_3=&\int_{\R_{+}}W_{\alpha,B}\pa_x\psi \pa_x(\rho u-\tilde{\rho}
\tilde{u})dx+\int_{\R_{+}}\pa_x(W_{\alpha,B})\psi \pa_x(\rho u-\tilde{\rho}
\tilde{u})dx\\
\leq&(N_{2}(T)+\eta)\|\pa_x\varphi\|_{\alpha,B}^{2}
+(C+C_{\eta})\tilde{\delta}^{2}\|[\varphi,\psi]\|_{\alpha-2,B}^{2}+(C+C_{\eta}+N_{2}(T))\|\pa_x\psi\|_{\alpha,B}^{2},
\end{aligned}
\end{equation*}
\begin{equation*}
\begin{aligned}[b]
&K_4+K_{10}+K_{11}\\=&
-\frac{\lambda|u_{b}|}{2\rho(0,t)^{2}}(\pa_x\varphi)^{2}(0,t)
+\frac{\lambda}{2}\int_{\R_{+}}W_{\alpha,B}\pa_x
\tilde{u}(\pa_x\varphi)^{2}\rho^{-2}dx\\&+\frac{\lambda}{2}\int_{\R_{+}}W_{\alpha,B}\pa_x
\psi(\pa_x\varphi)^{2}\rho^{-2}dx+\frac{\lambda}{2}\int_{\R_{+}}\pa_x(W_{\alpha,B})u(\pa_x\varphi)^{2}\rho^{-2}dx\\
\leq&-\frac{\lambda|u_{b}|}{2\rho(0,t)^{2}}(\pa_x\varphi)^{2}(0,t)
+C\tilde{\delta}\|\pa_x\varphi\|_{\alpha,B}^{2}+C\|\pa_x\psi\|_{H^{1}}\|\pa_x\varphi\|_{\alpha,B}^{2}
\\
\leq&-\frac{\lambda|u_{b}|}{2\rho(0,t)^{2}}(\pa_x\varphi)^{2}(0,t)
+C(\tilde{\delta}+N_{2}(T))\|\pa_x\varphi\|_{\alpha,B}^{2}+CN_{2}(T)\|\pa^{2}_x\psi\|_{\alpha,B}^{2},
\end{aligned}
\end{equation*}

\begin{equation*}
\begin{aligned}[b]
&|K_{7}|+|K_{12}|+|K_{14}|+|K_{16}|\\ \leq &
(\eta+C\tilde{\delta}+CN_{2}(T))\|\pa_x\varphi\|_{\alpha,B}^{2}
+(C_{\eta}+C\tilde{\delta})\|\pa_x\psi\|_{\alpha,B}^{2}
+CN_{2}(T)\|\pa^{2}_x\psi\|_{\alpha,B}^{2},
\end{aligned}
\end{equation*}
\begin{equation*}
\begin{aligned}[b]
&|K_{5}|+|K_{6}|+|K_{8}|+|K_{9}|+|K_{13}|+|K_{15}| \\ \leq&
C(N_{2}(T)+\tilde{\delta})\|\pa_x[\varphi,\psi]\|_{\alpha,B}^{2}
+C\tilde{\delta}^{2}\|[\varphi,\psi]\|_{\alpha-2,B}^{2}.
\end{aligned}
\end{equation*}

Inserting the above estimates for $K_l$ $(3\leq l\leq 16)$ into
\eqref{sum.d1vcf} and then choosing $N_{2}(T),\tilde{\delta}$ and
$\eta$ suitably small, we obtain
\begin{equation}\label{1.eng1mnl}
\begin{aligned}[b]
&\frac{d}{dt}\int_{\R_{+}}W_{\alpha,B}\left[\psi
\pa_x\varphi+\lambda\frac{(\pa_x\varphi)^{2}}{2\rho^{2}} \right]dx
+\|\pa_{x}\varphi\|_{\alpha,B}^{2}
+(\pa_x\varphi)^{2}(0,t) \\
\leq & C\|\pa_x\psi\|_{\alpha,B}^{2}
+CN_{2}(T)\|\pa^{2}_x\psi\|_{\alpha,B}^{2}+C\tilde{\delta}^{2}\|[\varphi,\psi]\|_{\alpha-2,B}^{2}.
\end{aligned}
\end{equation}

Multiplying $\eqref{1.eng1mnl}$ by $(1+t)^{\xi}$ and integrating in
$\tau$ over $[0,t]$ for any $0\leq t \leq T$, using \eqref{1mnb} and
Cauchy-Schwarz's inequality, we have the
desired estimate \eqref{2mvbc} for $\alpha \in (0,\vartheta]$.
 Then we can prove \eqref{2mvbc} holds for $\alpha=0$ with the help of Lemma
\ref{lem.V} and Sobolev's inequality.
\end{proof}

\begin{lemma}\label{lem.V12gh}
There exists a positive constant $\varepsilon_{2}$ such that if
$N_{2}(T)+\tilde{\delta}\leq\varepsilon_{2}$, then
\begin{eqnarray}\label{3mcd}
\begin{aligned}[b]
  &(1+
t)^{\xi}\| \pa_{x}[\psi,\zeta](t)\|_{\alpha,B}^{2}+\int_{0}^{t}(1+
\tau)^{\xi}\|\partial_{x}^{2}[\psi,\zeta]\|_{\alpha,B}^{2}d\tau\\
 \leq &C \left(\|[\varphi_{0},\psi_{0},\zeta_{0}]\|_{\vartheta,B}^{2}
+\|\partial_{x}[\varphi_{0},\psi_{0},\zeta_{0}]\|_{\vartheta,B}^{2}\right)\\
&
 +\xi\int_{0}^{t}(1+
\tau)^{\xi-1}\left(\|[\varphi,\psi,\zeta](\tau)\|_{\alpha,B}^{2}
+\|\partial_{x}[\varphi,\psi,\zeta]\|_{\alpha,B}^{2}\right)d\tau
\end{aligned}
\end{eqnarray}
holds for $\alpha \in [0,\vartheta]$ and $\xi\geq0$.
\end{lemma}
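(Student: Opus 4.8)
The plan is to reproduce the three-step scheme behind Lemma~\ref{lem.V1bnmvhj}, but now carrying the algebraic weight $W_{\alpha,B}=(1+Bx)^{\alpha}$ through every integration by parts, and using the stationary bounds of Theorem~\ref{stationary}(ii) and Lemma~\ref{lem.Vavbc} in place of those of Theorem~\ref{stationary}(i).

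First I would multiply $\eqref{NSP.pt}_{2}$ by $-W_{\alpha,B}\pa^{2}_x\psi/\rho$ and integrate over $\R_{+}$. Integrating by parts in the temporal term produces $\frac12\frac{d}{dt}\int_{\R_{+}}W_{\alpha,B}(\pa_x\psi)^{2}dx$, the boundary contribution at $x=0$ vanishing because $\pa_t\psi(0,t)=0$; the viscous term produces the good dissipation $\lambda\int_{\R_{+}}W_{\alpha,B}(\pa^{2}_x\psi)^{2}\rho^{-1}dx$; and there remain the weighted analogues of the terms $J_{17}$--$J_{20}$ of \eqref{2.eng1}, together with one new term $\alpha B\int_{\R_{+}}W_{\alpha-1,B}\pa_t\psi\,\pa_x\psi\,dx$ coming from $\pa_x W_{\alpha,B}$. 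I would dispose of this new term by substituting $\eqref{NSP.pt}_{2}$ for $\pa_t\psi$, using $W_{\alpha-1,B}\le W_{\alpha,B}$ on $\R_{+}$ and the smallness of $B=\tilde{\delta}A$, so that it is absorbed into $\eta\|\pa^{2}_x\psi\|_{\alpha,B}^{2}+C_{\eta}\|\pa_x[\varphi,\psi]\|_{\alpha,B}^{2}$; the weighted $J_{17}$--$J_{20}$ are estimated just as in Lemma~\ref{lem.V1bnmvhj} by Cauchy-Schwarz with $0<\eta<1$, Sobolev's inequality, Lemma~\ref{lem.V} and \eqref{inequality21g6}. Choosing $\eta$, $N_{2}(T)$, $\tilde{\delta}$ small then yields the weighted counterpart of \eqref{2.eng2}. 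The equation $\eqref{NSP.pt}_{3}$ is handled identically by testing against $-W_{\alpha,B}\pa^{2}_x\zeta/\rho$: the extra term $\mu\int_{\R_{+}}W_{\alpha,B}\zeta\,\pa^{2}_x\zeta\,\rho^{-1}dx$ is split by Cauchy-Schwarz into $\eta\|\pa^{2}_x\zeta\|_{\alpha,B}^{2}+C_{\eta}\|\zeta\|_{\alpha,B}^{2}$, and the term carrying $\pa_x\tilde{\omega}=r_{1}\omega_{b}e^{r_{1}x}$ is treated exactly as the term $K_{1}$ in the proof of Lemma~\ref{lem.V1}, using that $W_{\alpha,B}^{2}e^{-\sigma x}$ is bounded and the Poincar\'{e}-type inequality \eqref{p.ine} (recall $\psi(0,t)=0$); this gives the weighted counterpart of \eqref{2.eng2asx}.

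Summing the two resulting differential inequalities, multiplying by $(1+t)^{\xi}$ and integrating over $[0,t]$, the right-hand side contributions $\int_{0}^{t}(1+\tau)^{\xi}(\|\pa_x[\varphi,\psi,\zeta]\|_{\alpha,B}^{2}+\|\zeta\|_{\alpha,B}^{2}+\varphi^{2}(0,\tau))\,d\tau$ are absorbed by \eqref{1mnb} and \eqref{2mvbc}, the differentiation of the time weight supplies the $\xi\int_{0}^{t}(1+\tau)^{\xi-1}(\cdots)\,d\tau$ term present in \eqref{3mcd}, and the data terms combine into $\|[\varphi_{0},\psi_{0},\zeta_{0}]\|_{\vartheta,B}^{2}+\|\pa_x[\varphi_{0},\psi_{0},\zeta_{0}]\|_{\vartheta,B}^{2}$, which is exactly \eqref{3mcd}. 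I expect the main obstacle to be the weight bookkeeping: each integration by parts that shifts a derivative off $\psi$ or $\zeta$ generates an $\alpha B\,W_{\alpha-1,B}$ factor, and one must verify that, since $B=\tilde{\delta}A$ is small and $W_{\alpha-1,B}\le W_{\alpha,B}$, every such contribution is either of order $\tilde{\delta}$ --- hence absorbed by $N_{2}(T)+\tilde{\delta}\le\varepsilon_{2}$ --- or of the lower-order types $\|\pa_x[\varphi,\psi]\|_{\alpha,B}^{2}$, $\|\zeta\|_{\alpha,B}^{2}$, $\varphi^{2}(0,t)$ already controlled by Lemmas~\ref{lem.V1} and~\ref{lem.V12}; in particular no term of the borderline type $\tilde{\delta}^{2}\|[\varphi,\psi]\|_{\alpha-2,B}^{2}$ may appear without an accompanying small factor, since only \eqref{1mnb} provides such a bound.
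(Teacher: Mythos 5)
Your proposal is correct and follows essentially the same route as the paper: multiply $\eqref{NSP.pt}_{2}$, $\eqref{NSP.pt}_{3}$ by $-W_{\alpha,B}\pa_x^{2}\psi/\rho$, $-W_{\alpha,B}\pa_x^{2}\zeta/\rho$, integrate by parts carrying the weight, handle the new commutator terms $\int\pa_x(W_{\alpha,B})\pa_t\psi\,\pa_x\psi\,dx$ and $\int\pa_x(W_{\alpha,B})\pa_t\zeta\,\pa_x\zeta\,dx$ by substituting the equations and using $B=\tilde\delta A$ small, then sum, weight in time by $(1+t)^{\xi}$, integrate, and close with Lemmas~\ref{lem.V1} and \ref{lem.V12}. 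The only minor divergence is cosmetic: for the $\pa_x\tilde\omega$ term the paper does not invoke the Poincar\'e inequality but applies a weighted Cauchy--Schwarz and the bound $(1+Bx)^{\alpha}e^{-2\sigma x}\le C(1+Bx)^{\alpha-2}$ to land on $\eta\|\pa_x^{2}\zeta\|_{\alpha,B}^{2}+C_{\eta}\tilde\delta^{2}\|[\varphi,\psi]\|_{\alpha-2,B}^{2}$, which is then absorbed by the $\tilde\delta^{2}\|[\varphi,\psi]\|_{\alpha-2,B}^{2}$ dissipation coming from \eqref{1mnb} when the three lemmas are combined in the proof of Proposition~\ref{priori.estfvb}; both treatments work.
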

\begin{proof}
Multiplying $\eqref{NSP.pt}_{2}$ by $-W_{\alpha,B}\frac{\pa^{2}_x\psi}{\rho}$,
and then integrating the resulting equations over $\R_{+}$, one has
\begin{equation}\label{2.eng1xs}
\begin{aligned}[b]
&\frac{1}{2}\frac{d}{dt}\int_{\R_{+}}W_{\alpha,B}(\pa_x\psi)^{2}dx
+\lambda\int_{\R_{+}}W_{\alpha,B}\frac{(\pa^{2}_x\psi)^{2}}{\rho}dx
\\
\qquad=&\underbrace{-\int_{\R_{+}}\pa_x(W_{\alpha,B})\pa_t\psi\pa_x\psi dx}_{K_{17}}
\underbrace{\int_{\R_{+}}W_{\alpha,B}\frac{\pa_x
[p(\rho)-p(\tilde{\rho})]}{\rho}\pa^{2}_x\psi
dx}_{K_{18}} \underbrace{+\int_{\R_{+}}W_{\alpha,B}u\pa_x\psi\pa^{2}_x\psi
dx}_{K_{19}}\\&\underbrace{+\int_{\R_{+}}W_{\alpha,B}\frac{\tilde{u}\pa_x\tilde{u}}{\rho}\varphi\pa^{2}_x\psi
dx}_{K_{20}}\underbrace{+\int_{\R_{+}}W_{\alpha,B}\pa_x\tilde{u}\psi\pa^{2}_x\psi
dx}_{K_{21}}.
\end{aligned}
\end{equation}
We utilize Cauchy-Schwarz's inequality with $0<\eta<1$ to address the following estimates:
\begin{equation*}
\begin{split}
|K_{17}|\leq C\tilde{\delta}\|\pa_x[\varphi,\psi,\pa_x\psi]\|_{\alpha,B}^{2}+C\tilde{\delta}^{2}\|[\varphi,\psi]\|_{\alpha-2,B}^{2},
\end{split}
\end{equation*}
\begin{equation*}
\begin{split}
|K_{18}|+|K_{19}|\leq\eta\|\pa^{2}_x\psi\|_{\alpha,B}^{2}+C_{\eta}\|\pa_x[\varphi,\psi]\|_{\alpha,B}^{2},
\end{split}
\end{equation*}
\begin{equation*}
\begin{split}
|K_{20}|+|K_{21}|\leq\eta\|\pa^{2}_x\psi\|_{\alpha,B}^{2}+C_{\eta}\tilde{\delta}^{2}\|[\varphi,\psi]\|_{\alpha-2,B}^{2}.
\end{split}
\end{equation*}
Substituting the above estimates for $K_{l}$ $(17\leq l\leq 21)$
into \eqref{2.eng1xs} and taking $\eta$  small enough, one has
\begin{equation}\label{2.eng2asd}
\begin{aligned}[b]
&\frac{1}{2}\frac{d}{dt}\int_{\R_{+}}W_{\alpha,B}(\pa_x\psi)^{2}dx+\|\partial_{x}^{2}\psi\|_{\alpha,B}^{2}
\leq C\|\partial_{x}[\varphi,\psi]\|_{\alpha,B}^{2} +C\tilde{\delta}^{2}\|[\varphi,\psi]\|_{\alpha-2,B}^{2}.
\end{aligned}
\end{equation}

Multiplying $\eqref{NSP.pt}_{3}$ by $-W_{\alpha,B}\frac{\pa^{2}_x\zeta}{\rho}$,
and integrating the resulting equality over $\R_{+}$, we obtain
\begin{equation}\label{sum.ip5}
\begin{split}
&\frac{1}{2}\frac{d}{dt}\int_{\R_{+}}W_{\alpha,B}(\pa_x\zeta)^{2}dx+\nu\int_{\R_{+}}W_{\alpha,B}\frac{(\pa^{2}_x\zeta)^{2}}{\rho}dx
\\=&\underbrace{-\int_{\R_{+}}\pa_x(W_{\alpha,B})\pa_t\zeta\pa_x\zeta dx}_{K_{22}}\underbrace{\int_{\R_{+}}W_{\alpha,B}u\pa_x\zeta\pa^{2}_x\zeta
dx}_{K_{23}}\\&\underbrace{+\mu\int_{\R_{+}}W_{\alpha,B}\frac{\zeta}{\rho}\pa^{2}_x\zeta
dx}_{K_{24}}\underbrace{+\int_{\R_{+}}W_{\alpha,B}\frac{\pa_x\tilde{\omega}}{\rho}(\tilde{u}\varphi+\rho\psi)\pa^{2}_x\zeta dx}_{K_{25}}.
\end{split}
\end{equation}
To obtain the estimates for $K_{22}$-$K_{25}$, we use
Cauchy-Schwarz's inequality with $0<\eta<1$ to get
\begin{equation*}\label{3.3}
\begin{split}
|K_{22}| \leq
C\tilde{\delta}\|\pa_x[\zeta,\pa_x\zeta]\|_{\alpha,B}^{2}
+C\tilde{\delta}\|\zeta\|_{\alpha,B}^{2}+C\tilde{\delta}^{2}\|[\varphi,\psi]\|_{\alpha-2,B}^{2},
\end{split}
\end{equation*}
\begin{equation*}\label{3.3}
\begin{split}
|K_{23}|+|K_{24}| \leq
\eta\|\pa^{2}_x\zeta\|_{\alpha,B}^{2}+C_{\eta}\|[\zeta,\pa_x\zeta]\|_{\alpha,B}^{2},
\end{split}
\end{equation*}
\begin{equation*}
\begin{split}
|K_{25}|\leq\eta\|\pa^{2}_x\zeta\|_{\alpha,B}^{2}+C_{\eta}\tilde{\delta}^{2}\|[\varphi,\psi]\|_{\alpha-2,B}^{2}.
\end{split}
\end{equation*}

Then we have
\begin{equation}\label{2.eng2asxd}
\begin{aligned}[b]
&\frac{1}{2}\frac{d}{dt}\int_{\R_{+}}W_{\alpha,B}(\pa_x\zeta)^{2}dx+\|\partial_{x}^{2}\zeta\|_{\alpha,B}^{2}
\leq C\|[\zeta,\pa_x\zeta]\|_{\alpha,B}^{2}+C\tilde{\delta}\|\zeta\|_{\alpha,B}^{2}+C\tilde{\delta}^{2}\|[\varphi,\psi]\|_{\alpha-2,B}^{2}
\end{aligned}
\end{equation}
if $\eta$ is small enough.

The summation of $\eqref{2.eng2asd}$ and $\eqref{2.eng2asxd}$, and multiplying the resulting inequality
 by $(1+t)^{\xi}$, then integrating the resulting inequality in $\tau$
over $[0,t]$ for any $0\leq t \leq T$, using \eqref{1mnb} and \eqref{2mvbc},
   then \eqref{3mcd} holds for $\alpha \in (0,\vartheta]$.
 Then we can prove \eqref{3mcd} holds for $\alpha=0$ with the help of Lemma
\ref{lem.V} and Sobolev's inequality.
\end{proof}

\textbf{Proof of Proposition \ref{priori.estfvb}}\ \
 Now, following the three steps above, we are ready to
prove Proposition
 $\ref{priori.estfvb}$.  Summing up the estimates $\eqref{1mnb}$,
$\eqref{2mvbc}$ and $\eqref{3mcd}$, and taking $\tilde{\delta}$ and
$N_{1}(T)$ suitably small, we have
\begin{eqnarray}\label{4mvb}
\begin{aligned}[b]
&(1+
t)^{\xi}\|[\varphi,\psi,\zeta](t)\|_{\alpha,B,1}^{2}
+\int_{0}^{t}(1+ \tau)^{\xi}
\left(\tilde{\delta}^{2}\|[\varphi,\psi](\tau)\|_{\alpha-2,B}^{2}
+\|[\zeta,\partial_{x}[\varphi,\psi,\zeta],\partial_{x}^{2}[\psi,\zeta]](\tau)\|_{\alpha,B}^{2}\right)d\tau\\
\leq &C\|[\varphi_{0},\psi_{0},\zeta_{0}]\|_{\vartheta,B,1}^{2}+\xi\int_{0}^{t}(1+
\tau)^{\xi-1}\|[\varphi,\psi,\zeta](\tau)\|_{\alpha,B,1}^{2}d\tau,
\end{aligned}
\end{eqnarray}
where $C$ is a positive constant independent of $T$, $\alpha$, $\beta$,
$N_{1}(T)$ and $\tilde{\delta}$. Hence, similarly as in
\cite{S. Kawashima, M. Nishikawasll}, applying an induction to
$\eqref{4mvb}$ gives desired estimate $\eqref{3.3mnh}$.

\medskip
\noindent {\bf Acknowledgements:}
 The author was supported by the National Natural Science Foundation of China(Grant No. 11601165), the Natural Science Foundation of Fujian Province of China(Grant No. 2017J05007), and  the Scientific Research Funds of Huaqiao University (Grant No. 15BS201).

\bigbreak


\begin{thebibliography}{99}


\bibitem{Partial}
M.T. Chen, Global strong solutions for the viscous, micropolar,
compressible flow, J. Partial Differ. Equ., 24(2011), 158-164.



\bibitem{M. Chen}
M.T. Chen, Blowup criterion for viscous, compressible micropolar
fluids with vacuum,  Nonlinear Anal., Real World Appl., 13(2012),
850-859.

\bibitem{J. Zhang}
M.T. Chen, B. Huang, J.W. Zhang, Blowup criterion for the
three-dimensional equations of compressible viscous micropolar
fluids with vacuum, Nonlinear Anal., 79(2013), 1-11.


\bibitem{Commun.}
M.T. Chen, X.Y. Xu, J.W. Zhang, Global weak solutions of 3D
compressible micropolar fluids with discontinuous initial data and
vacuum, Commun. Math. Sci., 13(2015), 225-247.

\bibitem{chen1}
Q. L. Chen, C. X. Miao, Global well-posedness for the micropolar
fluid system in critical Besov spaces, J. Differential Equations, 252(2012), 2698-2724.


\bibitem{Cui1}
H.B. Cui, H.Y. Yin, Stability of the composite wave for the inflow problem on the micropolar fluid model, Commun. Pure Appl. Anal., 16(2017), 1265-1292.

\bibitem{Cui2}
H.B. Cui, H.Y. Yin, Stationary solutions to the one-dimensional micropolar fluid model in a half line: existence, stability and convergence rate, J. Math. Anal. Appl., 449(2017), 464-489.

\bibitem{Dong}
B.Q. Dong, J.N. Li, J.H. Wu, Global well-posedness and large-time decay for the 2D micropolar
equations, J. Differential Equations, 262(2017), 2488-3523.


\bibitem{spherical22}
I. Dra$\check{z}$i$\acute{c}$, N. Mujakovi$\acute{c}$, 3-D flow of a compressible viscous micropolar fluid with spherical symmetry: large time behavior of the solution, J. Math. Anal. Appl., 431(2015), 545-568.


\bibitem{spherical}
 I. Dra$\check{z}$i$\acute{c}$,  L. Sim$\check{c}$i$\acute{c}$,  N. Mujakovi$\acute{c}$, 3-D flow of a compressible viscous micropolar fluid with spherical symmetry: regularity of the solution, J. Math. Anal. Appl., 438(2016), 162-183.

\bibitem{Duancal11} R. Duan, Global solutions for a one-dimensional compressible micropolar
fluid model with zero
heat conductivity, J. Math. Anal. Appl., 463(2018), 477-495.

\bibitem{Duancal}
R. Duan, Global strong solution for initial-boundary value problem of one-dimensional compressible micropolar fluids with density dependent viscosity and temperature dependent heat conductivity, Nonlinear Anal. Real World Appl., 42(2018), 71-92.



\bibitem{A. C. Eringen} A.C. Eringen, Theory of micropolar fluids, J. Math. Mech.,
16(1966), 1-18.

\bibitem{A. C. Eringen2}
A.C. Erigen, Microcontinuum Field Theories: I. Foundations and solids, Springer. New York.,
1999.



\bibitem{HQad} F.M. Huang, X.H. Qin, Stability of boundary layer and rarefaction wave to an outflow
problem for compressible Navier-Stokes equations under large
perturbation, J. Differential Equations, 246(2009), 4077-4096.


\bibitem{L. Huang1} L. Huang, D.Y. Nie, Exponential stability for a one-dimensional compressible viscous micropolar fluid, Math. Methods Appl. Sci., 38(2015), 5197-5206.

\bibitem{JiJing}
J. Jin, R. Duan,  Stability of rarefaction waves for 1-D compressible viscous micropolar fluid model, J. Math. Anal. Appl., 450(2017), 1123-1143.

\bibitem{S. Kawashima} S. Kawashima, A. Matsumura, Asymptotic stability of traveling wave solutions of systems for
one-dimensional gas motion, Comm. Math. Phys., 101(1985), 97-127.



\bibitem{Math.} S. Kawashima, T. Nakamura, S. Nishibata, P.C. Zhu, Stationary waves
to viscous heat-conductive gases in half space: existence, stability
and convergence rate, Math. Models Methods Appl. Sci., 20(2010),
2201-2035.


\bibitem{KNZhd} S. Kawashima, S. Nishibata, P.C. Zhu, Asymptotic stability of the stationary solution to the compressible
Navier-Stokes equations in the half space, Comm. Math. Phys.,
240(2003), 483-500.

\bibitem{Qcontac}
Q.Q. Liu, H.Y. Yin, Stability of contact discontinuity for 1-D compressible viscous micropolar fluid model, Nonlinear Anal.: Theory, Methods Appl., 149(2017), 41-55.


\bibitem{Q.Q. Liu}
Q.Q. Liu, P.X. Zhang, Optimal time decay of the compressible
micropolar fluids, J. Differential Equations, 260(2016), 7634-7661.

\bibitem{ZhangLiu}
Q.Q. Liu, P.X. Zhang, Long-time behavior of solution to the compressible micropolar fluids with external force, Nonlinear Anal. Real World Appl., 40(2018), 361-376.


\bibitem{G. Lukaszewicz} G. Lukaszewicz, Micropolar Fluids. Theory and Applications, Modeling
and Simulation in Science, Engineering and Technology.
Birkh$\ddot{a}$user, Baston, 1999.

\bibitem{Matsumura1} A. Matsumura, Inflow and outflow problems in the half space for a one-dimensional
isentropic model system of compressible viscous gas, Methods Appl. Anal., 8(2001)
645-666.


\bibitem{HfklgQadcd} A. Matsumura, M. Mei,
Convergence to travelling fronts of solutions of the p-system with
viscosity in the presence of a boundary, Arch. Ration. Mech. Anal.,
146(1999), 1-22.

\bibitem{HfgQad} A. Matsumura, K. Nishihara, Large-time behaviors of
solutions to an inflow problem in the half space for a
one-dimensional system of compressible viscous gas, Comm. Math.
Phys., 222(2001), 449-474.

\bibitem{N. Mujakovi} N. Mujakovi$\acute{c}$, One-dimensional flow of a compressible viscous micropolar fluid:
 a local existence theorem, Glas. Mat., 33(1998), 71-91.

\bibitem{global}
N. Mujakovi$\acute{c}$, One-dimensional flow of a compressible
viscous micropolar fluid: a global existence theorem, Glas. Mat.,
33(1998), 199-208.


\bibitem{regularity}
N. Mujakovi$\acute{c}$, One-dimensional flow of a compressible
viscous micropolar fluid: regularity of the solution, Rad. Mat.,
10(2001), 181-193.


\bibitem{estimates05}
N. Mujakovi$\acute{c}$, Global in time estimates for one-dimensional
compressible viscous micropolar fluid model, Glas. Mat. Ser.III,
40(2005), 103-120.


\bibitem{Proceedings}
N. Mujakovi$\acute{c}$, One-dimensional flow of a compressible
viscous micropolar fluid: stabilization of the solution, in: Z.
Drma$\check{c}$, M. Maru$\check{s}i\acute{c}$, Z. Tutek (Eds.),
Proceedings of the Conference on Applied Mathematics and Scientific
Computing, Springer, Netherlands, 2005, pp. 253-262.


\bibitem{local}
 N. Mujakovi$\acute{c}$,  Nonhomogeneous boundary value problem for
one-dimensional compressible viscous micropolar fluid model: a local
existence theorem, Ann. Univ. Ferrara Sez. VII Sci. Mat., 53(2007),
 361-379.

\bibitem{boundary}
 N. Mujakovi$\acute{c}$, Nonhomogeneous boundary value problem for one-dimensional
compressible viscous micropolar fluid model: regularity of the
solution, Bound. Value Probl., 2008, Article ID 189748 (2008).


\bibitem{Math. Inequal}
N. Mujakovi$\acute{c}$, Nonhomogeneous boundary value problem for
one-dimensional compressible viscous micropolar fluid model: a
global existence theorem, Math. Inequal. Appl., 12(2009), 651-662.

\bibitem{Cauchy}
N. Mujakovi$\acute{c}$, One-dimensional compressible viscous
micropolar fluid model: stabilization of the solution for the Cauchy
problem, Bound. Value Probl., 2010, Article ID 796065 (2010).

\bibitem{Nermina}
N. Mujakovi$\acute{c}$, The existence of a global solution for one dimensional compressible viscous micropolar fluid with non-homogeneous boundary conditions for temperature, Nonlinear Anal. Real World Appl., 19(2014), 19-30.

\bibitem{NNY}
T. Nakamura,  S. Nishibata, T. Yuge, Convergence rate of solutions
toward stationary solutions to the compressible Navier-Stokes
equation in a half line, J. Differential Equations, 241(2007),
94-111.

\bibitem{M. Nishikawnj} T. Nakamura, S. Nishibata, Stationary wave associated with an inflow problem in the half
line for viscous heat-conductive gas, J. Hyperbolic Differ. Equ.,
8(2011), 651-670.


\bibitem{M. Nishikawasll} M. Nishikawa, Convergence rate to the traveling wave for viscous
conservation laws, Funkcial. Ekvac., 41(1998), 107-132.

\bibitem{Nowakowsk}
B. Nowakowski, Large time existence of strong solutions to micropolar equations in cylindrical
domains, Nonlinear Anal. Real World Appl., 14(2013), 635-660.


\bibitem{stabilization}
Y. Qin, T. Wang, G. Hu, The Cauchy problem for a 1D compressible
viscous micropolar fluid model: analysis of the stabilization and
the regularity, Nonlinear Anal., Real World Appl., 13(2012),
1010-1029.

\bibitem{wuzhigang}
Z.G. Wu, W.K. Wang, The pointwise estimates of diffusion wave of the compressible micropolar fluids, J. Differential Equations, 265(2018), 2544-2576.

\bibitem{stabstationary}
H.Y. Yin, Stability of stationary solutions for inflow problem on the micropolar fluid model, Z. Angew. Math. Phys., 68(2017), 44.

\end{thebibliography}
\end{document}